\newcommand{\al}{\alpha}
\newcommand{\ep}{\epsilon}
\newcommand{\vphi}{\varphi}
\newtheorem{theorem}{Theorem}[section]
\newtheorem{lemma}[theorem]{Lemma}
\newtheorem{claim}[theorem]{Claim}
\newtheorem{proposition}[theorem]{Proposition}
\newtheorem{corollary}[theorem]{Corollary}
\theoremstyle{definition}
\newtheorem{definition}[theorem]{Definition}
\theoremstyle{remark}
\newtheorem{remark}[theorem]{Remark}
\numberwithin{equation}{section}
\begin{document}

\title[An integro-PDE model]{An integro-PDE model \\ for evolution of random dispersal}


\author{King-Yeung Lam}
\address{Department of Mathematics, Columbus, OH, United States.}
\curraddr{}
\email{lam.184@math.ohio-state.edu}
\thanks{KYL and YL are partially supported by NSF grant DMS-1411476}

\author{Yuan Lou}
\address{Institute for Mathematical Sciences, Renmin University of China, and Department of Mathematics, Ohio State University}
\curraddr{}
\email{lou@math.ohio-state.edu}

\subjclass[2010]{35K57, 92D15,  92D25}

\date{\today}

\dedicatory{}

\begin{abstract}
We consider an  integro-PDE model for a population structured by the spatial variables and a trait variable which is the diffusion rate. Competition for resource is local in spatial variables, but nonlocal in the trait variable. We focus on the asymptotic profile of positive steady state solutions. Our result shows that in the limit of small mutation rate, the solution remains regular in the spatial variables and yet concentrates in the trait variable and forms a Dirac mass supported at the lowest diffusion rate. Hastings
and Dockery et al. showed that for two competing species  in spatially heterogeneous
but temporally constant environment, the slower diffuser always prevails, if all other things are held equal \cite{DHMP, Hastings}.
Our result suggests that their findings may well hold for arbitrarily many or even a continuum of  traits.

\end{abstract}

\maketitle

\section{Introduction}\label{sec:1}

In this paper, we focus on the concentration phenomena in a mutation-selection model for the evolution of random dispersal in a bounded, spatially heterogeneous and temporally constant environment. This model concerns a population structured simultaneously by a spatial variable $x \in D$ and the motility trait $\alpha \in \mathcal{A}$ of the species. Here $D$ is a bounded open domain in $\mathbb{R}^N$,
and $\mathcal{A} = [\underline\alpha, \overline\alpha]$, with $\overline{\alpha}>\underline\alpha>0$, denotes a bounded set of phenotypic traits.  We assume that the spatial diffusion rate is parameterized by the variable $\alpha$, while  mutation is modeled by a diffusion process with constant rate $\epsilon^2>0$. Each individual is in competition for resources with all other individuals at the same spatial location. Denoting by $u(t,x,\alpha)$ the population density of the species with trait $\alpha \in \mathcal{A}$ at location $x\in D$ and time $t>0$, the model is given as
\begin{equation}\label{eq:mainp}
\left\{
\begin{array}{ll}
u_t = \alpha {\Delta} u  + \left[m(x) - \hat u (x,t))\right] u + \epsilon^2 u_{\alpha\alpha}, &x\in D, \alpha \in (\underline\alpha, \bar\alpha), t>0,\\
\frac{\partial u}{\partial n} = 0, &x\in \partial D, \alpha \in (\underline\alpha, \bar\alpha), t>0,\\
u_\alpha = 0, &x \in D, \alpha \in \{\underline\alpha,\overline\alpha\}, t>0,\\
u(0,x,\alpha) = u_0(x,\alpha), & x\in D, \alpha \in  (\underline\alpha, \bar\alpha).
\end{array}
\right.
\end{equation}
Here $\Delta = \sum_{i=1}^N \frac{\partial^2}{\partial x_i^2}$ denotes the Laplace operator in the spatial variables, $$
\hat{u}(x,t):=\int_{\underline\alpha}^{\overline\alpha} u(t,x,\alpha)\, d\alpha,
$$
$n$ denotes the outward unit normal vector on the boundary $\partial D$ of the spatial domain $D$, and $\frac{\partial}{\partial n} = n \cdot \nabla$. The function $m(x)$ represents the quality of the habitat, which is assumed to be non-constant in $x$ to reflect that the environment is spatially heterogeneous but temporally constant.

The model \eqref{eq:mainp} can be viewed as a continuum (in trait) version of the following mutation-selection model considered by Dockery et al. \cite{DHMP}, concerning the competition of $k$ species with different dispersal rates but otherwise identical:
\begin{equation}\label{eq:DHMP2}
\left\{
\begin{array}{ll}
\frac{\partial}{\partial t} u_i = \alpha_i \Delta u_i + \left[m(x) - \sum_{j=1}^k u_j\right] u_i + \epsilon^2 \sum_{j=1}^k M_{ij} u_j  \\
\qquad \qquad\qquad \qquad\qquad \qquad\qquad \qquad \text{ in }D \times(0,\infty), i=1,...,k,\\
\frac{\partial }{\partial n}u_i = 0 \quad \qquad \qquad\qquad \qquad \qquad \quad \text{ on }\partial D \times(0,\infty), i=1,...,k,\\
u_i(x,0) = u_{i,0}(x)\qquad \qquad\quad\,\,\,\, \quad \quad \quad \text{ in } D, i = 1,...,k,
\end{array}
\right.
\end{equation}
where $0<\alpha_1 < \alpha_2 < ... < \alpha_k$ are constants, $m(x) \in C^2(\overline{D})$ is non-constant, $M_{ij}$ is an irreducible real $k\times k$ matrix that  models the mutation process so that $M_{ii} <0$ for all $i$, and $M_{ij} \geq 0$ for $i\neq j$ and $\epsilon^2\geq 0$ is the mutation rate.

Model \eqref{eq:DHMP2} was introduced to address the
%
%
 question of evolution of random dispersal. In the case when  there is no mutation, i.e. $\epsilon=0$,
 this question was
  considered in \cite{Hastings}, where it was shown that in a competition model of two species with different diffusion rates but otherwise identical, a rare competitor can invade the resident species if and only if
   the rare species is the slower diffuser.  Dockery et al. \cite{DHMP}
generalized the work of
Hastings \cite{Hastings} to $k$ species situation, and proved that no two species can coexist at equilibrium, i.e.
  the set of non-trivial, non-negative steady states of the system \eqref{eq:DHMP2} is given by
$$
\{(\theta_{\alpha_1}, 0,..., 0), (0,\theta_{\alpha_2},0,...,0), ... , (0,..., \theta_{\alpha_k})\},
$$
where $\theta_\alpha$ is the unique positive solution of
\begin{equation*}
\alpha\Delta\theta+\theta(m-\theta)=0\quad \mbox{in}\ D,
\quad \quad \tfrac{\partial\theta}{\partial n}=0 \quad \mbox{on}\ \partial D.
\end{equation*}
Moreover,  among the non-trivial steady states, only $(\theta_{\alpha_1}, ...,0)$, the steady state where the slowest diffuser survives, is stable and the rest of the steady states are all  unstable. Furthermore, when $k=2$,  the steady state $(\theta_{\alpha_1},0)$ is globally asymptotically stable among all non-negative, non-trivial solutions. Whether such a result holds for three or more species remains an interesting and important open question.

Dockery et al. \cite{DHMP} further inquired the effect of small mutation. More precisely,  when $0<\epsilon \ll 1$,
it is shown that \eqref{eq:DHMP2} has a unique steady
state $\tilde{U}= (\tilde u_1, \tilde u_2, ..., \tilde u_N)$ in the space of non-trivial,
non-negative functions,
 such that $\tilde{u}_i >0$ for all $i$, and $\tilde{U} \to (\theta_{\alpha_1},0, ...,0)$ as $\epsilon \to 0$;
  i.e. the system \eqref{eq:DHMP2} equilibrates only when the slowest species is dominant and all other species remain at low densities. 

%

It is natural, then, to inquire if the situation in the discrete (in trait) framework carries over to the continuum framework.
The aim of this paper is to study the asymptotic behavior of steady state(s) of \eqref{eq:mainp}.
%
Let $u_\epsilon$ be any positive steady state of \eqref{eq:mainp}, we will show that, as $\epsilon \to 0$,
$$
u_\epsilon(x,\alpha) \to \delta(\alpha - \underline\alpha) \theta_{\underline\alpha}(x),
$$
i.e. $u_\epsilon$ converges to a Dirac mass supported at the lowest possible trait value $\underline\alpha$.
See Theorem \ref{thm} for precise descriptions of our main results.


Mutation-selection models for a continuum of trait values have  been studied extensively, when the phenotypic trait is associated only with growth advantages \cite{BRV, burger,calsina,DJMP,LMP, MW,PB}. See also \cite{JR} for a pure selection model. The consideration of a spatial trait is more recent \cite{ACR,ADLP, BM,  MiP}.


System \eqref{eq:mainp} is also considered in an unbounded spatial domain $x \in \mathbb{R}$. A formal argument concerning the existence of an ``accelerating wave" is presented in \cite{BCMMPRV}, which provides a theoretical explanation of the accelerating invasion front of cane toads in Australia \cite{toadbio}. Rigorous results are obtained when $\alpha \in \mathcal{A} = [\underline\alpha , \overline\alpha]$ more recently in \cite{BC, Turanova2014}. It can be summarized that the highest diffusion rate is selected when the underlying spatial domain is unbounded, which stands in contrast to the case of bounded spatial domains we consider in this paper, where the lowest possible diffusion rate is selected.

The rest of the paper is organized as follows: The main results are stated in Section \ref{sec:2}. Section \ref{sec:3} concerns various estimates on steady states of \eqref{eq:mainp}. In Section \ref{sec:4} we introduce an auxiliary eigenvalue problem and  a transformed problem of \eqref{eq:main}. The limit of $\hat{u}_\epsilon$ is determined in Section \ref{subsect:hatu}.  In Section \ref{sec:6}, we analyze the qualitative properties of solutions  to the transformed problem. The proof of our main result is given in Section \ref{sec:7}. Finally, the Appendices \ref{sec:A} to \ref{sec:C} establish the existence results, the smooth dependence of principal eigenvalue on coefficients as well as a Liouville-type results concerning positive harmonic functions on cylinder domains.


\section{Main Results}\label{sec:2}


In this paper, we consider the asymptotic behavior of positive steady states of \eqref{eq:mainp}, denoted by $u_\epsilon$. That is, $u_\epsilon$ satisfies the following  mutation-selection equation of a randomly diffusing population:
\begin{equation}\label{eq:main}
\left\{
\begin{array}{ll}
\alpha {\Delta} u_\epsilon + \epsilon^2 (u_\epsilon)_{\alpha\alpha} + \left[m(x) - \hat u_\epsilon (x)\right] u_\epsilon = 0&\text{ in }\Omega:= D \times (\underline\alpha, \bar\alpha),\\
\frac{\partial u_\epsilon}{\partial n} = 0 &\text{ on }\partial D \times (\underline\alpha , \overline\alpha),\\
(u_\epsilon)_\alpha = 0 &\text{ in }D \times \{\underline\alpha,\overline\alpha\},
\end{array}
\right.
\end{equation}
where
\begin{equation}\label{eq:u-epsilon-a}
\hat{u}_\epsilon(x) = \int_{\underline\alpha}^{\overline\alpha} u_\epsilon(x,\alpha)\,d\alpha.
\end{equation}

Throughout this paper, we assume
$$
\hspace{-1cm}\textup{(A)} \quad  m(x)\text{ is a non-constant function in }C({\bar D})\text{ such that }\int_D m(x)\, dx >0.
$$

The existence of positive solutions to \eqref{eq:main} can be stated
as follows:
\begin{theorem}\label{thm:existence}
Suppose {\rm (A)} holds, then \eqref{eq:main} has at least one positive solution for all $\epsilon>0$.
\end{theorem}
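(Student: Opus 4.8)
The plan is to obtain a positive solution of \eqref{eq:main} via a fixed-point / continuation argument, combined with a priori bounds that prevent the solution branch from collapsing to the trivial state. Since \eqref{eq:main} is a steady-state system with a nonlocal, nonlinear term $\hat u_\epsilon$, the natural device is to freeze the nonlocal coefficient: for a given nonnegative $w \in C(\bar D)$, consider the linear eigenvalue problem
\begin{equation*}
\alpha \Delta \vphi + \epsilon^2 \vphi_{\alpha\alpha} + [m(x) - w(x)]\vphi = \lambda \vphi \text{ in } \Omega,
\quad \tfrac{\partial \vphi}{\partial n} = 0 \text{ on } \partial D \times (\underline\al,\bar\al),
\quad \vphi_\al = 0 \text{ on } D\times\{\underline\al,\bar\al\},
\end{equation*}
and let $\lambda_1(w)$ be its principal eigenvalue (with positive principal eigenfunction). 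This is a self-adjoint operator on $\Omega$ with Neumann conditions on all of $\partial\Omega$, so standard elliptic eigenvalue theory applies. Assumption (A), which says $\int_D m\,dx > 0$, guarantees $\lambda_1(0) > 0$ by testing with the constant function, while $\lambda_1(w) \to -\infty$ as $w \to +\infty$; monotonicity and continuity of $\lambda_1$ in $w$ then set the stage for the existence argument.

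First I would set up the map. Given the positive steady state we seek, rescale or normalize: for each $s > 0$ consider the problem with $w = s$ replaced by $\hat u$, and define a fixed-point operator on a suitable cone in $C(\bar D)$ (or $C(\bar\Omega)$), $\mathcal{F}: v \mapsto$ (the $\al$-integral of the solution $u$ of the linear equation $\al\Delta u + \epsilon^2 u_{\al\al} + [m - \hat v]u = 0$ normalized appropriately, e.g. by the principal eigenfunction). The cleanest route is probably a global bifurcation or sub-/supersolution argument: construct a supersolution from a large constant (using $m \le \|m\|_\infty$) and a subsolution from a small multiple of the principal eigenfunction of the operator $\al\Delta u + \epsilon^2 u_{\al\al} + m(x)u = \lambda_1(0) u$ when $\lambda_1(0) > 0$. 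Because the equation is cooperative in $u$ once $\hat u$ is frozen, and the reaction term $[m - \hat u]u$ is of logistic type (sublinear, decreasing in the nonlocal argument), a monotone iteration between the sub- and supersolution should converge to a positive solution. The a priori upper bound $\hat u_\epsilon(x) \le \|m\|_{\infty}$ (or $\sup_{\bar\Omega} u_\epsilon \le \|m\|_\infty$) follows from the maximum principle: at an interior maximum of $u_\epsilon$ in $\bar\Omega$, the second-derivative terms are $\le 0$, forcing $m(x) - \hat u_\epsilon(x) \ge 0$ there, and one bootstraps; this keeps the iteration in a compact order interval.

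The main obstacle I expect is ruling out degeneration to the trivial solution $u \equiv 0$ — i.e., showing the fixed point is genuinely positive — and handling the Neumann corner/edge regularity on the product domain $\Omega = D\times(\underline\al,\bar\al)$, which has edges where $\partial D \times \{\underline\al,\bar\al\}$ meets. For the first issue, the key is $\lambda_1(0) > 0$ from (A): if a candidate solution had $\hat u$ small, the frozen linear operator would still have positive principal eigenvalue, so the subsolution $\kappa\vphi_1$ with $\kappa$ small is a genuine (strict) subsolution and the monotone iteration cannot converge to zero; a Harnack / strong maximum principle argument on $\bar\Omega$ then upgrades nonnegativity to strict positivity. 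For the regularity issue, one works in $W^{2,p}$ away from the edges and uses that the Neumann data are homogeneous and the domain is a product, so reflection across $\al = \underline\al$ and $\al = \bar\al$ (and locally across $\partial D$) reduces everything to interior estimates; alternatively one simply notes that the existence and smooth-dependence results promised in the Appendices handle this, and cites them. I would close by verifying the solution satisfies all three boundary conditions in \eqref{eq:main} and invoking elliptic regularity for the claimed smoothness.
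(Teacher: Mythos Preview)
Your sketch has the right ingredients at the eigenvalue level (the sign of the principal eigenvalue under (A), the sub/supersolution candidates), but the mechanism you propose for actually producing a solution has a genuine gap. The heart of your argument is a monotone iteration between $\kappa\varphi_1$ and a large constant, justified by ``the equation is cooperative in $u$ once $\hat u$ is frozen.'' But freezing $\hat u = w$ turns \eqref{eq:main} into the \emph{linear homogeneous} equation $\alpha\Delta u + \epsilon^2 u_{\alpha\alpha} + [m - w]u = 0$, which has no nontrivial solution unless $0$ is its principal eigenvalue---so there is no iteration to run. If instead you keep the full nonlinearity and try the standard device $(-L + K)u_{n+1} = f(u_n) + K u_n$, you need $v \mapsto [m - \hat v]v + Kv$ to be order-preserving; the nonlocal competition term $-\hat v\,v$ is \emph{decreasing} in $v(\cdot,\alpha')$ for $\alpha' \neq \alpha$, and this defect is not absorbed by any choice of $K$. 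Your sub/supersolution pair is correct, but there is no comparison principle linking them. (Your maximum-principle upper bound is also incomplete: at a maximum of $u_\epsilon$ you only learn $\hat u_\epsilon(x_0) \le m(x_0)$ at that one point, not a global bound on $u_\epsilon$ or $\hat u_\epsilon$.)

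The paper sidesteps the missing monotonicity by a homotopy/degree argument: it embeds \eqref{eq:main} in the one-parameter family
\[
\alpha\Delta u + \epsilon^2 u_{\alpha\alpha} + \bigl[m - \tau\hat u - (1-\tau)u\bigr]u = 0,\qquad \tau\in[0,1],
\]
which at $\tau=0$ is the \emph{local} logistic equation on $\Omega$, with a unique linearly stable positive solution and hence nonzero degree. Uniform two-sided $L^1$ a priori bounds, independent of $\tau$, then carry the degree to $\tau=1$. Your alternative suggestions---global bifurcation, or a Schauder-type fixed point on $w \mapsto$ (suitably normalized principal eigenfunction for potential $m-w$)---could be made to work, but you would have to set them up carefully to replace the monotonicity you are implicitly assuming; as written, the proposal does not do this.
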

We postpone the proof of Theorem \ref{thm:existence} to Appendix \ref{sec:A}. For the rest of the paper we will focus on the asymptotic behavior of
positive solutions of \eqref{eq:main}  as $\epsilon \to 0.$
To this end, we define the following quantities:

\begin{definition}\label{def:2.2}
\begin{itemize}

\item[{\rm (i)}] Let $\theta_{\underline\alpha}(x)$ be the unique positive solution of
\begin{equation}\label{eq:theta}
\left\{
\begin{array}{ll}
\underline\alpha {\Delta} \theta + \theta(m(x) - \theta) = 0&\text{ in }D,\\
\frac{\partial\theta}{\partial n} =0 &\text{ on }\partial D.
\end{array}
\right.
\end{equation}

\item[{\rm (ii)}]
For each  $\alpha \in [\underline\alpha,\overline\alpha]$, we denote the principal eigenvalue and principal positive eigenfunction of the following problem  by $\sigma^*(\alpha)$ and $\psi^*(x,\alpha)$, respectively:
\begin{equation}\label{eq:psistar}
\left\{
\begin{array}{ll}
\alpha {\Delta} \psi + (m(x) - \theta_{\underline\alpha}(x))\psi +\sigma\psi = 0\quad \text{ in }D,\\
\frac{\partial\psi}{\partial n} =0 \quad \text{ on }\partial D, \quad \text{ and }\quad \int_D \psi^2\,dx =
\int_D \theta^2_{\underline\alpha}\,dx.
\end{array}
\right.
\end{equation}
(Note that by (i),  $\theta_{\underline\alpha}(x)$ is a positive eigenfunction for \eqref{eq:psistar} when $\alpha = \underline\alpha$. By uniqueness of the (normalized) principal eigenfunction, we have $\sigma^*(\underline\al) = 0$, and  $\psi^*(x,\underline\alpha) = \theta_{\underline\alpha}(x)$ for $x \in D$.)

\item[{\rm (iii)}] Denote by $\eta^*(s)$ the unique positive solution to
\begin{equation}\label{eq:etastar}
\left\{
\begin{array}{ll}
\eta'' + (a_0 -  a_1 s)\eta = 0\quad \text{ for }s>0,\\
\eta'(0) = 0 = \eta(+\infty) \quad \,\,\,\text{ and } \int_0^\infty \eta(s)\,ds = 1,
\end{array}
\right.
\end{equation}
where $a_0, a_1$ are positive constants determined by $a_1 = \frac{\partial \sigma^*}{\partial \alpha}(\underline\alpha)$ and $a_0 = (a_1)^{2/3} A_0$, where $A_0$ is  the absolute value of the first negative zero of the derivative of the Airy function.

\end{itemize}
\end{definition}

When $m(x) \equiv 1$, one can easily show that $u_\epsilon \equiv 1/(\overline\alpha - \underline\alpha)$, i.e. there is no selection in the trait variable. Our main result shows that the outcome changes drastically when $m(x)$ is non-constant. In fact, $u_\epsilon$ concentrates at the lowest value in the trait variable, as $\epsilon \to 0$. This phenomenon is also known as spatial sorting.



\begin{theorem}\label{thm}
Let  $u_\epsilon$ be any positive solution of \eqref{eq:main}. Then for all $\beta>0$,  there exists $C>0$ independent of $\epsilon>0$ such that
\begin{equation}\label{eq:thm1}
 u_\epsilon(x,\alpha) \leq C\epsilon^{-2/3}\exp\left( - \beta(\alpha - \underline\alpha)\epsilon^{-2/3} \right)
\end{equation}
in $\Omega = D \times(\underline\alpha,\overline\alpha)$. Moreover,  as $\epsilon \to 0$
\begin{equation}\label{eq:thm2}
\left\|
\epsilon^{2/3}u_\epsilon(x,\alpha) - \theta_{\underline\alpha}(x)\eta^*\left( \frac{\alpha - \underline\alpha}{\epsilon^{2/3}}\right)\right\|_{L^\infty(\Omega)} \to 0 
\end{equation}
where $\theta_{\underline\alpha}(x)$
and $\eta^*(s)$ are given as above.
In particular, we have
\begin{equation}\label{eq:thm2-aa}
\hat u_\epsilon(x) = \int_{\underline\alpha}^{\overline\alpha} u_\epsilon(x,\alpha)\,d\alpha \to \theta_{\underline\alpha}(x)\quad \text{ as }\epsilon \to 0.
\end{equation}
\end{theorem}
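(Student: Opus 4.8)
The plan is to treat \eqref{eq:main} as a singular perturbation problem and extract the limiting profile through a combination of a priori estimates, rescaling, and a fixed-point/contraction argument for the rescaled profile. First I would establish uniform bounds: an $L^\infty$ bound on $u_\epsilon$ and a positive lower and upper bound on $\hat u_\epsilon$ away from zero and infinity, uniformly in $\epsilon$ (this is the content of Section~\ref{sec:3}). In particular I expect that $\hat u_\epsilon$ stays in a bounded set of $C(\bar D)$, and that $m - \hat u_\epsilon$ cannot be identically on the ``wrong side'' — heuristically, because for small $\epsilon$ the operator $\alpha\Delta + \epsilon^2\partial_{\alpha\alpha}$ with the given boundary conditions has principal eigenvalue governed by the smallest $\alpha$, so the population must concentrate near $\alpha = \underline\alpha$ and $\hat u_\epsilon$ must be close to $\theta_{\underline\alpha}$.

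Next I would prove the exponential decay estimate \eqref{eq:thm1}. The idea is to build a supersolution of the form $C\epsilon^{-2/3}\exp(-\beta(\alpha-\underline\alpha)\epsilon^{-2/3})$, or more precisely to use the auxiliary eigenvalue problem \eqref{eq:psistar}: writing $u_\epsilon(x,\alpha) = \psi^*(x,\alpha)v_\epsilon(x,\alpha)$ (the ``transformed problem'' of Section~\ref{sec:4}) removes the $x$-dependence of the leading-order operator and reduces matters, after the stretching $s = (\alpha-\underline\alpha)\epsilon^{-2/3}$, to an ODE in $s$ of Airy type with potential $\sim a_0 - a_1 s$. The Airy function's decay, together with a comparison argument using the fact that $\sigma^*$ is strictly increasing near $\underline\alpha$ (so $a_1 = (\sigma^*)'(\underline\alpha) > 0$), yields the stated bound for every $\beta>0$ once $\epsilon$ is small; the remaining bounded range of $\epsilon$ is handled by compactness and positivity of any fixed solution.

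Then I would identify the rescaled limit \eqref{eq:thm2}. Set $w_\epsilon(x,s) = \epsilon^{2/3}u_\epsilon(x, \underline\alpha + \epsilon^{2/3}s)$, defined on $D\times(0,\epsilon^{-2/3}(\overline\alpha-\underline\alpha))$. The equation \eqref{eq:main} becomes, to leading order, $\underline\alpha\Delta_x w + w_{ss} + (m(x) - \hat u_\epsilon(x))w = 0$ with $w_s(x,0)=0$, and the exponential bound \eqref{eq:thm1} gives compactness and forces decay as $s\to\infty$. Passing to a subsequential limit $w$ and using \eqref{eq:thm2-aa} (which should be proved first, in the spirit of Section~\ref{subsect:hatu}, by integrating the equation in $\alpha$ and using the decay estimate to show $\hat u_\epsilon \to \theta_{\underline\alpha}$), the limit solves $\underline\alpha\Delta_x w + w_{ss} + (m - \theta_{\underline\alpha})w = 0$. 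Separation of variables $w(x,s) = \theta_{\underline\alpha}(x)\eta(s)$ is consistent with $\sigma^*(\underline\alpha)=0$; but one must rule out other modes. Here the finer expansion $\sigma^*(\alpha) = a_1(\alpha-\underline\alpha) + o(\cdot)$ enters: keeping the next-order term in the rescaled equation produces precisely $\eta'' + (a_0 - a_1 s)\eta = 0$ after matching the normalization constant $a_0 = a_1^{2/3}A_0$ dictated by the Neumann condition $\eta'(0)=0$ at the first critical point of the Airy function. Uniqueness of $\eta^*$ in \eqref{eq:etastar} then pins down the limit, and since the limit is subsequence-independent, the full convergence \eqref{eq:thm2} follows; \eqref{eq:thm2-aa} is recovered by integrating \eqref{eq:thm2} in $\alpha$ and using $\int_0^\infty \eta^* = 1$.

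The main obstacle, I expect, is the rigorous justification of the concentration scale $\epsilon^{2/3}$ and the selection of the correct profile $\eta^*$ — i.e., showing that $u_\epsilon$ genuinely concentrates at $\underline\alpha$ (and not merely that it is bounded by the claimed supersolution), and that no mass escapes to larger $\alpha$ or leaks into a different eigenmode. This requires a two-sided matching: the supersolution from the Airy comparison gives the upper bound, but a matching lower bound near $s=O(1)$ is needed, which I would obtain by constructing a subsolution from a truncated $\theta_{\underline\alpha}(x)\eta^*(s)$ and invoking the uniform positivity of $\hat u_\epsilon$ and a sliding/maximum-principle argument. The boundary layer analysis at $\alpha = \underline\alpha$ (where the Neumann condition $u_\alpha=0$ meets the $\epsilon^2 u_{\alpha\alpha}$ term) and the careful use of the smooth dependence of $\sigma^*$ on $\alpha$ (Appendix~\ref{sec:B}) to extract $a_1$ are the technical heart of the argument.
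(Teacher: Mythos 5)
Your outline captures the right scaling and the right limiting Airy profile, but it has a circularity at its core. You propose to prove the decay estimate \eqref{eq:thm1} by transforming with the eigenpair $(\sigma^*,\psi^*)$ of \eqref{eq:psistar}, whose potential is $m-\theta_{\underline\alpha}$; the positivity $a_1=(\sigma^*)'(\underline\alpha)>0$ that drives your comparison argument is only usable once you know the limiting potential really is $\theta_{\underline\alpha}-m$, i.e.\ once \eqref{eq:thm2-aa} is known --- but you then propose to prove \eqref{eq:thm2-aa} using that very decay estimate. The paper breaks this circle by transforming instead with the $\epsilon$-dependent eigenpair $(\sigma_\epsilon,\psi_\epsilon)$ of \eqref{eq:psi}, whose potential is the actual $h_\epsilon=\hat u_\epsilon-m$, and by proving separately (Lemma \ref{lem:2}) that every weak subsequential limit $h_0$ of $h_\epsilon$ is non-constant. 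That lemma --- proved via the integral identities \eqref{eq:byparts} and \eqref{eq:lem1-1} to force the putative constant to be $0$, followed by a blow-up and the cylinder Liouville theorem (Proposition \ref{prop:A1}) to rule out $h_0\equiv 0$ --- is what yields $\liminf_{\epsilon\to 0}\partial_\alpha\sigma_\epsilon\geq c_0>0$ (Lemma \ref{lem:4.1}(iii)) and hence the supersolution for \eqref{eq:thm1} \emph{before} the limit of $\hat u_\epsilon$ is identified. This step is absent from your proposal and is not replaced by the heuristic in your first paragraph.

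A second gap is the scaling bookkeeping in your third paragraph: with $w_\epsilon=\epsilon^{2/3}u_\epsilon(x,\underline\alpha+\epsilon^{2/3}s)$ the equation reads $\alpha\Delta_x w+\epsilon^{2/3}w_{ss}+(m-\hat u_\epsilon)w=0$, so the $s$-derivative term is \emph{lower} order than the $x$-operator and your claimed leading-order PDE does not balance. The Airy equation only emerges after factoring out $\psi_\epsilon$, which reduces the $x$-operator to the scalar $\sigma_\epsilon(\alpha)$, and after showing $\sigma_\epsilon(\underline\alpha)=O(\epsilon^{2/3})$ (Lemma \ref{claim:var}, obtained from a test function in the variational characterization \eqref{eq:var2}); then $\epsilon^{2/3}w_{ss}$ balances $[\sigma_\epsilon(\underline\alpha)+\sigma_{1,\epsilon}(\alpha-\underline\alpha)]w$ precisely on the scale $\alpha-\underline\alpha\sim\epsilon^{2/3}$. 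Finally, the paper pins down the limit profile not by a sub/supersolution matching but by the $L^2$ normalization of the minimizer $\tilde w_\epsilon$, an $H^1$ bound, and a Harnack--Liouville argument showing the limit is independent of $x$; your proposed subsolution built from a truncated $\theta_{\underline\alpha}(x)\eta^*(s)$ would still need an anchor point where $u_\epsilon$ is bounded below at the correct order $\epsilon^{-2/3}$, which is exactly what the normalization and the identity $\int\hat u_\epsilon\to\int\theta_{\underline\alpha}$ supply.
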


%



As the proof of Theorem \ref{thm} is fairly technical, we briefly outline the main ingredients
for readers. Our idea is to establish the ``separation of variables"
formula \eqref{eq:thm2} for $u_\epsilon$, by
introducing the scaling $s=(\alpha-\bar\alpha)/\epsilon^{2/3}$ 
 and writing  $u_\epsilon(x, \alpha)=\psi_\epsilon (x, \alpha) w_\epsilon(x, s)$,
where $\psi_\epsilon$ is the principal eigenfunction of 
$-\alpha\Delta \psi+(\hat{u}_\epsilon-m)\psi=\sigma\psi$, 
subject to the zero Neumann boundary condition and the integral constraint 
$\int_D \psi^2=\int_D \theta_{\underline\alpha}^2$. 
The main body of our paper is devoted to the proof of following two things: 
(i) As $\epsilon \to 0$, $\hat{u}_\epsilon\to \theta_{\underline\alpha}$ uniformly,
which implies that $\psi_\epsilon(x, \underline\alpha+\epsilon^{2/3}s)
\to \theta_{\underline\alpha}$. The concentration phenomenon of $u_\ep$
on the subset $D \times \{\underline\al\}$ of $\Omega$, i.e. \eqref{eq:thm2-aa},
is established in Section 5 with the help of several key estimates
proved in Sections 3 and 4, 
 such as the $L^\infty$ estimate of $\hat{u}_\ep$, as well as 
 the limit $\lim_{\ep \to 0} (\hat{u}_\ep- m)$ being non-constant;
(ii)  As $\epsilon \to 0$, $\epsilon^{-2/3}w_\epsilon(x, s)$ converges to $\eta(s)$
uniformly for some function $\eta$ which is independent of the spatial variable $x$. 
This is done in Section 6 along with some key estimates established in the earlier sections.


\begin{remark}
{\rm
After this work is completed, the authors learned that a closely related result, under a slightly different formulation, is independently proved by B. Perthame and P.E. Souganidis
under a different approach,
where an intermediate trait attains the minimum diffusion rate and an interior Dirac mass  is found when
the mutation rate tends to zero. Apart from the distinction in our approaches, we note the following distinct features of our work: (i) A boundary concentration is found in our set-up, instead of an interior concentration in {\rm \cite{PS}} which predicts different scalings in powers of $\ep$; (ii)  Our method does not assume the convexity of spatial domain $D$;   (iii) Various detailed $L^\infty$ estimates and asymptotic limits are obtained (Theorem \ref{thm}) which paves the way to the proof of asymptotic stability and uniqueness of $u_\ep$ in a future paper; (iv) The key estimate of the limit $h_0(x)= \lim_{\ep \to 0} \hat{u}_\ep(x) - m(x)$ being non-constant (Lemma \ref{lem:2}) reflects the effect of spatial heterogeneity, the
 underlying mathematical reason for the selection of small diffusion rate. See also Proposition \ref{prop:PS} which makes the connection to {\rm  \cite[Lemma 4.3]{PS}}.
 }
\end{remark}

\section{Properties of $\hat u_\epsilon$}\label{sec:3}

In this section we establish various properties of $\hat u_\epsilon$.
Recall that $\hat u_\epsilon$ is defined in \eqref{eq:u-epsilon-a}.
\begin{lemma}\label{lem:1}
There exists some positive constant
$\delta_1=\delta_1(\underline\alpha, \bar\alpha, m)$ independent of $\epsilon$ such that
$$
\delta_1 \leq \hat u_\epsilon(x) \leq 1/\delta_1 \quad \text{ in }D
$$
for all $\epsilon>0$. In particular,
\begin{equation}\label{eq:h}
h_\epsilon(x):= \hat u_\epsilon(x) - m(x)
\end{equation}
is bounded uniformly in $L^\infty(D)$.
\end{lemma}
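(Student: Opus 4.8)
The plan is to establish the two bounds separately. The guiding idea is that, although the operator $\alpha\Delta_x+\epsilon^2\partial_{\alpha\alpha}$ degenerates in the $\alpha$-direction as $\epsilon\to0$, the average $\hat u_\epsilon$ is comparable to a function that solves a genuinely uniformly elliptic equation in the $x$-variable alone, with coefficients carrying no $\epsilon$; this allows one to invoke the maximum principle and the Harnack inequality with $\epsilon$-independent constants. To set this up, I would integrate \eqref{eq:main} over $\alpha\in(\underline\alpha,\overline\alpha)$: since $(u_\epsilon)_\alpha=0$ at $\alpha\in\{\underline\alpha,\overline\alpha\}$ the term $\epsilon^2(u_\epsilon)_{\alpha\alpha}$ integrates to zero, and putting $\tilde u_\epsilon(x):=\int_{\underline\alpha}^{\overline\alpha}\alpha\,u_\epsilon(x,\alpha)\,d\alpha$ one obtains
\begin{equation*}
\Delta_x\tilde u_\epsilon=(\hat u_\epsilon-m)\hat u_\epsilon\ \text{ in }D,\qquad \frac{\partial\tilde u_\epsilon}{\partial n}=0\ \text{ on }\partial D,\qquad \underline\alpha\,\hat u_\epsilon\le\tilde u_\epsilon\le\overline\alpha\,\hat u_\epsilon ,
\end{equation*}
the last inequalities holding because $u_\epsilon>0$.

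For the upper bound, let $x_1\in\bar D$ be a maximum point of $\tilde u_\epsilon$. At $x_1$ one has $\Delta_x\tilde u_\epsilon(x_1)\le0$ (the second-derivative test at an interior maximum; if $x_1\in\partial D$, this still holds by the Neumann condition $\partial_n\tilde u_\epsilon=0$ together with the regularity of $\partial D$). From the equation above, $(\hat u_\epsilon(x_1)-m(x_1))\hat u_\epsilon(x_1)\le0$, and since $\hat u_\epsilon(x_1)>0$ this forces $\hat u_\epsilon(x_1)\le m(x_1)\le\max_{\bar D}m$. Therefore $\sup_D\tilde u_\epsilon=\tilde u_\epsilon(x_1)\le\overline\alpha\,\hat u_\epsilon(x_1)\le\overline\alpha\max_{\bar D}m$, whence $\hat u_\epsilon\le\tilde u_\epsilon/\underline\alpha\le(\overline\alpha/\underline\alpha)\max_{\bar D}m$ in $D$, which gives the upper bound.

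For the lower bound I would argue in two steps. First, since $u_\epsilon>0$ on $\bar\Omega$ (strong maximum principle and Hopf's lemma), dividing \eqref{eq:main} by $u_\epsilon$ and integrating over $\Omega$ gives, after the boundary terms vanish by the Neumann conditions,
\begin{equation*}
0=\int_\Omega\alpha\,|\nabla_x\log u_\epsilon|^2\,dx\,d\alpha+\epsilon^2\int_\Omega\Big|\tfrac{(u_\epsilon)_\alpha}{u_\epsilon}\Big|^2\,dx\,d\alpha+\int_\Omega(m-\hat u_\epsilon)\,dx\,d\alpha ,
\end{equation*}
so that $\int_\Omega(m-\hat u_\epsilon)\le0$, i.e. $\int_D\hat u_\epsilon\ge\int_D m>0$ by assumption {\rm (A)}; equivalently, $u_\epsilon>0$ being a steady state forces the principal eigenvalue of the associated linear operator to vanish, and testing its Rayleigh quotient against the constant $1$ yields the same inequality. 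Second, having the upper bound, $|\Delta_x\tilde u_\epsilon|=|(\hat u_\epsilon-m)\hat u_\epsilon|\le K\tilde u_\epsilon$ with $K=K(\underline\alpha,\overline\alpha,m)$ independent of $\epsilon$; writing this as $\Delta_x\tilde u_\epsilon+b_\epsilon\tilde u_\epsilon=0$ with $\|b_\epsilon\|_{L^\infty(D)}\le K$, the Harnack inequality for positive solutions of a uniformly elliptic equation — extended up to $\partial D$ by even reflection across $\partial D$ and iterated over a finite cover of $\bar D$ — gives $\sup_D\tilde u_\epsilon\le C_H\inf_D\tilde u_\epsilon$ with $C_H$ independent of $\epsilon$. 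Consequently
\begin{equation*}
\inf_D\hat u_\epsilon\ \ge\ \frac{1}{\overline\alpha}\inf_D\tilde u_\epsilon\ \ge\ \frac{1}{\overline\alpha\,C_H\,|D|}\int_D\tilde u_\epsilon\ \ge\ \frac{\underline\alpha}{\overline\alpha\,C_H\,|D|}\int_D\hat u_\epsilon\ \ge\ \frac{\underline\alpha}{\overline\alpha\,C_H\,|D|}\int_D m\ >\ 0 ,
\end{equation*}
which gives the lower bound. Taking $\delta_1$ to be the smaller of the two positive constants obtained then yields the claimed two-sided estimate, and the boundedness of $h_\epsilon=\hat u_\epsilon-m$ is immediate since $\|h_\epsilon\|_{L^\infty(D)}\le1/\delta_1+\max_{\bar D}|m|$.

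The step I expect to require the most care is keeping every elliptic estimate $\epsilon$-uniform; this is precisely why one passes to $\tilde u_\epsilon$ (equivalently $\hat u_\epsilon$), which obeys an $x$-equation with $\epsilon$-free uniformly elliptic structure, rather than applying Harnack or boundary estimates to $u_\epsilon$ itself, whose ellipticity in $\alpha$ is only $O(\epsilon^2)$ and would produce constants blowing up as $\epsilon\to0$. The other indispensable ingredient is the lower $L^1$ bound $\int_D\hat u_\epsilon\ge\int_D m$, which is exactly where hypothesis {\rm (A)} enters and without which the population could vanish; the remaining boundary bookkeeping (via reflection across a sufficiently regular $\partial D$) is routine.
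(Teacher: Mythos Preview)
Your proof is correct and follows essentially the same approach as the paper's: both introduce the weighted average $\tilde u_\epsilon=\int\alpha\,u_\epsilon\,d\alpha$ (the paper's $v_\epsilon$), derive the $\epsilon$-free elliptic equation $\Delta_x\tilde u_\epsilon=(\hat u_\epsilon-m)\hat u_\epsilon$, obtain the upper bound from the maximum principle at a maximum of $\tilde u_\epsilon$, and then combine Harnack's inequality for $\tilde u_\epsilon$ (with $L^\infty$-bounded zeroth-order coefficient) with the integral inequality $\int_D\hat u_\epsilon\ge\int_D m$ coming from dividing \eqref{eq:main} by $u_\epsilon$ and integrating. The only cosmetic difference is that the paper writes the zeroth-order coefficient as $h_\epsilon k_\epsilon$ with $k_\epsilon=\hat u_\epsilon/\tilde u_\epsilon$, whereas you bound $|\Delta_x\tilde u_\epsilon|\le K\tilde u_\epsilon$ directly; both yield the same $\epsilon$-independent Harnack constant.
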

\begin{proof}
The idea of the upper bound follows from  \cite{Turanova2014}.
Define
\begin{equation}\label{eq:v0}
v_\epsilon(x) = \int_{\underline\alpha}^{\bar\alpha} \alpha u_\epsilon(x,\alpha)\,d\alpha.\end{equation} Then we have
\begin{equation}\label{eq:compare}
\underline\alpha \hat u_\epsilon(x) \leq v_\epsilon(x) \leq \bar\alpha \hat u_\epsilon(x) \quad \text{ in }D.
\end{equation}
Integrating \eqref{eq:main} over $\alpha$ gives
\begin{equation}\label{eq:v}
\left\{
\begin{array}{ll}
{\Delta} v_\epsilon(x) + (m(x) - \hat u_\epsilon(x))\hat u_\epsilon(x) = 0&\text{ in }D,\\
\frac{\partial v_\epsilon}{\partial n} = 0&\text{ on }\partial D.
\end{array}
\right.
\end{equation}
Let $\max_{{\bar D}} v_\epsilon = v_\epsilon(x_0)$, then $\hat u_\epsilon(x_0) \leq m(x_0) \leq \max_{{\bar D}} m$ (see \cite[Proposition 2.2]{LN}), and by \eqref{eq:compare},
$$
\underline\alpha \max_{{\bar D}} \hat u_\epsilon \leq \max_{{\bar D}} v_\epsilon =  v_\epsilon(x_0) \leq {\bar\alpha} \hat u_\epsilon(x_0) \leq  {\bar\alpha} \max_{{\bar D}} m.
$$
Hence
we deduce that
 $\|\hat u_\epsilon\|_{L^\infty(D)}$ and $\|h_\epsilon\|_{L^\infty(D)}$ are bounded uniformly in  $\epsilon$, where $h_\epsilon(x)= \hat u_\epsilon(x) - m(x)$ is given in \eqref{eq:h}.

Next, we show the lower bound of $\hat u_\epsilon$.
By \eqref{eq:compare}, we deduce that $$\hat{u}_\epsilon(x) = k_\epsilon(x) v_\epsilon(x)$$ for some  $k_\epsilon(x) \in L^\infty(D)$ such that ${\bar\alpha}^{-1} \leq k_\epsilon(x) \leq {\underline\alpha}^{-1}$. So that $v_\epsilon$ is a positive solution of
$$
-{\Delta} v_\epsilon + h_\epsilon(x) k_\epsilon(x) v_\epsilon = 0 \quad \text{ in }D,\quad \text{ and }\quad \frac{\partial v_\epsilon}{\partial n}=0\quad \text{ on }\partial D,
$$
where we have already shown that $h_\epsilon = \hat{u}_\epsilon - m$ is uniformly bounded (in $L^\infty(D)$) in $\epsilon$. Therefore, the Harnack inequality applies so that
\begin{equation}\label{eq:harnack1}
\max_{{\bar D}} v_\epsilon \leq C' \min_{{\bar D}} v_\epsilon
\end{equation}
for some constant $C'>1$ independent of $\epsilon$. Combining with \eqref{eq:compare}, we have
\begin{equation}\label{eq:harnack}
\underline\alpha\max_{{\bar D}}\hat{u}_\epsilon
\leq \max_{{\bar D}}v_\epsilon
\leq {C'} \min_{{\bar D}} v_\epsilon
\leq {C'\bar\alpha}\min_{{\bar D}} \hat u_\epsilon.
\end{equation}

Now,
if we divide \eqref{eq:main} by $u_\epsilon$ and integrate by parts over $\Omega = D \times(\underline\alpha, \overline\alpha)$, we obtain
\begin{equation}\label{eq:byparts}
(\overline\alpha - \underline\alpha) \int_D (\hat u_\epsilon - m)\,dx = \int_\Omega h_\epsilon(x)\,d\alpha dx = 
\int_\Omega \frac{ \alpha |\nabla_x u_\epsilon|^2 + \epsilon^2 |(u_\epsilon)_\alpha|^2}{u_\epsilon^2} >0.
\end{equation}
We deduce  by \eqref{eq:harnack} and \eqref{eq:byparts} that
$$
\frac{C' \overline\alpha}{\underline\alpha}\min_{\overline D} \hat{u}_\epsilon \geq \max_{\overline D} \hat{u}_\epsilon \geq \frac{1}{|D|} \int_D \hat{u}_\epsilon(x)\,dx \geq \frac{1}{|D|} \int_D m(x)\,dx >0.
$$
This establishes the uniform lower bound of $\hat u_\epsilon$.
%
\end{proof}

\begin{remark}\label{rmk:0}
{\rm Since $\|\hat{u}_\epsilon\|_{L^\infty(D)}$ and $\|v_\ep\|_{L^\infty(D)}$ are bounded uniformly in $\epsilon$, applying elliptic $L^p$ estimate to \eqref{eq:v} implies that $\|v_\epsilon\|_{W^{2,p}(D)}$ is bounded uniformly in $\epsilon$. In particular, there exists sequence $\epsilon_k \to 0$ such that $v_{\epsilon_k}$ converges uniformly on $D$.}
\end{remark}


\begin{lemma}\label{lem:uinfty}
There exists a constant $C>0$ such that for any positive solution $u_\epsilon$ of \eqref{eq:main},
$$
\sup_{ D \times (\underline\alpha,\overline\alpha)} u_\epsilon \leq C \epsilon^{-1}.
$$
\end{lemma}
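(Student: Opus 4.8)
The plan is to absorb the $\epsilon$-degeneracy of the operator $\alpha\Delta_x+\epsilon^2\partial_{\alpha\alpha}$ by rescaling the trait variable, and then to combine a single integral estimate with a local maximum principle. Since the coefficient $\alpha$ of $\Delta_x$ does not depend on $x$, equation \eqref{eq:main} is in divergence form; after substituting $s=(\alpha-\underline\alpha)/\epsilon$ and setting $w_\epsilon(x,s)=u_\epsilon(x,\underline\alpha+\epsilon s)$ on the cylinder $Q_\epsilon:=D\times(0,L_\epsilon)$, $L_\epsilon:=(\overline\alpha-\underline\alpha)/\epsilon$, it becomes
\begin{equation*}
\mathrm{div}_{(x,s)}\!\big(A_\epsilon(s)\,\nabla_{(x,s)}w_\epsilon\big)+c_\epsilon(x)\,w_\epsilon=0 \quad\text{in }Q_\epsilon,
\end{equation*}
with $A_\epsilon(s)=\mathrm{diag}(\underline\alpha+\epsilon s,\dots,\underline\alpha+\epsilon s,1)$ and $c_\epsilon=m-\hat u_\epsilon$, subject to the conormal (Neumann) condition $A_\epsilon\nabla w_\epsilon\cdot\nu=0$ on $\partial Q_\epsilon$. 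By Lemma \ref{lem:1} this problem is \emph{uniformly} under control: $A_\epsilon$ is uniformly elliptic with ellipticity ratio at most $\overline\alpha/\min\{\underline\alpha,1\}$, and $\|c_\epsilon\|_{L^\infty(D)}$ is bounded independently of $\epsilon$.

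The entire bound then comes from one estimate. Changing variables in \eqref{eq:u-epsilon-a} and using the upper bound in Lemma \ref{lem:1},
\begin{equation*}
\|w_\epsilon\|_{L^1(Q_\epsilon)}=\frac1\epsilon\int_D\hat u_\epsilon(x)\,dx\le \frac{|D|}{\delta_1\,\epsilon}.
\end{equation*}
By the local boundedness estimate for nonnegative subsolutions of uniformly elliptic divergence-form equations with bounded measurable coefficients (De Giorgi--Nash--Moser, in the version with $L^1$ on the right-hand side, valid up to a conormal-Neumann boundary), there is a constant $C>0$ depending only on $N$, the ellipticity ratio and $\sup_\epsilon\|c_\epsilon\|_{L^\infty}$ — hence independent of $\epsilon$ and of $L_\epsilon$ — such that $\sup_{B_1(z)\cap Q_\epsilon}w_\epsilon\le C\,\|w_\epsilon\|_{L^1(B_2(z)\cap Q_\epsilon)}\le C\,\|w_\epsilon\|_{L^1(Q_\epsilon)}$ for every $z\in\overline{Q_\epsilon}$. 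Taking the supremum over $z$ and undoing the scaling yields $\sup_\Omega u_\epsilon=\sup_{Q_\epsilon}w_\epsilon\le C'\epsilon^{-1}$. (Only small $\epsilon$ matters here; for $\epsilon$ bounded below the operator is already uniformly elliptic on the fixed bounded domain $\Omega$ and the same conclusion follows directly from $\int_\Omega u_\epsilon=\int_D\hat u_\epsilon\le|D|/\delta_1$.)

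The substance of the argument is in two technical points, the boundary one being the main obstacle. First, one exploits the divergence structure — not merely uniform ellipticity — so that the local maximum principle is available with the exponent $p=1$: the standard $L^2$ statement is downgraded to $L^1$ by the usual interpolation $\int w^p\le(\sup w)^{1/2}\int w^{p/2}\cdot(\cdots)$ followed by a standard absorption argument. Second, the estimate must be applied up to $\partial Q_\epsilon$: near $\partial D\times(0,L_\epsilon)$ one flattens $\partial D$, which preserves the uniformly elliptic divergence form together with a conormal condition; near the flat faces $D\times\{0\}$ and $D\times\{L_\epsilon\}$ one reflects evenly in $s$, and here the reflected $\Delta_x$-coefficient is $\underline\alpha+\epsilon|s|$ (respectively $\overline\alpha-\epsilon|s-L_\epsilon|$), which stays in $[\underline\alpha,\overline\alpha]$, so ellipticity is preserved and only a bounded number of reflections is needed, changing $\|w_\epsilon\|_{L^1}$ by a fixed factor. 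The remaining ingredients — regularity of $u_\epsilon$, and covering $\overline{Q_\epsilon}$ by unit balls with an $\epsilon$-independent local multiplicity — are routine.
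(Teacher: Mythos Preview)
Your proof is correct and follows essentially the same approach as the paper: rescale the trait variable by $\epsilon$ to obtain a uniformly elliptic equation with $\epsilon$-independent coefficients, then combine a De Giorgi--Nash--Moser/Harnack-type estimate with the $L^\infty$ bound on $\hat u_\epsilon$ from Lemma~\ref{lem:1}. The paper phrases the elliptic step slightly differently --- it applies Harnack's inequality centered at the maximum point of $u_\epsilon$ and integrates the resulting lower bound over an $\epsilon$-strip in $\alpha$ to get $\hat u_\epsilon \geq C\epsilon\sup u_\epsilon$ --- but the ingredients and the logic are the same.
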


\begin{proof}
Choose $x_\epsilon$ and $\alpha_\epsilon$ such that the supremum of $u_\epsilon$ is attained at $(x_\epsilon, \alpha_\epsilon)$.
Next, let $U_\epsilon(x,\tau) = u_\epsilon(x, \alpha_\epsilon + \epsilon\tau)$,
then
 (extending $u_\epsilon$ to $D\times[\underline \al - \ep_0, \overline\al + \ep_0]$  by reflection across the boundary portions $D \times\{ \underline\alpha, \overline\alpha\}$ if necessary)
  one
may observe that $U_\epsilon$ satisfies a uniformly elliptic equation with uniformly bounded (in $L^\infty)$ coefficients
$$
\left\{
\begin{array}{ll}
\alpha \Delta_x U_\epsilon + U_{\epsilon,\tau\tau} + h_\epsilon(x) U_\epsilon =0 & \text{ in }D \times [-2,2],
\\
\frac{\partial U_\epsilon}{\partial n} =0 & \text{ on }\partial D \times [-2,2],
\end{array}\right.
$$
where $\alpha = \alpha_\epsilon + \ep \tau$ is always bounded between $[\underline\al - \ep_0, \overline\al, + \ep_0] \subset (0,+\infty)$. Hence, we may apply the Harnack's inequality to yield a positive constant $C$ independent of $\epsilon$ such that
$$
u_\epsilon(x,\alpha_\epsilon + \epsilon \tau) \geq C u_\epsilon(x_\epsilon,\alpha_\epsilon) = C\sup_{D\times (\underline\alpha, \overline\alpha)} u_\epsilon
$$
for all $x \in D$, $\tau \in [-1,1]$.
%
Hence,
$$
\hat{u}_\epsilon(x) = \int_{\underline\alpha}^{\overline\alpha} u_\epsilon(x,\alpha)\,d\alpha \geq
  C{\epsilon} \sup_{D \times(\underline\alpha, \overline\alpha)} u_\epsilon
$$
for all $\epsilon$ sufficiently small. By Lemma \ref{lem:1}, we deduce that
$$
\sup_{D \times(\underline\alpha, \overline\alpha)} u_\epsilon \leq C'\epsilon^{-1}
$$
for some positive constant $C'$.
\end{proof}
By Lemma \ref{lem:1}, $h_\epsilon$ is bounded in $L^\infty(D)$ uniformly in $\epsilon$. Therefore, up to subsequences $\epsilon_j \to 0$, $h_{\epsilon_j}$ converges weakly in $L^p(D)$ for all $p>1$.
We first prove an important property of any subsequential limit $h_0$.

\begin{lemma}\label{lem:2}
Let $h_0$ be a weak (subsequential)  limit of $h_\epsilon(x)$ in $L^p(D)$ ($p>1$) as $\epsilon \to 0$, then $h_0(x)$ is non-constant in  $D$.
\end{lemma}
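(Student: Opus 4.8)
The goal is to show that any $L^p$-weak limit $h_0$ of $h_\epsilon = \hat u_\epsilon - m$ cannot be a constant. The natural strategy is to argue by contradiction: suppose $h_0 \equiv c$ for some constant $c$. First I would pin down the value of $c$ by passing to the limit in the integral identity \eqref{eq:byparts}. Since $\int_D (\hat u_\epsilon - m)\,dx \to c|D|$ and the right-hand side of \eqref{eq:byparts} is nonnegative, we get $c \geq 0$. More importantly, the right-hand side, $\int_\Omega (\alpha|\nabla_x u_\epsilon|^2 + \epsilon^2 |(u_\epsilon)_\alpha|^2)/u_\epsilon^2$, measures exactly how far $u_\epsilon$ is from being independent of $x$; I would like to show this quantity tends to $0$, which would force $c = 0$, i.e. $h_0 \equiv 0$, i.e. $\hat u_\epsilon \to m$ in the weak (and, via \eqref{eq:v}, strong) sense.

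The cleanest route to a contradiction, given Remark \ref{rmk:0}, is via $v_\epsilon = \int \alpha u_\epsilon\,d\alpha$, which is bounded in $W^{2,p}(D)$ uniformly in $\epsilon$ and hence (along a subsequence) converges uniformly to some $v_0 \geq \delta_1\underline\alpha > 0$. Passing to the limit in \eqref{eq:v}, which reads $-\Delta v_\epsilon = (m - \hat u_\epsilon)\hat u_\epsilon = -h_\epsilon \hat u_\epsilon$, and using that $\hat u_\epsilon$ is comparable to $v_\epsilon$ (so $\hat u_\epsilon = k_\epsilon v_\epsilon$ with $k_\epsilon$ bounded) and converges weakly, I would obtain that $v_0$ solves $-\Delta v_0 = -h_0 k_0 v_0$ weakly, where $k_0$ is a weak limit of $k_\epsilon$. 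If $h_0 \equiv c$, the analysis splits: integrating the limiting equation over $D$ with the Neumann condition gives $c \int_D k_0 v_0 = 0$, and since $k_0 v_0 > 0$ (because $k_0 \geq \bar\alpha^{-1}$ and $v_0 > 0$), we must have $c = 0$. Thus $h_0 \equiv 0$, meaning $\hat u_\epsilon \to m$ strongly in $L^p$, and in particular (since $\hat u_\epsilon = k_\epsilon v_\epsilon$ with $v_\epsilon$ converging uniformly) the weak limit $m$ must be achievable, forcing $v_0 = m/k_0$; but then $-\Delta v_0 = 0$ with zero Neumann data forces $v_0$ constant, hence $m = k_0 v_0$. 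Since $k_0 \in [\bar\alpha^{-1}, \underline\alpha^{-1}]$ and $v_0$ is a positive constant, $m$ would have to lie between two positive constants — but more is true: I would show $m$ itself must be constant, contradicting assumption (A).

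To make the last step airtight, rather than chasing $k_0$ I would prefer to exploit the structure more directly. Since $h_0 \equiv 0$ means $\hat u_\epsilon \to m$ weakly, and $\hat u_\epsilon \geq \delta_1 > 0$, we have $m \geq \delta_1 > 0$, consistent with (A). The real contradiction comes from the full equation \eqref{eq:main} divided by $u_\epsilon$: integrating only over $\alpha$ (not over $x$) yields, with $w_\epsilon = -\log u_\epsilon$ schematically, a relation of the form $\Delta_x$-type terms balancing $h_\epsilon$; more concretely I would revisit \eqref{eq:byparts} and show that when $h_\epsilon \to 0$, the Dirichlet-type energy $\int_\Omega \alpha |\nabla_x u_\epsilon|^2/u_\epsilon^2 \to 0$, which combined with the uniform positivity and upper bounds on $\hat u_\epsilon$ forces $u_\epsilon(\cdot,\alpha)/\hat u_\epsilon$ to flatten in $x$; but then $\hat u_\epsilon \to m$ together with $-\Delta v_\epsilon = -h_\epsilon \hat u_\epsilon \to 0$ and $v_\epsilon$ asymptotically proportional to $\hat u_\epsilon \to m$ yields $-\Delta m = 0$ in the limit with Neumann data, so $m$ is constant — contradicting (A).

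**Main obstacle.** The delicate point is the passage to the limit in the nonlinear term $h_\epsilon \hat u_\epsilon$ (and in $k_\epsilon v_\epsilon$): $h_\epsilon$ only converges \emph{weakly} in $L^p$, while we are multiplying it by $\hat u_\epsilon$, which also only converges weakly, so the product need not converge to the product of the limits. The resolution is that $v_\epsilon$ (hence, by comparability, a good proxy for $\hat u_\epsilon$) converges \emph{strongly} (uniformly, by the $W^{2,p}$ bound and compact embedding), so $h_\epsilon v_\epsilon$ \emph{does} converge weakly to $h_0 v_0$; one must carefully track which factor enjoys strong convergence. A second subtlety is justifying that the weak limit of $k_\epsilon$ exists along the subsequence and stays bounded below by $\bar\alpha^{-1}$, which follows from the pointwise two-sided bounds ${\bar\alpha}^{-1} \leq k_\epsilon \leq {\underline\alpha}^{-1}$ and the fact that weak limits preserve such pointwise bounds. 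Once these convergences are in hand, the conclusion $\Delta_x(\text{limit}) = 0$ with Neumann boundary condition, forcing a constant and hence $m$ constant, is immediate and yields the contradiction with (A).
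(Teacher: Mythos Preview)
Your proof has a genuine gap in the final step. The first half --- reducing to $c=0$ --- is essentially sound (though your route via ``integrate the limiting PDE'' runs into the weak-times-weak product issue you yourself flag; the paper instead uses the exact identity $\int_D \hat u_\epsilon(m-\hat u_\epsilon)=0$ together with weak lower semicontinuity of $\int \hat u_\epsilon^2$ to get $\int m(m+c) \geq \int (m+c)^2$, forcing $c=0$; equivalently $\int h_\epsilon^2 = -\int h_\epsilon m \to -c\int m$, so $c\le 0$).

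The real problem is what comes after $c=0$. You correctly observe that $v_\epsilon \to v_0$ in $C(\overline D)$ with $-\Delta v_0=0$ (once one upgrades $h_\epsilon\to 0$ to strong $L^2$ via $\int h_\epsilon^2 = -\int h_\epsilon m \to 0$), so $v_0$ is a positive constant. But this does \emph{not} force $m$ constant: the relation between $v_\epsilon$ and $\hat u_\epsilon$ is only $\underline\alpha\,\hat u_\epsilon \le v_\epsilon \le \overline\alpha\,\hat u_\epsilon$, so $v_0$ constant is perfectly compatible with $\hat u_\epsilon \to m$ nonconstant --- the variable ratio $k_\epsilon = \hat u_\epsilon/v_\epsilon$ can simply converge to $m/v_0$. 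Your claim that ``$v_\epsilon$ is asymptotically proportional to $\hat u_\epsilon$'' is exactly what is at stake and is not proved. The alternative route you sketch, using $\int_\Omega \alpha|\nabla_x\log u_\epsilon|^2 \to 0$ (which does follow from \eqref{eq:byparts}), faces the difficulty that this is an $L^2(\Omega)$ smallness; to conclude $\hat u_\epsilon(x)-\hat u_\epsilon(y)\to 0$ you must integrate $u_\epsilon(y,\alpha)\bigl(u_\epsilon(x,\alpha)/u_\epsilon(y,\alpha)-1\bigr)$ in $\alpha$, and the $\alpha$-mass of $u_\epsilon$ can concentrate on a small set where you have no pointwise control on $\nabla_x\log u_\epsilon$. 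The paper closes this gap with a blow-up argument: it rescales $\alpha \mapsto \alpha_k + \epsilon s$, applies Harnack to get compactness, and uses a Liouville theorem for positive harmonic functions on the cylinder $D\times\mathbb{R}$ to prove that $C_\epsilon:=\sup_\alpha\bigl(\sup_x u_\epsilon/\inf_x u_\epsilon\bigr)\to 1$, which is precisely the uniform-in-$\alpha$ flattening statement needed to conclude $\hat u_\epsilon$ tends to a constant and hence $m$ is constant. This Liouville/blow-up step is the missing ingredient in your plan.
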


\begin{proof}
Suppose to the contrary that for some $c \in \mathbb{R}$, $h_\epsilon(x) \rightharpoonup c $ weakly in $L^p(D)$ for all $p>1$.

\begin{claim}\label{claim:c=0}
 $c=0$.
 \end{claim}

By taking $\epsilon \to 0$ in \eqref{eq:byparts}, we deduce that $c \geq 0;$
%
%
i.e. for some $c\geq 0$, $\hat u_\epsilon = h_\epsilon + m(x) \rightharpoonup c + m(x)$ in $L^p$ for all $p>1$.

Next, integrating \eqref{eq:main} with respect to $\alpha$ and then $x$, we obtain
\begin{equation}\label{eq:lem1-1}
\int_D \hat u_\epsilon(x)( m(x) - \hat u_\epsilon(x))=0,
\end{equation}
so that
\begin{equation}\label{eq:c}
\int_D m(x) (m(x) + c) = \lim_{\epsilon \to 0} \int_D m(x) \hat u_\epsilon \geq \liminf_{\epsilon \to 0} \int_D (\hat u_\epsilon)^2 \geq \int_D (m(x) + c)^2,
\end{equation}
where the first inequality follows from \eqref{eq:lem1-1}, and the second inequality  from expanding $\int_D[\hat u_\epsilon(x) - (m(x) + c)]^2 \geq 0$ as
$$
\int_D \hat{u_\epsilon}^2 \geq 2\int_D \hat u_\epsilon (m + c) - \int_D (m+c)^2 \to \int_D (m+c)^2.
$$
Hence, by \eqref{eq:c}, we have $c=0$; i.e. $h_\epsilon(x) \rightharpoonup 0$ weakly in $L^p(D)$ for all $p>1$.



Note that we are done if $m<0$ somewhere, since then $h_\epsilon(x) \geq -m(x) >0$ in some open subset of $D$ independent of $\epsilon$,  which contradicts $h_\epsilon \rightharpoonup 0$. For the general case of  $m(x)$ being possibly non-negative, we continue via a blow-up argument.
Let
$$
C_\epsilon:= \sup_{\alpha \in (\underline\alpha, \overline\alpha)}
\left( \frac{\sup_{x \in D} u_\epsilon(x,\alpha)}{\inf_{x \in D} u_\epsilon(x,\alpha)} \right).
$$
It is enough to show that
\begin{claim}\label{claim:1}
$C_\epsilon \searrow 1$ as $\epsilon \to 0$.
\end{claim}
Assuming Claim \ref{claim:1}, then  by definition of $C_\epsilon$, 
$$
 u_\epsilon (x, \alpha) \leq C_\epsilon  u_\epsilon(y,\alpha) \quad \text{ for all }x,y \in D \text{ and }\underline\alpha < \alpha < \overline\alpha.
$$
This gives, upon integrating over $\alpha \in (\underline\alpha, \overline\alpha)$,
$$
\sup_{x \in D} \hat u_\epsilon(x) \leq C_\epsilon \inf_{y \in D} \hat u_\epsilon(y).
$$
Hence $\hat u_\epsilon(x)$ converges to a constant. But this also means that $h_\epsilon = \hat u_\epsilon - m(x)$ converges to a non-constant function, as $m(x)$ is non-constant. This is a contradiction.

It remains to prove Claim \ref{claim:1}. Assume to the contrary that there exist some constant $c_0>1$, and sequences $\epsilon_k \to 0$, $\alpha_k \to \alpha_0$, $x_k, y_k \in D$ such that
\begin{equation}\label{eq:c2}
u_{\epsilon_k}(x_k,\alpha_k) \geq c_0 u_{\epsilon_k}(y_k, \alpha_k).
\end{equation}
Extend $u_\epsilon$ to $D \times [\underline\alpha - \epsilon_0, \overline\alpha + \epsilon_0]$ for some fixed $\epsilon_0$ small by reflection on the boundary $D \times \{\underline\alpha, \overline\alpha\}$, and define
$$
U_k(x,s):= \frac{u_{\epsilon_k} (x,\alpha_k + \epsilon_k s/\sqrt{\alpha_k})}{\sup_{x\in D} u_{\epsilon_k}(x,\alpha_k)} \quad \text{ in }D \times \left( \frac{\underline\alpha - \epsilon_0 - \alpha_k}{\epsilon_k}, \frac{\overline\alpha + \epsilon_0 - \alpha_k}{\epsilon_k}\right).
$$
Then \eqref{eq:c2} says that for some $c_0>1$ independent of $k$
\begin{equation}\label{eq:c222}
\inf_{x \in D} U_k(x,0) \leq \frac{1}{c_0}\quad \text{ for all }k.
\end{equation}  Moreover, $U_k$ satisfies
$$
\left\{
\begin{array}{ll}
\Delta_x U_k + \frac{\alpha_k}{\alpha_k + \epsilon_k s/\sqrt{\alpha_k}} U_{k,ss} + \frac{h_\epsilon(x)}{\alpha_k + \epsilon_k s/\sqrt{\alpha_k}} U_k =0 \quad \text{ in }D \times \left( \frac{\underline\alpha - \epsilon_0 - \alpha_k}{\epsilon_k}, \frac{\overline\alpha + \epsilon_0 - \alpha_k}{\epsilon_k}\right),\\
U_k(x,s) >0 \quad\text{ in }D \times \left(  \frac{\underline\alpha - \epsilon_0 - \alpha_k}{\epsilon_k}, \frac{\overline\alpha + \epsilon_0 - \alpha_k}{\epsilon_k}\right),\quad  \sup_{x \in D} U_k(x,0) = 1.
\end{array}
\right.
$$
Since $\alpha_k \to \alpha_0  \in [\underline\alpha, \overline\alpha]$, $\epsilon_k \to 0$, the domain of $U_k$ converges to $D \times \mathbb{R}$ as $k \to \infty$. Moveover, by the uniform boundedness of $\|h_{\epsilon_k}\|_{L^\infty(D)}$ in $k$ (Lemma \ref{lem:1}), we have for each $M>0$
the coefficients of the equation of $U_k(x,s)$ are bounded in $L^\infty(\bar D \times [-M,M])$ uniformly in $k$. Since $\sup_{x \in D}U_k(x,0) = 1$, together with \eqref{eq:c222}  we may apply Harnack
 inequality to obtain a constant $C= C(M)$ independent of $k$ such that
$$
C^{-1}  \leq U_k(x,s) \leq C \quad \text{ for } x \in D \text{ and }|s| \leq M.
$$
By $L^p$ estimates (applied to $\overline D \times [-M,M]$ for each $M$), there is a subsequence $U_{k_i}$ that converges uniformly in compact subsets of $\bar D \times \mathbb{R}$ to a positive solution of
${\Delta_x} U_0 + (U_0)_{ss} = 0$ on $D \times \mathbb{R}$. (The limiting domain is $D \times \mathbb{R}$ as $\frac{\underline\alpha - \epsilon_0 - \alpha_k}{\epsilon} \to -\infty$ and $\frac{\overline\alpha + \epsilon_0 - \alpha_k}{\epsilon} \to \infty$.)
Now, we apply Proposition \ref{prop:A1}  for positive harmonic functions on a cylinder domain,
so that  $U_0 \equiv c_1$ for some positive constant $c_1$. Since $\sup_{x\in D} U_k(x,0) = 1$ for all $k$, we have $c_1 = 1$. In particular, we set $s =0$ and find a subsequence $U_{k_i}(x,0)$ converges  to $1$ uniformly for $x \in D$.
 This is in contradiction to \eqref{eq:c222} and proves Claim \ref{claim:1}. This completes the proof.
\end{proof}

The following result generalizes a key estimate of  \cite{PS}, proved wherein via Bernstein's method under the additional assumption that $D$ is convex. Although not needed for the rest of the paper, Proposition \ref{prop:PS} enables one to follow the elegant Hamilton-Jacobi approach as in \cite{PS} to show the concentration phenomenon.
\begin{proposition}\label{prop:PS}
Let $u_\ep$ be a positive solution of \eqref{eq:main}. Then there exists a constant $C>0$ independent of $\ep$ such that
$$
\epsilon\left\| \frac{u_{\epsilon,\alpha}}{u_\epsilon}\right\|_{L^\infty(\Omega)} + \left\| \frac{\nabla_x u_{\epsilon}}{u_\epsilon}\right\|_{L^\infty(\Omega)} \leq C.
$$
\end{proposition}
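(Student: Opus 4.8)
We work in variables that render the equation uniformly elliptic; the assertion then reduces to a uniform Lipschitz bound for the logarithm of the rescaled solution, which we extract from the Harnack inequality together with standard elliptic estimates. Put $s=(\alpha-\underline\alpha)/\epsilon$ and $w_\epsilon(x,s)=u_\epsilon(x,\underline\alpha+\epsilon s)$ on $D\times(0,L_\epsilon)$, $L_\epsilon:=(\overline\alpha-\underline\alpha)/\epsilon$. Since $\epsilon^2(u_\epsilon)_{\alpha\alpha}=(w_\epsilon)_{ss}$, the positive function $w_\epsilon$ solves
\[
(\underline\alpha+\epsilon s)\Delta_x w_\epsilon+(w_\epsilon)_{ss}-h_\epsilon(x)\,w_\epsilon=0\quad\text{in }D\times(0,L_\epsilon),
\]
with $\partial w_\epsilon/\partial n=0$ on $\partial D\times(0,L_\epsilon)$ and $(w_\epsilon)_s=0$ on $D\times\{0,L_\epsilon\}$, where $h_\epsilon=\hat u_\epsilon-m$ is bounded in $L^\infty(D)$ uniformly in $\epsilon$ by Lemma \ref{lem:1}. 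Because $\partial_{x_i}\log w_\epsilon=(u_\epsilon)_{x_i}/u_\epsilon$ and $\partial_s\log w_\epsilon=\epsilon(u_\epsilon)_\alpha/u_\epsilon$, the conclusion of the proposition is equivalent to $\|\nabla_{(x,s)}\log w_\epsilon\|_{L^\infty(D\times(0,L_\epsilon))}\le C$ with $C$ independent of $\epsilon$. The operator above is uniformly elliptic with ellipticity constants and $L^\infty$ bounds on all coefficients independent of $\epsilon$ (as $\underline\alpha+\epsilon s\in[\underline\alpha,\overline\alpha]$), and its principal part is $\epsilon$-Lipschitz in $s$, hence has a modulus of continuity uniform for $\epsilon\le 1$; the regime $\epsilon\to0$ is the relevant one, and $\epsilon$ in a compact subset of $(0,\infty)$ is handled the same way.

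\emph{Interior bound.} Fix a small $\rho>0$ and a point $z_0=(x_0,s_0)$ with $B_{2\rho}(z_0)\subset\subset D\times(0,L_\epsilon)$. The Harnack inequality for the equation of $w_\epsilon$ on $B_{2\rho}(z_0)$ gives $\sup_{B_\rho(z_0)}w_\epsilon\le C_1\inf_{B_\rho(z_0)}w_\epsilon$ with $C_1=C_1(N,\underline\alpha,\overline\alpha,\|h_\epsilon\|_\infty,\rho)$, in particular independent of $\epsilon$ and of $z_0$. Interior $W^{2,p}$ estimates apply (the principal coefficients are continuous with a modulus of continuity uniform in $\epsilon$, and the zeroth-order coefficient $h_\epsilon$ is merely in $L^\infty$), and with the embedding $W^{2,p}\hookrightarrow C^1$ for $p>N+1$ they yield $\|w_\epsilon\|_{C^1(B_{\rho/2}(z_0))}\le C_2\|w_\epsilon\|_{L^\infty(B_\rho(z_0))}$ with $C_2$ uniform. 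Hence
\[
|\nabla_{(x,s)}w_\epsilon(z_0)|\le C_2\sup_{B_\rho(z_0)}w_\epsilon\le C_1C_2\inf_{B_\rho(z_0)}w_\epsilon\le C_1C_2\,w_\epsilon(z_0),
\]
so $|\nabla_{(x,s)}\log w_\epsilon(z_0)|\le C_1C_2$; the (possibly exponentially small) size of $w_\epsilon$ cancels out.

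\emph{Points near the boundary.} For $z_0$ within $2\rho$ of $\partial D\times(0,L_\epsilon)$, of $D\times\{0\}$ or $D\times\{L_\epsilon\}$, or of their common edges, we reduce to the interior case by reflection. Across $D\times\{0\}$ and $D\times\{L_\epsilon\}$ we use even reflection in $s$ (legitimate since $(w_\epsilon)_s=0$ there); the extended function solves the same type of uniformly elliptic equation, now with coefficient $\underline\alpha+\epsilon|s|$ (resp.\ $\overline\alpha-\epsilon(s-L_\epsilon)$), still lying in a fixed compact subset of $(0,\infty)$. Across $\partial D\times(0,L_\epsilon)$ we flatten the boundary by a fixed ($\epsilon$-independent) local diffeomorphism and reflect in the flattened normal variable; written in boundary-normal coordinates $\Delta_x$ has no normal--tangential cross terms in its principal part, so the even reflection produces an equation whose principal coefficients are Lipschitz, hence continuous with a modulus depending only on $\partial D,\underline\alpha,\overline\alpha$, uniform in $\epsilon$, while the Neumann condition $\partial w_\epsilon/\partial n=0$ makes the reflected function $C^1$ across the interface and therefore a $W^{2,p}$ solution across it. Edge points are handled by reflecting in both variables simultaneously. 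In every case $z_0$ becomes an interior point of an enlarged domain on which the extended $w_\epsilon$ is a positive solution of a uniformly elliptic equation (with constants uniform in $\epsilon$ and $z_0$), and a ball of a fixed radius about $z_0$ lies inside that domain; the previous step then applies verbatim, and taking the supremum over $z_0$ finishes the proof.

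\emph{Main obstacle.} The analytic core — Harnack together with interior $W^{2,p}/C^{1}$ estimates forcing $|\nabla\log w_\epsilon|\lesssim 1$ — is routine once the equation is in the rescaled variables; the delicate step is the boundary, namely performing the reflections across $\partial D$ (via boundary-normal coordinates, so that the reflected principal coefficients remain continuous) and across the faces $D\times\{\underline\alpha\}$, $D\times\{\overline\alpha\}$ and their edges, all the while keeping the ellipticity constants and moduli of continuity uniform in $\epsilon$. This is exactly the point at which the Bernstein approach of \cite{PS} required convexity of $D$, and which the present argument avoids. An equally viable alternative is a blow-up argument: were the bound to fail, one selects via a doubling lemma points $z_k$ and scales $M_k=|\nabla\log w_{\epsilon_k}(z_k)|\to\infty$ at which $|\nabla\log w_{\epsilon_k}|$ is locally almost maximal, rescales by $M_k$, observes that the rescaled zeroth-order term is $O(M_k^{-2})\to0$ and that the limiting domain (after reflecting out the boundary) is all of $\mathbb{R}^{N+1}$, so the limit is a positive solution of a constant-coefficient elliptic equation with $|\nabla\log(\cdot)|=1$ at the origin — contradicting the Liouville theorem for positive solutions (Proposition \ref{prop:A1}, or its whole-space version).
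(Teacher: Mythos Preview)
Your proof is correct and follows essentially the same approach as the paper: rescale $\alpha$ by $\epsilon$ to obtain a uniformly elliptic equation, then combine Harnack with $W^{2,p}\hookrightarrow C^1$ estimates to bound $|\nabla\log w_\epsilon|$. The only packaging difference is that the paper centers the rescaling at each $\alpha_0$ and applies Harnack and $L^p$ estimates directly on the cylinder $D\times(-1,1)$ with the Neumann condition on $\partial D$, thereby avoiding your reflection-and-flattening across $\partial D$; otherwise the arguments coincide.
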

\begin{proof}
Extend the definition of $u_\ep$ to
${D} \times (2\underline\alpha - \overline\alpha, 2\overline\alpha- \underline\alpha)$ by reflecting along the boundary portions $D \times \{\underline\alpha, \overline\alpha\}$. 
For each $\al_0 \in [\underline\alpha, \overline\alpha]$, define
$$
U_\ep(x,\tau):= {u_\ep(x,\al_0 + \epsilon\tau)}.
$$
Then $U_\ep$ is a positive solution to
$$
A(\tau,\ep)\Delta_x U_\ep + U_{\ep,\tau\tau} - h_\ep(x) U_\ep = 0 
$$
in $D \times  \left( \ep^{-1}(2\underline\alpha - \overline\alpha - \al_0), \ep^{-1}(2\overline\alpha- \underline\alpha - \al_0)\right)$ and satisfies the Neumann boundary condition on $\partial D \times  \left( \ep^{-1}(2\underline\alpha - \overline\alpha - \al_0), \ep^{-1}(2\overline\alpha- \underline\alpha - \al_0)\right)$. Here $A(\tau,\ep)$ is a continuous function such that $\underline\al \leq A(\tau)\leq \overline\al$.
This, together with the boundedness of $\|h_\ep\|_{L^\infty(D)}$ (Lemma \ref{lem:1}), one may apply the Harnack inequality to $D \times(-1,1)$ and deduce the following.
\begin{claim}\label{claim101}
There exists  $C>0$ independent of $\alpha_0 \in [\underline\al,\overline\al]$ and $\ep$ such that
$$
\sup_{D \times(-1,1)} U_\ep(x,\tau) \leq C \inf_{D \times(-1,1)} U_\ep(x,\tau).
$$
\end{claim}
Next, we apply elliptic $L^p$ estimates to $U_\ep$ in $D \times(-1,1)$, so that
\begin{equation}\label{eq:102}
\sup_{x \in D} \left[ \left| U_{\ep,\tau}(x,0)\right| + \left| \nabla_x U_\ep(x,0)\right|\right] \leq C \left\|U_\ep \right\|_{L^p(D \times(-1,1))} \leq C\sup_{D \times(-1,1)} U_\ep(x,\tau).
\end{equation}
In view of Claim \ref{claim101}, we deduce for any $x \in D$,
$$
\left| U_{\ep,\tau}(x,0)\right| + \left| \nabla_x U_\ep(x,0)\right| \leq C \inf_{D \times(-1, 1)} U_\ep \leq CU_\ep(x,0).
$$
 i.e.
$
\ep \left| u_{\ep,\al}(x,\al_0)\right| + \left| \nabla_x u_\ep(x,\al_0)\right| \leq C u_\ep(x,\al_0)
$
for all $x \in D$. Since $C$ is independent of $x$, $\al_0$ and $\ep$, this proves Proposition \ref{prop:PS}.
\end{proof}

\section{Two eigenvalue problems} 
\label{sec:4}


\subsection{An Auxiliary Eigenvalue Problem}\label{subsect:4.1}

Consider, for each $\alpha>0$ and $\epsilon> 0$ the  eigenvalue problem (recall $h_\ep(x) = \hat{u}_\ep(x) - m(x)$)
\begin{equation}\label{eq:psi}
\left\{
\begin{array}{ll}
-\alpha {\Delta} \psi + h_\epsilon \psi = \sigma \psi \quad \text{ in }D,\\
\frac{\partial\psi}{\partial n} = 0 \quad \text{ on }\partial D \quad \text{ and }\quad \int_D \psi^2\,dx =
 \int_D \theta^2_{\underline\alpha}\,dx,
\end{array}
\right.
\end{equation}
and denote the principal eigenvalue and positive eigenfunction by $\sigma_\epsilon(\alpha)$ and $\psi_\epsilon(x,\alpha)$, respectively. At this point, we have not shown
how the two eigenvalue problems \eqref{eq:psi} and \eqref{eq:psistar} are related yet.


For each $\epsilon>0$, $\sigma_\epsilon(\alpha)$ is a smooth function of $\alpha>0$ (Proposition \ref{prop:E20}(ii)), and it
has a  Taylor expansion at $\alpha = \underline\alpha$: 
\begin{equation}\label{eq:taylorepsilon}
\sigma_\epsilon(\alpha)= \sigma_{0,\epsilon} + \sigma_{1,\epsilon}(\alpha - \underline\alpha) + \sigma_{2,\epsilon}(\alpha - \underline\alpha)^2 + O((\alpha - \underline\alpha)^3),
\end{equation}
where $\sigma_{0,\epsilon} = \sigma_\epsilon(\underline\alpha)$ and $\sigma_{k,\epsilon}=\frac{\partial^k}{\partial \alpha^k} \sigma_\epsilon(\underline\alpha)$.

\begin{lemma}\label{lem:4.1} Let $\sigma_{\epsilon}$
and $\psi_{\epsilon}$
be given as above.
\begin{itemize}

\item[{\rm (i)}] For each $k\geq 0$, $\frac{\partial^k}{\partial \al^k} \sigma_\ep (\al)$ is bounded uniformly in $\epsilon>0$ and $\alpha \in [\underline\al, \overline\al]$.

\item[{\rm (ii)}] For each $k \geq 0$ and $p>1$, $\frac{\partial^k}{\partial \al^k}\psi_{\epsilon}(\,\cdot\,,\alpha)$  is bounded in ${W^{2,p}(D)}$ (and hence $C(\overline D)$) uniformly in  $\ep> 0$ and $\alpha \in [\underline\al, \overline\al]$.

\item[{\rm (iii)}] There exists $c_0>0$ such that
$$
\liminf_{\ep \to 0} \frac{\partial \sigma_\ep}{\partial \al}(\alpha) \geq c_0 >0 \quad \text{ for all }\al \in [\underline\al,\overline\al].
$$
In particular, $\displaystyle \liminf_{\ep \to 0} \sigma_{1,\ep} = \liminf_{\ep \to 0} \frac{\partial \sigma_\ep}{\partial \al}(\underline\alpha) >0$.

\item[{\rm (iv)}] There exist positive  constants $c_1, c_2$ such that for all $\epsilon>0$,
$$c_1 \leq \psi_\epsilon(x,\alpha) \leq c_2\quad \text{ for all } x \in D\text{ and }\alpha \in [\underline\al, \overline\al].$$

\end{itemize}
\end{lemma}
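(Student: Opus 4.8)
\emph{Strategy.} The plan is to handle $k=0$ by hand and then induct on $k$, differentiating the eigenvalue problem \eqref{eq:psi} in $\al$ (legitimate by the smooth dependence of the principal eigenpair on $\al$, Proposition~\ref{prop:E20}(ii)): at each stage the Fredholm alternative for the self-adjoint operator $L_\ep:=-\al\Delta+h_\ep-\sigma_\ep(\al)$, whose kernel is spanned by $\psi_\ep$, produces $\partial_\al^k\sigma_\ep$ in terms of lower-order data, while a reduced inverse of $L_\ep$ controls $\partial_\al^k\psi_\ep$. Write $m_0:=\int_D\theta_{\underline\al}^2>0$, so that \eqref{eq:psi} carries the normalization $\int_D\psi_\ep^2=m_0$. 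The single structural input that makes the induction close uniformly in $\ep$ --- and the step I expect to be the main obstacle --- is a lower bound, uniform in $\ep>0$ and $\al\in[\underline\al,\overline\al]$, for the spectral gap $g_\ep(\al):=\sigma_\ep^{(2)}(\al)-\sigma_\ep(\al)$ between the second and first Neumann eigenvalues of $-\al\Delta+h_\ep$.

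\emph{Base case and part (iv).} The variational characterization gives $|\sigma_\ep(\al)|\leq\|h_\ep\|_{L^\infty(D)}$, bounded uniformly in $\ep$ by Lemma~\ref{lem:1}; this is $k=0$ of (i). Rewriting \eqref{eq:psi} as $\Delta\psi_\ep=\al^{-1}(h_\ep-\sigma_\ep(\al))\psi_\ep$, the coefficient is bounded in $L^\infty(D)$ uniformly in $\ep$ and $\al$ (using $\al\geq\underline\al>0$), so the Harnack inequality yields a uniform $C_H$ with $\max_{\bar D}\psi_\ep\leq C_H\min_{\bar D}\psi_\ep$; together with $\int_D\psi_\ep^2=m_0$ this forces $c_1\leq\psi_\ep\leq c_2$ on $\bar D\times[\underline\al,\overline\al]$ for uniform $c_1,c_2>0$, proving (iv). Then $-\al\Delta\psi_\ep=(\sigma_\ep(\al)-h_\ep)\psi_\ep$ has right-hand side bounded in $L^\infty(D)$, so elliptic $L^p$ estimates for the Neumann problem give the uniform $W^{2,p}(D)$ bound in (ii) for $k=0$.

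\emph{Inductive step and the gap estimate.} Assume the bounds in (i), (ii) hold for all orders $\leq k-1$. Differentiating \eqref{eq:psi} $k$ times in $\al$ gives
\begin{equation*}
L_\ep\,\psi_\ep^{(k)}=F_k:=k\,\Delta\psi_\ep^{(k-1)}+\sum_{j=1}^{k}\binom{k}{j}\sigma_\ep^{(j)}\psi_\ep^{(k-j)},\qquad \partial_n\psi_\ep^{(k)}=0,
\end{equation*}
together with $\sum_{j=0}^{k}\binom{k}{j}\int_D\psi_\ep^{(j)}\psi_\ep^{(k-j)}=0$ from the normalization. Pairing the equation with $\psi_\ep$ and using self-adjointness isolates the term $\sigma_\ep^{(k)}\psi_\ep$, yielding a formula for $\sigma_\ep^{(k)}$ in terms of quantities of order $\leq k-1$; hence $|\sigma_\ep^{(k)}|\leq C$ uniformly and $F_k$ is bounded in $L^p(D)$ uniformly. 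Writing $\psi_\ep^{(k)}=c_k\psi_\ep+\phi_k$ with $\phi_k\perp\psi_\ep$ in $L^2(D)$, the constant $c_k$ is bounded by the differentiated normalization identity, while $L_\ep\phi_k=F_k$ with $F_k\perp\psi_\ep$ gives $\|\phi_k\|_{L^2(D)}\leq g_\ep(\al)^{-1}\|F_k\|_{L^2(D)}$, uniformly bounded \emph{provided} $g_\ep(\al)$ is bounded below; a standard bootstrap in $-\al\Delta\psi_\ep^{(k)}=F_k+(\sigma_\ep(\al)-h_\ep)\psi_\ep^{(k)}$ then upgrades this to the uniform $W^{2,p}(D)$ bound of (ii). To prove the gap bound, suppose $g_{\ep_j}(\al_j)\to0$ for some $\ep_j>0$, $\al_j\to\al_*\in[\underline\al,\overline\al]$; by Lemma~\ref{lem:1}, $\{h_{\ep_j}\}$ is bounded in $L^\infty(D)$, so along a subsequence $h_{\ep_j}\rightharpoonup h_0$ weakly in $L^p(D)$, and since the relevant eigenfunctions are bounded in $H^1(D)$ (hence strongly precompact in $L^2(D)$), the potential terms $\int_D h_{\ep_j}\varphi^2$ pass to the limit and a min--max/lower-semicontinuity argument gives $\sigma_{\ep_j}(\al_j)\to\lambda_1(-\al_*\Delta+h_0)$ and $\sigma_{\ep_j}^{(2)}(\al_j)\to\lambda_2(-\al_*\Delta+h_0)$; but the principal Neumann eigenvalue of $-\al_*\Delta+h_0$ with $h_0\in L^\infty(D)$ is simple, so the limiting gap is positive, a contradiction.

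\emph{Part (iii).} Pairing the $\al$-derivative of \eqref{eq:psi} with $\psi_\ep$ and using $\int_D\psi_\ep\,\partial_\al\psi_\ep=0$ gives the identity
\begin{equation*}
\partial_\al\sigma_\ep(\al)=\frac{1}{m_0}\int_D|\nabla_x\psi_\ep(\cdot,\al)|^2\,dx\quad(\geq0),
\end{equation*}
so it suffices to bound $\int_D|\nabla_x\psi_\ep(\cdot,\al)|^2$ from below, uniformly in $\al\in[\underline\al,\overline\al]$, as $\ep\to0$. If this fails, take $\ep_j\to0$, $\al_j\to\al_*$ with $\int_D|\nabla_x\psi_{\ep_j}(\cdot,\al_j)|^2\to0$; by the $W^{2,p}(D)$ bound of (ii) with $p>N$, the functions $\psi_{\ep_j}(\cdot,\al_j)$ converge in $C^1(\bar D)$, necessarily to the positive constant $\sqrt{m_0/|D|}$, and $\Delta_x\psi_{\ep_j}(\cdot,\al_j)\rightharpoonup0$ weakly in $L^p(D)$. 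Solving \eqref{eq:psi} for $h_{\ep_j}=\sigma_{\ep_j}(\al_j)+\al_j\,\Delta_x\psi_{\ep_j}/\psi_{\ep_j}$ and letting $j\to\infty$ (a subsequence of the bounded sequence $\sigma_{\ep_j}(\al_j)$ converges, and the product of the weakly null $\Delta_x\psi_{\ep_j}$ with the uniformly convergent $1/\psi_{\ep_j}$ is weakly null) shows the corresponding weak limit of $h_{\ep_j}$ is constant on $D$, contradicting Lemma~\ref{lem:2}. Hence (iii) holds, and $\liminf_{\ep\to0}\sigma_{1,\ep}>0$ is the special case $\al=\underline\al$.
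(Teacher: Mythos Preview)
Your proof is correct and follows essentially the same strategy as the paper, whose substance is in Appendix~\ref{sec:B} (Propositions~\ref{prop:E20} and~\ref{prop:E30}): differentiate \eqref{eq:psi} in $\al$, use self-adjointness and simplicity of the principal eigenvalue to solve for the successive derivatives, and close the uniform estimates by a compactness argument in the weight $h_\ep$ (bounded in $L^\infty(D)$ by Lemma~\ref{lem:1}), with Lemma~\ref{lem:2} supplying the non-constancy needed for (iii). The only notable variations are organizational: you obtain (iv) directly from Harnack's inequality and the normalization $\int_D\psi_\ep^2=m_0$ rather than via compactness, and you isolate the spectral gap $\sigma_\ep^{(2)}(\al)-\sigma_\ep(\al)$ explicitly as the quantity to bound from below, whereas the paper packages the identical compactness/simplicity argument as a uniform bound on the inverse of the full linearization $D_{(\vphi,\lambda)}F$ (Lemma~\ref{claim:E6}); both routes are equivalent in content.
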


\begin{corollary}\label{rmk4.1b}
There exists $C>0$ independent of $\ep$ such that
$$
\left\|\frac{\psi_{\ep,\al}}{\psi_\ep}\right\|_{L^\infty(D\times(\underline\al , \overline\al))} + \left\|\frac{\psi_{\ep,\al\al}}{\psi_\ep}\right\|_{L^\infty(D\times(\underline\al , \overline\al))}  \leq C.
$$
\end{corollary}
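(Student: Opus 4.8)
The plan is to read the estimate off directly from Lemma \ref{lem:4.1}(ii) and (iv), with essentially no new analysis required. First I would fix $p>N$, so that the Sobolev embedding $W^{2,p}(D)\hookrightarrow C(\overline D)$ holds with an embedding constant depending only on $D$ and $p$. Applying Lemma \ref{lem:4.1}(ii) with $k=1$ gives a constant $C_1$, independent of $\ep>0$ and of $\al\in[\underline\al,\overline\al]$, with $\|\psi_{\ep,\al}(\cdot,\al)\|_{W^{2,p}(D)}\le C_1$, hence $\|\psi_{\ep,\al}(\cdot,\al)\|_{C(\overline D)}\le C_1'$; since this bound is uniform in $\al$, taking the supremum over $\al\in(\underline\al,\overline\al)$ yields $\|\psi_{\ep,\al}\|_{L^\infty(D\times(\underline\al,\overline\al))}\le C_1'$. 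The same argument with $k=2$ gives $\|\psi_{\ep,\al\al}\|_{L^\infty(D\times(\underline\al,\overline\al))}\le C_2'$, again with $C_2'$ independent of $\ep$.

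Next I would invoke Lemma \ref{lem:4.1}(iv), which provides a constant $c_1>0$, independent of $\ep$, with $\psi_\ep(x,\al)\ge c_1$ for all $x\in D$ and $\al\in[\underline\al,\overline\al]$. Dividing the pointwise bounds on $\psi_{\ep,\al}$ and $\psi_{\ep,\al\al}$ by this lower bound on $\psi_\ep$ then gives
$$
\left\|\frac{\psi_{\ep,\al}}{\psi_\ep}\right\|_{L^\infty(D\times(\underline\al,\overline\al))}+\left\|\frac{\psi_{\ep,\al\al}}{\psi_\ep}\right\|_{L^\infty(D\times(\underline\al,\overline\al))}\le \frac{C_1'+C_2'}{c_1}=:C,
$$
which is the asserted bound, with $C$ independent of $\ep$.

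The only point that requires a moment's care --- and it has already been handled upstream --- is that $\psi_{\ep,\al}$ and $\psi_{\ep,\al\al}$ are genuine derivatives in $\al$, rather than merely difference quotients: this relies on the smooth dependence of the principal eigenpair $(\sigma_\ep,\psi_\ep)$ of \eqref{eq:psi} on the parameter $\al$, which is precisely the property invoked just before Lemma \ref{lem:4.1} and established in Proposition \ref{prop:E20}, together with the uniform (in $\ep$ and $\al$) control of these $\al$-derivatives in $W^{2,p}(D)$ supplied by Lemma \ref{lem:4.1}(ii). Granting those inputs, the corollary is immediate; I do not expect any substantive obstacle here, the statement being a routine consequence of the lemma that precedes it.
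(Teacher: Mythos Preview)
Your proposal is correct and is exactly the argument the paper intends: the corollary is stated without proof immediately after Lemma \ref{lem:4.1}, and is a direct consequence of parts (ii) and (iv) of that lemma via the same numerator/denominator bound you give. There is nothing to add.
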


\begin{proof}[Proof of Lemma \ref{lem:4.1}] By the uniform boundedness of $\|h_\ep\|_{L^\infty(D)}$ in $\ep$ (Lemma \ref{lem:1}),
assertions  (i) and (ii) follow from Proposition \ref{prop:E30}(i).
To show (iii), it suffices to show, given any sequence $\ep_j \to 0$, and $\al_j \to \al_0 \in [\underline\al, \overline\al]$, $ 
\liminf_{j \to \infty}  \frac{\partial}{\partial \al} \sigma_{\ep_j}(\al_j) >0$. By Lemma \ref{lem:1}, we may assume without loss
of generality that for some $h_0 \in L^\infty(D)$, $h_{\ep_j} \rightharpoonup h_0$ weakly in $L^p(D)$ for all $p>1$. Then, in the notation of Appendix \ref{sec:B},  Proposition \ref{prop:E30}(ii) implies that
$$
\frac{\partial }{\partial \al}\sigma_{\ep_j}(\al_j) = \frac{\partial}{\partial \al} \lambda_1(\al_j,h_{\ep_j}) \to  \frac{\partial}{\partial \al} \lambda_1(\al_0,h_{0}).
$$
Since $h_0$ is non-constant (Lemma \ref{lem:2}), Proposition \ref{prop:E20} implies that the last expression is positive. This proves (iii).

For (iv),
%
suppose that along a sequence $\epsilon_j \to 0$ and $\al_j \to \al_0>0$,  either $\inf_{D} \psi_{\epsilon_j}(x,\al_j) \to 0$ or $\sup_{D} \psi_{\epsilon_j}(x,\al_j)  \to \infty$. By the uniform boundedness of $\|h_\epsilon\|_{L^\infty(D)}$  (Lemma \ref{lem:1}), we may assume without loss that $h_{\epsilon_j}$ converges weakly in $L^p(D)$ for all $p>1$.
Hence by Proposition \ref{prop:E30}(ii), $\psi_{\ep_j} = \vphi_1(\,\cdot\,;\al_j,h_{\ep_j}) $ converges to  $\vphi_1(\,\cdot\,; \al_0, h_0)$ uniformly in $D$, and the latter is a strictly positive function in  $C(\overline D)$. This is a contradiction, and proves (iv).
%
%
\end{proof}

\subsection{A Transformed Problem}

By the fact that $u_\epsilon(x,\alpha)$ is the principal eigenfunction, with zero
as the corresponding principal eigenvalue, of the problem
\begin{equation}\label{eq:ep1}
\left\{
\begin{array}{ll}
-\alpha {\Delta} \phi - \epsilon^2 \phi_{\alpha\alpha} + h_\epsilon(x) \phi =0 \quad \text{ in }\Omega = D \times (\underline\alpha, \overline\alpha),\\
\frac{\partial}{\partial n}\phi = 0 \quad \text{ on }\partial D \times(\underline\alpha,\overline\alpha),\quad \text{ and }\quad \phi_\alpha =0 \quad \text{ on }D \times \{\underline\alpha,\overline\alpha\},
\end{array}
\right.
\end{equation}
we have the following variational characterization
\begin{equation}\label{eq:var}
0 = \inf_{\tiny \begin{array}{cc} \phi \in H^1(\Omega)\\ \int_\Omega \phi^2=1\end{array}} J_\epsilon [\phi],
\end{equation}
where
\begin{equation}\label{eq:functional}
J_\epsilon[\phi] = \int_\Omega \left[\alpha |\nabla_x\phi|^2 + \epsilon^2 |\phi_\alpha|^2 + h_\epsilon \phi^2\right].
\end{equation}

Define
\begin{equation}\label{eq:s}
s_\epsilon = (\overline\alpha - \underline\alpha)/\epsilon^{2/3},
\end{equation}
and
\begin{equation}\label{eq:wwwww}
w_\epsilon(x,s):= u_\epsilon(x,\underline\alpha + \epsilon^{2/3}s)/\psi_\epsilon(x,\underline\alpha + \epsilon^{2/3}s) \quad \text{ for }x \in D, \, 0 \leq s \leq s_\ep,
\end{equation} where $\psi_\epsilon$ is given by $\eqref{eq:psi}$. Then $w_\epsilon$ satisfies
\begin{equation}\label{eq:w}
\left\{
\begin{array}{ll}
- \alpha \nabla_x \cdot (\psi^2_\epsilon \nabla_x w_\epsilon) - \epsilon^{2/3}(\psi^2_\epsilon w_{\epsilon,s})_s + \psi^2_\epsilon \left[ \sigma_\epsilon(\alpha) - \epsilon^2 \frac{\psi_{\epsilon,\alpha\alpha}}{\psi_\epsilon}\right]w_\epsilon   =0\\  \qquad \qquad\qquad\qquad\qquad\qquad\qquad\qquad\quad \qquad\qquad  \text{ in }D \times(0,s_\epsilon),\\
\frac{\partial}{\partial n}w_\epsilon = 0 \qquad \text{ on }\partial D \times(0,s_\epsilon),\\
w_{\epsilon,s} = - \epsilon^{2/3}\frac{\psi_{\epsilon,\alpha}}{\psi_\epsilon} w_{\epsilon} \quad \text{ on }D \times \{0,s_\epsilon\}.
\end{array}
\right.
\end{equation}
The  corresponding variational characterization can be written as
\begin{equation}\label{eq:var2}
-\sigma_{0,\epsilon} = - \sigma_\epsilon(\underline\alpha) = \inf_{\tiny \phi \in H^1(D \times(0,s_\epsilon))\setminus\{0\}} \frac{\tilde J_\epsilon[\phi]}{\int_D \int_0^{s_\epsilon} \psi^2_\epsilon \phi^2\,ds dx}
\end{equation}
where
\begin{align*}
\tilde J_\epsilon[\phi] =& \int_D \int_0^{s_\epsilon}  \psi^2_\epsilon\left[ \alpha|\nabla_x \phi|^2 + \epsilon^{2/3}|\phi_s|^2 + \left(\sigma_\epsilon(\alpha) - \sigma_\epsilon(\underline\alpha) - \epsilon^2 \frac{\psi_{\epsilon,\alpha\alpha}}{\psi_\epsilon} \right)\phi^2 \right]\,ds dx\\
& + \epsilon^{4/3}\int_D \left[ \psi_\epsilon \psi_{\epsilon,\alpha} \phi^2\right]_{s = 0}^{s_\epsilon}\,dx.
\end{align*}

\begin{lemma}\label{claim:var}
$0\leq -\sigma_{0,\epsilon} \leq O(\epsilon^{2/3})$.
\end{lemma}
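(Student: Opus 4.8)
The plan is to prove the two inequalities separately, using respectively the variational characterization \eqref{eq:var} of the full problem and the transformed characterization \eqref{eq:var2}.

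\emph{Lower bound $0\le -\sigma_{0,\epsilon}$.} I would first observe that $\alpha\mapsto\sigma_\epsilon(\alpha)$ is non-decreasing on $[\underline\alpha,\overline\alpha]$: by the Courant--Fischer formula, $\sigma_\epsilon(\alpha)=\inf_{v\in H^1(D)\setminus\{0\}}\big(\alpha\int_D|\nabla v|^2+\int_D h_\epsilon v^2\big)\big/\int_D v^2$, and for each fixed $v$ this Rayleigh quotient is affine in $\alpha$ with non-negative slope $\int_D|\nabla v|^2/\int_D v^2$, so its infimum over $v$ is non-decreasing in $\alpha$; in particular $\sigma_\epsilon(\alpha)\ge\sigma_\epsilon(\underline\alpha)=\sigma_{0,\epsilon}$ for all $\alpha\in[\underline\alpha,\overline\alpha]$. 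Next, for any $\phi\in H^1(\Omega)$ with $\int_\Omega\phi^2=1$ one has $\phi(\cdot,\alpha)\in H^1(D)$ for a.e.\ $\alpha$, and applying the Rayleigh bound for $\sigma_\epsilon(\alpha)$ slice by slice (and discarding the $\epsilon^2|\phi_\alpha|^2$ term) gives
$$ J_\epsilon[\phi]\ \ge\ \int_{\underline\alpha}^{\overline\alpha}\Big(\int_D\big[\alpha|\nabla_x\phi|^2+h_\epsilon\phi^2\big]\,dx\Big)d\alpha\ \ge\ \int_{\underline\alpha}^{\overline\alpha}\sigma_\epsilon(\alpha)\Big(\int_D\phi^2\,dx\Big)d\alpha\ \ge\ \sigma_{0,\epsilon}. $$
Taking the infimum over such $\phi$ and invoking \eqref{eq:var} yields $0\ge\sigma_{0,\epsilon}$, i.e.\ $-\sigma_{0,\epsilon}\ge 0$. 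Note this step needs no smallness of $\epsilon$.

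\emph{Upper bound $-\sigma_{0,\epsilon}\le O(\epsilon^{2/3})$.} Here I would feed a test function concentrated near $s=0$ into \eqref{eq:var2}. Fix once and for all a smooth profile $\rho\colon[0,\infty)\to(0,\infty)$ with $\int_0^\infty\rho^2$, $\int_0^\infty s\,\rho(s)^2\,ds$ and $\int_0^\infty(\rho')^2$ all finite (e.g.\ $\rho(s)=e^{-s}$), and take $\phi(x,s)=\rho(s)$ in \eqref{eq:var2}. Writing $\alpha=\underline\alpha+\epsilon^{2/3}s$ and using the uniform bounds $c_1\le\psi_\epsilon\le c_2$ (Lemma~\ref{lem:4.1}(iv)), $\|\psi_{\epsilon,\alpha}\|_{L^\infty}\le C$ (Lemma~\ref{lem:4.1}(ii)), $\|\psi_{\epsilon,\alpha\alpha}/\psi_\epsilon\|_{L^\infty}\le C$ (Corollary~\ref{rmk4.1b}), and $|\sigma_\epsilon(\alpha)-\sigma_\epsilon(\underline\alpha)|\le C(\alpha-\underline\alpha)=C\epsilon^{2/3}s$ (Lemma~\ref{lem:4.1}(i)), one estimates the four contributions to $\tilde J_\epsilon[\rho]$: the $\epsilon^{2/3}|\phi_s|^2$ term is $\le C\epsilon^{2/3}\int_0^\infty(\rho')^2=O(\epsilon^{2/3})$; the $(\sigma_\epsilon(\alpha)-\sigma_\epsilon(\underline\alpha))\phi^2$ term is $\le C\epsilon^{2/3}\int_0^\infty s\rho^2=O(\epsilon^{2/3})$; the $\epsilon^2(\psi_{\epsilon,\alpha\alpha}/\psi_\epsilon)\phi^2$ term is $O(\epsilon^2)$; and the boundary term $\epsilon^{4/3}\int_D[\psi_\epsilon\psi_{\epsilon,\alpha}\phi^2]_{0}^{s_\epsilon}\,dx$ is $O(\epsilon^{4/3})$. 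Hence $\tilde J_\epsilon[\rho]=O(\epsilon^{2/3})$. Meanwhile the denominator $\int_D\int_0^{s_\epsilon}\psi_\epsilon^2\rho^2\,ds\,dx\ge c_1^2|D|\int_0^{s_\epsilon}\rho^2\,ds$ is bounded below by a positive constant once $\epsilon$ is small, since $s_\epsilon=(\overline\alpha-\underline\alpha)\epsilon^{-2/3}\to\infty$. Dividing, $-\sigma_{0,\epsilon}\le\tilde J_\epsilon[\rho]\big/\!\int\psi_\epsilon^2\rho^2=O(\epsilon^{2/3})$.

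\emph{Main obstacle.} Neither half is deep; the only delicate point is the bookkeeping in the second step — one must check that $\rho$ can be chosen with all the relevant weighted integrals convergent, and it is exactly the scaling $s=(\alpha-\underline\alpha)\epsilon^{-2/3}$, already built into \eqref{eq:s}--\eqref{eq:var2}, for which the ``kinetic'' contribution $\epsilon^{2/3}\int(\rho')^2$ and the ``drift'' contribution $\epsilon^{2/3}\int s\rho^2$ are of the same order $\epsilon^{2/3}$ while the mass $\int_0^{s_\epsilon}\rho^2$ stays $O(1)$; any other power of $\epsilon$ would break this balance. The uniform positivity and $\alpha$-regularity of the eigenfunctions $\psi_\epsilon$ obtained in Section~\ref{sec:4} is what makes the weights $\psi_\epsilon^2$ harmless throughout. (An alternative route to the upper bound is to test \eqref{eq:var} directly with $\phi(x,\alpha)=\rho\big((\alpha-\underline\alpha)\epsilon^{-2/3}\big)\psi_\epsilon(x,\underline\alpha)$, using the eigenvalue equation for $\psi_\epsilon(\cdot,\underline\alpha)$ and the normalization $\int_D\psi_\epsilon(\cdot,\underline\alpha)^2=\int_D\theta_{\underline\alpha}^2$ together with the uniform $H^1$-bound on $\psi_\epsilon(\cdot,\underline\alpha)$ from Lemma~\ref{lem:4.1}(ii); the same scaling estimates then give $\sigma_{0,\epsilon}\ge -C\epsilon^{2/3}$.)
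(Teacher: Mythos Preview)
Your proposal is correct and follows essentially the same route as the paper. For the lower bound you slice $J_\epsilon$ in $\alpha$ and invoke the Rayleigh quotient for $\sigma_\epsilon(\alpha)$, while the paper compares the variational characterization of \eqref{eq:ep1} with that of the problem where $\alpha$ is frozen at $\underline\alpha$; these are the same argument phrased differently. For the upper bound both you and the paper plug a function of $s$ alone into \eqref{eq:var2}; the paper chooses a compactly supported cutoff $\eta$ (vanishing for $s\ge 2$) whereas you take a decaying profile like $\rho(s)=e^{-s}$, and the paper exploits the normalization $\int_D\psi_\epsilon^2\,dx\equiv\int_D\theta_{\underline\alpha}^2$ to kill the boundary term at $s=0$ exactly, while you simply bound it by $O(\epsilon^{4/3})$. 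Both choices work and the resulting estimates are identical.
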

\begin{proof}
First,
$\sigma_\epsilon(\underline\alpha)$
is the principal eigenvalue (with principal eigenfunction $\tilde\phi(x,\al) = \psi_\epsilon(x,\underline\alpha)$) of
$$
-\underline\alpha {\Delta} \tilde\phi - \epsilon^2 \tilde\phi_{\alpha\alpha} + h_\epsilon(x)\tilde\phi = \sigma\tilde\phi \quad \text{ in }\Omega =D \times (\underline\alpha, \overline\alpha)
$$
subject to homogeneous Neumann boundary condition, with a variation characterization analogous to \eqref{eq:var} and \eqref{eq:functional}. Since the integrand in \eqref{eq:functional} is monotone increasing in $\underline\al \leq \alpha \leq \underline\al$, $\sigma_\epsilon(\underline\al)$ is necessarily less than the principal eigenvalue of \eqref{eq:ep1}, which is zero. This proves $\sigma_{0,\epsilon} = \sigma_\epsilon(\underline\alpha) \leq 0$.

For the upper estimate, we use a  test function $\phi(x,s) = \eta(s)$ for \eqref{eq:var2}, where  $\eta:[0,\infty) \to [0,1]$ satisfies
$$
\eta(s)>0\quad \text{ for }0\leq s\leq 1,\quad \text{ and }\quad \eta(s)=0 \quad \text{ for }s \geq 2.
$$
Then upon using $\nabla_x \eta =0$,
 \eqref{eq:taylorepsilon}, and
 $$
 \int_D \left[ \psi_\epsilon \psi_{\epsilon,\alpha} \eta^2\right]_{s = 0}^{s_\epsilon}\,dx = \frac{\eta^2(0)}{2} \frac{\partial}{\partial\alpha} \left[\int_D \psi_\epsilon^2 \,dx \right]_{\alpha = \underline\alpha} = 0,
 $$
(since $\eta(s_\epsilon) = 0$ for $\epsilon$ small,
and by normalization we have $\int_D \psi_\epsilon^2(x,\alpha) \,dx \equiv 1
$ for all $\alpha$), we obtain from \eqref{eq:var2} that
\begin{align*}
-\sigma_{0,\epsilon} \leq \frac{\int_D \int_0^{2} \psi^2_\epsilon \left[ \epsilon^{2/3} |\eta_s|^2 + (\sigma_{1,\epsilon}\epsilon^{2/3}s + \sigma_{2,\epsilon}\epsilon^{4/3}s^2 + O(\epsilon^2))\eta^2\right]dsdx}{\int_D \int_0^{2} \psi^2_\epsilon \eta^2dsdx}.
\end{align*}
The conclusion follows from Lemma \ref{lem:4.1}(iv).
\end{proof}
\section{ Uniform limit of $\hat{u}_\epsilon$. } \label{subsect:hatu}

In this section, we show that $\hat{u}_\epsilon$ converges to $\theta_{\underline\alpha}$  in $C(\overline D)$. In particular, $h_\epsilon \to \theta_{\underline\al}-m$ in $C(\overline D)$.

Recall that $w_\epsilon$ is defined in \eqref{eq:wwwww}.

\begin{lemma}\label{lem::4.3}
For all $\beta >0$, there exists $C>0$ independent of $\ep$, such that
$$
w_\epsilon(x,s) \leq C \epsilon^{-1} e^{-\beta s} \quad \text{ for all }x \in D \text{ and }0 \leq s \leq s_\epsilon,
$$
where $s_\epsilon = (\overline\alpha - \underline\alpha)/\epsilon^{2/3}$.
\end{lemma}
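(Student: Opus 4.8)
## Proof proposal for Lemma 5.1

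The plan is to construct an explicit supersolution for the equation \eqref{eq:w} satisfied by $w_\epsilon$ and then invoke the comparison principle. The key structural feature I would exploit is the term $\psi_\epsilon^2 \sigma_\epsilon(\alpha) w_\epsilon$ in \eqref{eq:w}: writing $\alpha = \underline\alpha + \epsilon^{2/3}s$ and using the Taylor expansion \eqref{eq:taylorepsilon} together with $\sigma_{0,\epsilon} = \sigma_\epsilon(\underline\alpha) \leq 0$ (Lemma \ref{claim:var}) and the uniform lower bound $\liminf_{\epsilon\to 0}\sigma_{1,\epsilon} > 0$ (Lemma \ref{lem:4.1}(iii)), one gets that for $s$ bounded away from zero the coefficient $\sigma_\epsilon(\alpha) = \sigma_{0,\epsilon} + \sigma_{1,\epsilon}\epsilon^{2/3}s + O(\epsilon^{4/3}s^2)$ is essentially $\sigma_{1,\epsilon}\epsilon^{2/3}s$, which is positive and of order $\epsilon^{2/3}$. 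This gives a good (positive, lower-order) zeroth-order term that will dominate when tested against a decaying exponential in $s$.

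First I would fix $\beta > 0$ and try the ansatz $\overline{w}(x,s) = K e^{-\beta s}$ for a large constant $K$ to be chosen (independent of $\epsilon$). Plugging this into the operator in \eqref{eq:w}: the $x$-derivative term vanishes, $\epsilon^{2/3}(\psi_\epsilon^2 \overline{w}_s)_s$ contributes a term of order $\epsilon^{2/3}$ times quantities involving $\psi_{\epsilon,s}/\psi_\epsilon$ and $\beta^2$ — all controlled by Lemma \ref{lem:4.1}(iv) and the fact that $\partial_s = \epsilon^{2/3}\partial_\alpha$ makes $\psi_{\epsilon,s}/\psi_\epsilon = \epsilon^{2/3}\psi_{\epsilon,\alpha}/\psi_\epsilon$ small by Corollary \ref{rmk4.1b} — and the zeroth order term is $\psi_\epsilon^2[\sigma_\epsilon(\alpha) - \epsilon^2 \psi_{\epsilon,\alpha\alpha}/\psi_\epsilon]\overline{w}$. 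The upshot is that, after dividing through by $\epsilon^{2/3}$, the operator applied to $\overline{w}$ has the sign of $-\beta^2 + (\text{small}) + \sigma_{1,\epsilon}s + (\text{small}\cdot s^2) + \ldots$, which is nonnegative once $s \geq s_0$ for some fixed $s_0 = s_0(\beta)$ independent of $\epsilon$. On the remaining strip $D\times[0,s_0]$ I would use the crude bound $w_\epsilon \leq C\epsilon^{-1}$ coming from Lemma \ref{lem:uinfty} (which bounds $u_\epsilon$ by $C\epsilon^{-1}$) together with the uniform positive lower bound $\psi_\epsilon \geq c_1$ from Lemma \ref{lem:4.1}(iv); choosing $K$ comparable to $\epsilon^{-1}e^{\beta s_0}$ makes $\overline{w} \geq w_\epsilon$ on the slab $s = s_0$ and at $s = 0$. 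The prefactor $\epsilon^{-1}$ in the statement is exactly what this slab estimate forces.

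Then I would handle the Neumann-type boundary conditions: on $\partial D \times (0,s_\epsilon)$ both $w_\epsilon$ and $\overline{w}$ satisfy the homogeneous Neumann condition, and on $D \times \{0, s_\epsilon\}$ the boundary condition $w_{\epsilon,s} = -\epsilon^{2/3}(\psi_{\epsilon,\alpha}/\psi_\epsilon)w_\epsilon$ is a lower-order (Robin-type) perturbation whose coefficient is $O(\epsilon^{2/3})$ by Corollary \ref{rmk4.1b}; at $s = s_\epsilon$ this is absorbed because $e^{-\beta s_\epsilon}$ is exponentially small, and at $s = 0$ it is dominated by the slab estimate. With the supersolution valid on $D\times[s_0, s_\epsilon]$ and the ordering $\overline{w}\geq w_\epsilon$ secured on the parabolic-type boundary $\partial(D\times[s_0,s_\epsilon])$, the maximum principle for the uniformly elliptic operator in \eqref{eq:w} (its coefficients are uniformly controlled by Lemmas \ref{lem:1} and \ref{lem:4.1}) yields $w_\epsilon \leq \overline{w} = Ke^{-\beta s}$ throughout, and combining with the trivial bound on the slab gives the claim with a single constant $C = C(\beta)$.

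The main obstacle I anticipate is bookkeeping the various $\epsilon$-powers carefully enough to see that the "bad" terms (the $\beta^2$ from the exponential, the $\sigma_{2,\epsilon}\epsilon^{4/3}s^2$ curvature term, the $\epsilon^2\psi_{\epsilon,\alpha\alpha}/\psi_\epsilon$ term) are genuinely dominated by $\sigma_{1,\epsilon}\epsilon^{2/3}s$ uniformly in $\epsilon$ for $s\geq s_0$ — in particular one must check that the quadratic term $\sigma_{2,\epsilon}\epsilon^{4/3}s^2$ does not overwhelm the linear term for $s$ comparable to $s_\epsilon \sim \epsilon^{-2/3}$, where $\epsilon^{4/3}s^2 \sim \epsilon^{4/3}\cdot\epsilon^{-4/3} = O(1)$ and $\epsilon^{2/3}s \sim \epsilon^{2/3}\cdot \epsilon^{-2/3} = O(1)$ are comparable; here one needs $|\sigma_{2,\epsilon}|$ to be bounded (Lemma \ref{lem:4.1}(i)) and $\sigma_{1,\epsilon}$ bounded below, and one may need to choose $s_0$ and possibly restrict attention to $s \leq \delta s_\epsilon$ versus $s \geq \delta s_\epsilon$ separately, using $\sigma_\epsilon(\alpha)$'s monotonicity (Lemma \ref{lem:4.1}(iii) gives $\partial_\alpha \sigma_\epsilon \geq c_0 > 0$ on all of $[\underline\alpha,\overline\alpha]$, so in fact $\sigma_\epsilon(\alpha) - \sigma_\epsilon(\underline\alpha) \geq c_0(\alpha - \underline\alpha) = c_0\epsilon^{2/3}s$ directly, which cleanly sidesteps the Taylor-remainder issue). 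That last observation — using the global derivative lower bound rather than the second-order Taylor expansion — is what makes the argument go through uniformly, and I would organize the proof around it.
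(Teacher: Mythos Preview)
Your overall strategy matches the paper's: a crude bound $w_\epsilon \leq C\epsilon^{-1}$ on a slab $[0,s_0]$, then an exponential supersolution on $[s_0,s_\epsilon]$, with the differential inequality driven by the positivity of $\sigma_\epsilon(\alpha)-\sigma_\epsilon(\underline\alpha)$. Your remark that the global bound $\partial_\alpha\sigma_\epsilon\geq c_0$ cleanly avoids the Taylor-remainder headache is exactly what the paper does in its Claim preceding the supersolution construction.

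There is, however, a genuine gap at the boundary $s=s_\epsilon$. The supersolution condition there reads
\[
\overline{w}_s(x,s_\epsilon)\;\geq\;-\epsilon^{2/3}\frac{\psi_{\epsilon,\alpha}(x,\overline\alpha)}{\psi_\epsilon(x,\overline\alpha)}\,\overline{w}(x,s_\epsilon),
\]
and for $\overline{w}=Ke^{-\beta s}$ the left side equals $-\beta\,\overline{w}(x,s_\epsilon)$. Dividing by $\overline{w}>0$ leaves $-\beta\geq O(\epsilon^{2/3})$, which is false for any fixed $\beta>0$. The exponential smallness of $e^{-\beta s_\epsilon}$ is irrelevant: the condition is a \emph{ratio} $\overline{w}_s/\overline{w}$, and that ratio is the fixed negative number $-\beta$ regardless of magnitude. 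So your pure decaying exponential is \emph{not} a supersolution at the far boundary, and the maximum principle cannot be invoked as stated.

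The paper repairs this by taking
\[
\overline{W}(x,s)=\Bigl(\sup_{D\times[0,s_0]}w_\epsilon\Bigr)\left[e^{-\beta(s-s_0)}+e^{\beta\left(s-\tfrac{3}{2}s_\epsilon\right)}\right].
\]
The second (growing) exponential is negligible on most of $[s_0,s_\epsilon]$ since $e^{\beta(s-\tfrac{3}{2}s_\epsilon)}\leq e^{-\beta s_\epsilon/2}$, so it does not spoil the final estimate; but at $s=s_\epsilon$ it dominates the first term and flips the sign of $\overline{W}_s/\overline{W}$ to $+\beta+o(1)$, which comfortably exceeds the $O(\epsilon^{2/3})$ Robin coefficient. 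This small modification is the missing ingredient in your proposal.
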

\begin{proof}
First, we derive a rough upper bound of $w_\epsilon$ from Lemma \ref{lem:uinfty}.
\begin{claim}\label{claim:4.3.1}
There exists $C>0$ such that
$$
\sup_{D \times(\underline\alpha, \overline\alpha)} w_\epsilon \leq C \epsilon^{-1}.
$$
\end{claim}


By definition, $\sup w_\epsilon \leq  (\sup u_\epsilon)/(\inf \psi_\epsilon)$, and the claim follows from
the upper bound of $u_\epsilon$ (Lemma \ref{lem:uinfty}) and
Lemma \ref{lem:4.1}(iv).

 Next, we construct a supersolution to prove the exponential decay.
 \begin{claim}\label{claim:797}
For each $\beta>0$, there exists $s_0>0$ independent of $\epsilon$ such that
$$
\frac{\sigma_\epsilon(\alpha) - \sigma_\epsilon(\underline\alpha)}{\epsilon^{2/3}} + \frac{\sigma_\epsilon(\underline\alpha)}{\epsilon^{2/3}} - \epsilon^{4/3} \frac{\psi_{\epsilon,\alpha\alpha}(x,\alpha)}{\psi_\epsilon(x,\alpha)} \geq 2\beta^2 \quad \text{ for all }\alpha \in [\underline\alpha + \epsilon^{2/3}s_0 ,\overline\alpha].
$$
\end{claim}
To see the claim, we note that since $\sigma_\epsilon$ is monotone increasing  in $\alpha$ (Proposition \ref{prop:E20}(iii)), for $\alpha = \underline\alpha + \epsilon^{2/3}s$ and  $s \geq s_0$,

\begin{align*}
\frac{\sigma_\epsilon(\alpha) - \sigma_\epsilon(\underline\alpha)}{\epsilon^{2/3}} + \frac{\sigma_\epsilon(\underline\alpha)}{\epsilon^{2/3}}
&\geq  \frac{\sigma_\epsilon(\underline \alpha + \epsilon^{2/3}s_0) - \sigma_\epsilon(\underline\alpha)}{\epsilon^{2/3}} + \frac{\sigma_{0,\epsilon}}{\epsilon^{2/3}}.
\end{align*}
By \eqref{eq:taylorepsilon} and Lemma \ref{lem:4.1}(i),
 $$
 \liminf_{\ep \to 0}\frac{\sigma_\epsilon(\underline \alpha + \epsilon^{2/3}s_0) - \sigma_\ep(\underline\al)}{\ep^{2/3}}
 \geq (\liminf_{\ep \to 0} \sigma_{1,\ep}) s_0.
 $$
Taking also
Lemma  \ref{claim:var}
 and Corollary \ref{rmk4.1b}
 into account, we conclude that for $\alpha = \underline\alpha + \epsilon^{2/3}s$ and  $s \geq s_0$,
 $$
\liminf_{\ep \to 0}  \left[
\frac{\sigma_\epsilon(\alpha) - \sigma_\epsilon(\underline\alpha)}{\epsilon^{2/3}} + \frac{\sigma_\epsilon(\underline\alpha)}{\epsilon^{2/3}}
 - \ep^{4/3} \frac{\psi_{\ep,\al\al}(x,\al)}{\psi_\ep(x,\al)}
\right]
\geq (\liminf_{\ep \to 0} \sigma_{1,\ep}) s_0 - C.
 $$
 Since $\liminf_{\ep \to 0} \sigma_{1,\ep} >0$ by Lemma \ref{lem:4.1}(iii),  Claim \ref{claim:797} holds by choosing $s_0$ large. 

\begin{claim}\label{claim:supersol}
For each $\beta>0$, there exists $s_0>0$ independent of $\epsilon$ and a supersolution
$$
\overline{W}(x,s):=\left(\sup_{\tiny \begin{array}{cc} x \in D\\ 0 \leq s \leq s_0 \end{array} }  w_{\epsilon} \right) [\exp(-\beta(s-s_0)) + \exp(\beta(s - (3/2)s_\epsilon))],
$$
defined on $D \times (s_0,s_\epsilon)$ such that
\begin{equation}\label{eq:upperW}
\left\{
\begin{array}{ll}
-\frac{\alpha}{\epsilon^{2/3} \psi^2_\epsilon} \nabla_x \cdot (\psi^2_\epsilon \nabla_x \overline{W}) - \frac{1}{\psi^2_\epsilon} [\psi^2_\epsilon \overline{W}_s]_s \\
\quad + \left( \frac{\sigma_\epsilon(\alpha) - \sigma_\epsilon(\underline\alpha)}{\epsilon^{2/3}} + \frac{\sigma_\epsilon(\underline\alpha)}{\epsilon^{2/3}} - \epsilon^{4/3}\frac{\psi_{\epsilon,\alpha\alpha}}{\psi_\epsilon}\right)\overline{W} \geq 0&\textup{ in }D \times (s_0,s_\epsilon),\\
\frac{\partial \overline{W}}{\partial n}=0 &\textup{ on }\partial D \times(s_0,s_\epsilon),\\
\overline{W}(x,s_0) \geq {w}_\epsilon(x,s_0) &\textup{ for } x\in D,\\
\overline{W}_s(x, s_\epsilon) \geq -\epsilon^{2/3} \frac{ \psi_{\epsilon, \alpha}(x,\overline\alpha)}{\psi_\epsilon(x,\overline\alpha)}\overline{W}(x,s_\epsilon) &\textup{ for }x \in D,
\end{array}
\right.
\end{equation}
where $s_\epsilon = (\overline\alpha-\underline\alpha)/\epsilon^{2/3}$.
\end{claim}
To show the differential inequality, note that the term involving derivatives in $x$ vanishes, and that by Claim \ref{claim:797} and Corollary \ref{rmk4.1b},
\begin{align*}
&-\frac{1}{\psi^2_\epsilon}\left( \psi^2_\epsilon \overline{W}_s\right)_s + \left( \frac{\sigma_\epsilon(\alpha) - \sigma_\epsilon(\underline\alpha)}{\epsilon^{2/3}} + \frac{\sigma_\epsilon(\underline\alpha)}{\epsilon^{2/3}} - \epsilon^{4/3}\frac{\psi_{\epsilon,\alpha\alpha}}{\psi_\epsilon}\right)\overline{W}\\
&\geq -\overline{W}_{ss} + o(1)\overline{W}_s + 2\beta^2 \overline{W}\\
&= (-\beta^2 + o(1) \beta + 2\beta^2)\overline{W} \geq 0.
\end{align*}
It remains to check the boundary condition on $D \times \{s_\epsilon\}$, as the rest follows by definition.
Note that $ s_\epsilon \to \infty$ as $\ep \to \infty$, so that $\exp(-\beta (s_\epsilon - s_0)) \ll \exp(-\beta s_\epsilon/2)$. We therefore have
 $$
  \frac{\overline W_s(x,s_\epsilon)}{\overline W(x,s_\epsilon)} + \epsilon^{2/3}\frac{\psi_{\epsilon,\alpha}(x,\overline\alpha) }{ \psi_\epsilon(x,\overline\alpha)}
= \frac{-\beta \exp(-\beta(s_\ep - s_0)) + \beta \exp(-\beta s_\ep/2)}{ \exp(-\beta(s_\ep - s_0)) +  \exp(-\beta s_\ep/2)} + O(\ep^{2/3})
$$
which converges to a positive constant $\beta$ uniformly for $x \in D$. This proves the claim.

Now, we claim that
\begin{equation}\label{eq:supersolution}
{w}_\epsilon \leq \overline{W}\quad \text{ in }D \times(0, s_\epsilon).
\end{equation}
By definition, it is easy to see that ${w}_\epsilon \leq \overline{W}$ in $D \times[0,s_0]$. That the inequality holds in $D \times (s_0,s_\epsilon)$ is due to the fact that  $\overline{W}$ is a strict positive supersolution of the linear problem \eqref{eq:upperW} with homogeneous Dirichlet boundary condition on $D \times \{s_0\}$, and Neumann condition on the remaining boundary portions. Standard maximum principle applies and shows that the quotient $\frac{\overline{W} - {w}_\epsilon}{\overline{W}}$ is non-negative. (See, e.g. \cite[p. 48]{BNV}.)

Finally, we obtain Lemma \ref{lem::4.3} by combining Claim \ref{claim:4.3.1} and \eqref{eq:supersolution}.
\end{proof}

\begin{lemma}\label{lem::4.4}
Let $v_\epsilon$ be given by \eqref{eq:v0}, then
$$
\sup_{x \in D}\left| v_\epsilon(x) - \underline\alpha \hat{u}_\epsilon(x)\right| \to 0.
$$
\end{lemma}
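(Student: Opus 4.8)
The plan is to exploit the fact that the weight $\alpha-\underline\alpha$ appearing in the difference $v_\epsilon-\underline\alpha\hat u_\epsilon$ is small exactly on the region where $u_\epsilon$ is not negligible, and to quantify this via the exponential decay estimate of Lemma \ref{lem::4.3}. Writing out the definitions,
\begin{equation*}
v_\epsilon(x)-\underline\alpha\,\hat u_\epsilon(x)=\int_{\underline\alpha}^{\overline\alpha}(\alpha-\underline\alpha)\,u_\epsilon(x,\alpha)\,d\alpha,
\end{equation*}
so the first step is the change of variables $\alpha=\underline\alpha+\epsilon^{2/3}s$, which turns the right-hand side into $\epsilon^{4/3}\int_0^{s_\epsilon}s\,u_\epsilon(x,\underline\alpha+\epsilon^{2/3}s)\,ds$, where $s_\epsilon=(\overline\alpha-\underline\alpha)/\epsilon^{2/3}$.

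Next I would substitute $u_\epsilon(x,\underline\alpha+\epsilon^{2/3}s)=\psi_\epsilon(x,\underline\alpha+\epsilon^{2/3}s)\,w_\epsilon(x,s)$ (the definition \eqref{eq:wwwww} of $w_\epsilon$), use the uniform upper bound $\psi_\epsilon\le c_2$ from Lemma \ref{lem:4.1}(iv), and then invoke Lemma \ref{lem::4.3} with, say, $\beta=1$ to get $w_\epsilon(x,s)\le C\epsilon^{-1}e^{-s}$ for all $x\in D$ and $0\le s\le s_\epsilon$. Combining these bounds,
\begin{equation*}
\bigl|v_\epsilon(x)-\underline\alpha\,\hat u_\epsilon(x)\bigr|\le c_2\,\epsilon^{4/3}\int_0^{s_\epsilon}s\,\psi_\epsilon\,w_\epsilon\,ds\le c_2 C\,\epsilon^{4/3}\epsilon^{-1}\int_0^{\infty}s\,e^{-s}\,ds=c_2 C\,\epsilon^{1/3},
\end{equation*}
which is independent of $x$ and tends to $0$ as $\epsilon\to 0$; this gives the claim.

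The only point requiring care is bookkeeping of the powers of $\epsilon$: the $\epsilon^{2/3}$ from the weight $\alpha-\underline\alpha$ and the $\epsilon^{2/3}$ from rescaling the measure $d\alpha$ combine to $\epsilon^{4/3}$, which beats the $\epsilon^{-1}$ blow-up of $w_\epsilon$ from Lemma \ref{lem::4.3}, leaving the net positive power $\epsilon^{1/3}$. There is no substantial obstacle here: all the analytic work — the uniform bounds on $\psi_\epsilon$ and the exponential decay of $w_\epsilon$ in $s$ — has already been carried out, and this lemma is essentially a corollary extracting a uniform-in-$x$ consequence from those estimates. (It will feed into upgrading the weak convergence of $h_\epsilon$ to the uniform convergence $\hat u_\epsilon\to\theta_{\underline\alpha}$ via equation \eqref{eq:v}.)
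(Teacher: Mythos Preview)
Your proof is correct and uses the same key ingredient as the paper, namely the exponential decay estimate of Lemma \ref{lem::4.3} combined with the uniform bound on $\psi_\epsilon$. The paper splits the integral at $\alpha=\underline\alpha+\sqrt{\epsilon}\,\delta_1$ and controls the near-boundary piece by $\delta\hat u_\epsilon$, whereas you avoid this split by carrying the weight $(\alpha-\underline\alpha)=\epsilon^{2/3}s$ through the change of variables; your route is slightly more direct and even yields the explicit rate $O(\epsilon^{1/3})$. (Minor typo: in your displayed chain the first inequality should read $=\epsilon^{4/3}\int_0^{s_\epsilon}s\,\psi_\epsilon w_\epsilon\,ds$ without the prefactor $c_2$, which enters only after bounding $\psi_\epsilon\le c_2$.)
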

\begin{proof}
Given $\epsilon$, take $\delta =  \sqrt{\epsilon} \delta_1$, where $\delta_1$ is given by Lemma \ref{lem:1}.
\begin{align*}
\left| v_\epsilon(x) - \underline\alpha \hat{u}_\epsilon(x)\right|
&=  \left| \int_{\underline\alpha}^{\overline\alpha}(\alpha - \underline\alpha) u_\epsilon(x,\alpha)\,d\alpha\right|\\
&\leq  \delta \left|\int_{\underline\alpha}^{\underline\alpha + \delta} u_\epsilon(x,\alpha)\,d\alpha \right| + (\overline\alpha - \underline\alpha) \left|\int^{\overline\alpha}_{\underline\alpha + \delta} u_\epsilon(x,\alpha)\,d\alpha \right|\\
&\leq \delta \hat{u}_\epsilon(x) + C \int_{\underline\al + \delta}^{\overline\al} \ep^{-1} \exp(-\beta (\alpha-\bar\alpha) / \ep^{2/3})\,d\al\\
&\leq \sqrt{\epsilon} + o(1)
\end{align*}
where we have used Lemma \ref{lem:4.1}(iv) and  Lemma \ref{lem::4.3} in the second last inequality. This proves the lemma.
\end{proof}
\begin{proposition}\label{prop:vu}
$\hat{u}_\epsilon \to \theta_{\underline\alpha}$ in $C(\overline D)$, where $\theta_{\underline\alpha}$ is the unique positive solution of \eqref{eq:theta}. In particular, $h_\ep \to \theta_{\underline\al} - m$ in $C(\overline D)$.
\end{proposition}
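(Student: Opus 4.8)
The plan is to combine the previous estimates to pass to the limit in the equation \eqref{eq:v} satisfied by $v_\epsilon$. By Remark \ref{rmk:0}, $\|v_\epsilon\|_{W^{2,p}(D)}$ is bounded uniformly in $\epsilon$, so along any sequence $\epsilon_j \to 0$ we may extract a subsequence (not relabeled) such that $v_{\epsilon_j} \to v_0$ in $C(\overline D)$ and weakly in $W^{2,p}(D)$ for each $p>1$. By Lemma \ref{lem:1}, $\hat u_{\epsilon_j}$ is bounded in $L^\infty(D)$ uniformly, so after further extraction $\hat u_{\epsilon_j} \rightharpoonup \hat u_0$ weakly in $L^p(D)$ for all $p>1$; write $h_0 = \hat u_0 - m$. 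First I would invoke Lemma \ref{lem::4.4}, which gives $\sup_D |v_{\epsilon_j} - \underline\alpha\,\hat u_{\epsilon_j}| \to 0$, hence $\underline\alpha\,\hat u_{\epsilon_j} \to v_0$ in $C(\overline D)$; in particular $\hat u_{\epsilon_j}$ converges \emph{strongly} in $C(\overline D)$ to $\hat u_0 := v_0/\underline\alpha$, so $\hat u_0$ is continuous and, by the two-sided bound in Lemma \ref{lem:1}, satisfies $\delta_1 \le \hat u_0 \le 1/\delta_1$ on $D$.

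Next I would pass to the limit in \eqref{eq:v}: since $\hat u_{\epsilon_j} \to \hat u_0$ uniformly, the reaction term $(m - \hat u_{\epsilon_j})\hat u_{\epsilon_j} \to (m - \hat u_0)\hat u_0$ uniformly, and the uniform $W^{2,p}$ bound lets one pass to the limit in the distributional (or weak $W^{2,p}$) formulation of $\Delta v_{\epsilon_j} + (m - \hat u_{\epsilon_j})\hat u_{\epsilon_j} = 0$ with Neumann data. Therefore $v_0 = \underline\alpha\,\hat u_0$ solves
\begin{equation*}
\underline\alpha \Delta \hat u_0 + (m(x) - \hat u_0)\hat u_0 = 0 \ \text{ in }D, \qquad \frac{\partial \hat u_0}{\partial n} = 0 \ \text{ on }\partial D,
\end{equation*}
and elliptic regularity promotes $\hat u_0$ to a classical positive solution. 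By uniqueness of the positive solution of \eqref{eq:theta}, $\hat u_0 \equiv \theta_{\underline\alpha}$. Since the limit is independent of the chosen sequence, the full family satisfies $\hat u_\epsilon \to \theta_{\underline\alpha}$ in $C(\overline D)$, and consequently $h_\epsilon = \hat u_\epsilon - m \to \theta_{\underline\alpha} - m$ in $C(\overline D)$.

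The one delicate point — and the reason Lemmas \ref{lem::4.3} and \ref{lem::4.4} were established first — is upgrading the a priori weak-$L^p$ convergence of $\hat u_\epsilon$ to uniform convergence: a posteriori this is automatic once we know $v_0 = \underline\alpha\,\hat u_0$ with $v_0 \in C(\overline D)$, but it is essential, because the limiting equation forces $\hat u_0$ to be a nonconstant (indeed nonconstant by Lemma \ref{lem:2}) classical solution, and the whole argument would collapse if $\hat u_0$ were only an $L^p$ function. A minor technical care is needed in justifying the Neumann boundary condition in the limit, which follows from testing \eqref{eq:v} against arbitrary $\varphi \in C^1(\overline D)$ and using the weak $W^{2,p}$ convergence. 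I expect no serious obstacle beyond bookkeeping, since all the hard analysis (the $L^\infty$ and exponential-decay estimates, and the nonconstancy of $h_0$) is already in hand.
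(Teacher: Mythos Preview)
Your proposal is correct and follows essentially the same approach as the paper: extract a subsequence along which $v_{\epsilon_j}$ converges in $C(\overline D)$ (via Remark \ref{rmk:0}), upgrade $\hat u_{\epsilon_j}$ to uniform convergence using Lemma \ref{lem::4.4}, pass to the limit in the weak formulation of \eqref{eq:v} to identify $\hat u_0$ as a positive solution of \eqref{eq:theta}, and conclude by uniqueness and independence of the subsequence. The only cosmetic difference is that the paper writes out the weak formulation \eqref{eq:weaksol} explicitly with test functions $z\in C^\infty(\bar D)$, whereas you appeal to weak $W^{2,p}$ convergence; both are equivalent here.
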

\begin{proof}
By Remark \ref{rmk:0} and Lemma \ref{lem::4.4}, we deduce that up to a subsequence $\ep_j \to 0$, both $\hat{u}_{\epsilon_j}$ and $v_\epsilon/\underline\alpha$ converges uniformly in $D$ to some $\hat{u}_0 \in W^{2,p}(D)$. We claim that $\hat{u}_0 $ is a (strong and therefore classical) solution of \eqref{eq:theta}, i.e. for each $z(x) \in C^\infty(\bar D)$,
\begin{equation}\label{eq:weaksol}
\underline\alpha \int_D [\hat{u}_0\Delta_x z + \hat{u}_0(m - \hat{u}_0) z ]\,dx - \underline\alpha\int_{\partial D} \hat{u}_0\frac{\partial z}{\partial n} \,dx =0.
\end{equation}
To show \eqref{eq:weaksol}, multiply \eqref{eq:v} by a test function $z(x)$ and integrate by parts, using the Neumann boundary condition of $v_\epsilon$, we obtain
$$
\int_D [v_\epsilon\Delta_x z + \hat{u}_{\epsilon_j}(m - \hat{u}_{\epsilon_j}) z ]\,dx - \int_{\partial D} v_\epsilon\frac{\partial z}{\partial n}\,dx =0.
$$
Then one can pass to the limit to obtain \eqref{eq:weaksol} by invoking Lemma \ref{lem::4.4}. By the lower estimate in Lemma \ref{lem:1}, there exists $\delta_1>0$ such that $\hat{u}_0(x) \geq \delta_1$ for all $x \in D$. Hence $\hat{u}_0$ is the unique positive solution of \eqref{eq:theta},
i.e. $ \hat{u}_{\ep_j} \rightharpoonup \theta_{\underline\al}$ in $C(\overline D)$. Since the limit is independent of subsequences,  we deduce that  that $\hat{u}_\ep \to \theta_{\underline\al}$ as $\epsilon \to 0$ (not just along subsequences $\ep_j \to 0$).
This proves the proposition.
\end{proof}
%
\begin{corollary}\label{cor:4.9}
Let $\sigma_\epsilon(\alpha)$ and $\psi_\epsilon(x,\alpha)$ be the principal eigenvalue and eigenfunction of \eqref{eq:psi}, and let $\sigma^*(\alpha)$ and $\psi^*(x,\alpha)$ be those of \eqref{eq:psistar}. Then as $\epsilon \to 0$,
$\sigma_\epsilon \to \sigma^*$ in $C^k([\underline\alpha,\overline\alpha])$ for all $k$, and $\psi_\epsilon(\cdot,\alpha) \to \psi^*(\cdot,\alpha)$  in $C^k([\underline\al, \overline\al]; W^{2,p}(D))$. In particular,
\begin{equation}\label{eq:sigmastar}
\sigma_{0,\epsilon} \to \sigma^*_{0}:= \sigma^*(\underline\alpha)\quad \text{ and }\quad \sigma_{1,\epsilon} \to \sigma^*_{1}:= \frac{\partial \sigma^*}{\partial\alpha} (\underline\alpha)>0.
\end{equation}
\end{corollary}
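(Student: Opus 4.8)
The plan is to deduce Corollary \ref{cor:4.9} from the convergence $h_\epsilon \to \theta_{\underline\alpha} - m$ in $C(\overline D)$ (Proposition \ref{prop:vu}) together with the smooth dependence of the principal eigenvalue and eigenfunction on the zeroth-order coefficient, which is precisely the content of Proposition \ref{prop:E30} in Appendix \ref{sec:B}. The key observation is that \eqref{eq:psi} is the eigenvalue problem $-\alpha\Delta\psi + h_\epsilon\psi = \sigma\psi$ with the fixed normalization $\int_D\psi^2 = \int_D\theta_{\underline\alpha}^2$, while \eqref{eq:psistar} is the \emph{same} problem with $h_\epsilon$ replaced by its limit $\theta_{\underline\alpha} - m$ and the same normalization. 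So in the notation of Appendix \ref{sec:B} we have $\sigma_\epsilon(\alpha) = \lambda_1(\alpha, h_\epsilon)$ and $\psi_\epsilon(\cdot,\alpha) = \varphi_1(\cdot;\alpha, h_\epsilon)$ (up to the normalization constant), and likewise $\sigma^*(\alpha) = \lambda_1(\alpha, \theta_{\underline\alpha}-m)$, $\psi^*(\cdot,\alpha) = \varphi_1(\cdot;\alpha,\theta_{\underline\alpha}-m)$.

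First I would record that, since $h_\epsilon \to h_0 := \theta_{\underline\alpha} - m$ in $C(\overline D)$ (hence in $L^p(D)$ for every $p$, and in particular the convergence is now along the full family $\epsilon\to 0$, not merely along subsequences), Proposition \ref{prop:E30} gives that the map $(\alpha, h)\mapsto (\lambda_1(\alpha,h), \varphi_1(\cdot;\alpha,h))$ is (real-analytically, or at least $C^\infty$) continuous as a map into $\mathbb{R}\times W^{2,p}(D)$, jointly in $\alpha$ and in $h$ in the appropriate topology. Feeding in $h = h_\epsilon$ and letting $\epsilon\to 0$ yields $\sigma_\epsilon(\alpha)\to\sigma^*(\alpha)$ and $\psi_\epsilon(\cdot,\alpha)\to\psi^*(\cdot,\alpha)$ in $W^{2,p}(D)$, for each fixed $\alpha$. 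To upgrade this to convergence in $C^k([\underline\alpha,\overline\alpha])$ and $C^k([\underline\alpha,\overline\alpha];W^{2,p}(D))$ respectively, I would differentiate the eigenvalue relation in $\alpha$: the derivatives $\partial_\alpha^k\sigma_\epsilon$ and $\partial_\alpha^k\psi_\epsilon$ satisfy linear elliptic systems whose coefficients are built from $\sigma_\epsilon, \psi_\epsilon$ and lower-order $\alpha$-derivatives, all of which are uniformly bounded in $\epsilon$ (Lemma \ref{lem:4.1}(i)--(ii)); combining these uniform bounds with the pointwise-in-$\alpha$ convergence just established, an Arzel\`a--Ascoli / interpolation argument gives convergence in $C^k$ in $\alpha$. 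Alternatively, and more cleanly, one invokes the stated $C^\infty$-dependence in Proposition \ref{prop:E30} directly to get that $\alpha\mapsto(\lambda_1(\alpha,h_\epsilon),\varphi_1(\cdot;\alpha,h_\epsilon))$ converges in $C^k$ in $\alpha$ as $h_\epsilon\to h_0$.

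The displayed consequence \eqref{eq:sigmastar} is then immediate: evaluating the $C^k$ convergence at $\alpha = \underline\alpha$ gives $\sigma_{0,\epsilon}\to\sigma^*_0$ and $\sigma_{1,\epsilon}\to\sigma^*_1$. The strict positivity $\sigma^*_1 = \partial_\alpha\sigma^*(\underline\alpha) > 0$ follows from two facts already in hand: $h_0 = \theta_{\underline\alpha}-m$ is non-constant (this is where Lemma \ref{lem:2} is essential, since if $h_0$ were constant the derivative would vanish), so Proposition \ref{prop:E20} applies and gives $\partial_\alpha\lambda_1(\alpha, h_0) > 0$ for all $\alpha$; in fact this is also consistent with the lower bound $\liminf_{\epsilon\to 0}\sigma_{1,\epsilon} > 0$ from Lemma \ref{lem:4.1}(iii). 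I expect the only genuine subtlety to be the bookkeeping needed to pass from continuity in the coefficient $h$ (in the weak $L^p$ or uniform topology) to $C^k$-in-$\alpha$ convergence of the eigenpair uniformly over $\alpha\in[\underline\alpha,\overline\alpha]$; this is handled by the uniform $\epsilon$-estimates of Lemma \ref{lem:4.1} plus compactness, and is precisely the kind of statement Proposition \ref{prop:E30} is designed to supply, so there is no real obstacle beyond invoking it correctly.
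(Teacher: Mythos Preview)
Your proposal is correct and follows essentially the same route as the paper. The paper's proof is simply: since $h_\epsilon \to h_0 = \theta_{\underline\alpha} - m$ in $L^\infty(D)$ (Proposition \ref{prop:vu}), the $C^k$ convergence of $\sigma_\epsilon$ and $\psi_\epsilon$ follows directly from the smooth dependence of $(\lambda_1,\varphi_1)$ on $(\alpha,h) \in \mathbb{R}_+ \times L^\infty(D)$ stated in Proposition \ref{prop:E20}(ii); the positivity $\sigma^*_1>0$ then follows from $h_0$ being non-constant (Lemma \ref{lem:2}) and Proposition \ref{prop:E20}(iii). Your invocation of Proposition \ref{prop:E30} (the weak-$L^p$ version) and the Arzel\`a--Ascoli/uniform-bound argument via Lemma \ref{lem:4.1} is a valid alternative route to the same conclusion, but since strong $L^\infty$ convergence of $h_\epsilon$ is already in hand, the paper short-circuits that bookkeeping by appealing straight to the smoothness in Proposition \ref{prop:E20}(ii).
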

\begin{proof}
Since now $h_\epsilon  \to h_0  = \theta_{\underline\al} - m$ in $L^\infty(D)$,
the corollary follows from Proposition \ref{prop:E20}(ii). Since $h_0 = \theta_{\underline\alpha} - m$ is non-constant  (Lemma \ref{lem:2}), Proposition \ref{prop:E20}(iv) asserts that  $\sigma^*_{1}>0$.
\end{proof}

\section{Convergence of $w_\epsilon$}\label{sec:6}
Let $w_\epsilon$ be given by \eqref{eq:wwwww},
define the normalized version $\tilde w_\epsilon = \tilde w_\epsilon(x,s)$ on $D\times[0,s_\epsilon]$ by
$$
\tilde{w}_\epsilon(x,s):= \left( \frac{\int_D \theta_{\underline\alpha}^2\,dx}{\int_\Omega \psi_\epsilon^2 w^2_\epsilon\,ds dx}\right)^{1/2}w_\epsilon(x,s )
%
$$
so that 
\begin{equation}\label{eq:norm}
\int_D \int_0^{s_\epsilon} \psi_\epsilon^2(x,\underline\alpha + \epsilon^{2/3}s) \tilde w_\epsilon^2(x,s)\,dsdx = \int_D \theta_{\underline\alpha}^2\,dx>0.
\end{equation}

\begin{proposition}\label{lem:w}
\begin{itemize}
\item[{\rm (i)}] $\|\tilde{w}_\ep\|_{H^1(D \times (0,s_\ep))}$ is bounded uniformly in $\epsilon$.


\item[{\rm (ii)}] For any $\beta>0$,  there exists $C_1>0$ such that for all $\epsilon$ sufficiently small,
$$
\tilde w_\epsilon(x,s ) \leq C_1 e^{-\beta s}
$$
for all $0 \leq s \leq s_\epsilon$.

\item[{\rm (iii)}] As $\epsilon \to 0$, $\tilde w_\epsilon(x, s )$ converges
locally uniformly in $\bar D \times [0,+\infty)$ to the unique positive solution of the problem
\begin{equation}\label{eq:eta}
\left\{
\begin{array}{ll}
\tilde\eta_{ss} + (\tilde a_0 - \sigma^*_{1} s) \tilde\eta=0 \quad \text{ for }s \geq 0,\\
\tilde\eta_s(0)= 0 = \tilde\eta(+\infty),\quad \int_0^\infty \tilde\eta^2\,ds = 1,
\end{array}\right.
\end{equation}
where
$\tilde a_0 = (\sigma^*_{1})^{2/3}A_0$,
with $\sigma^*_{1}$  given by \eqref{eq:sigmastar} and $A_0$ being the absolute value of the first negative root of the derivative of the Airy function.

\end{itemize}
\end{proposition}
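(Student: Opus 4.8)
I would prove the three parts in sequence, writing $\lambda_\epsilon := \big(\int_D\theta_{\underline\alpha}^2\,dx \big/ \int_\Omega\psi_\epsilon^2 w_\epsilon^2\,ds\,dx\big)^{1/2}$, so that $\tilde w_\epsilon = \lambda_\epsilon w_\epsilon$ again solves \eqref{eq:w}. Testing \eqref{eq:w} (for $\tilde w_\epsilon$) against $\tilde w_\epsilon$, integrating by parts and using \eqref{eq:norm} gives the exact energy identity
$$
\tilde J_\epsilon[\tilde w_\epsilon] = -\sigma_{0,\epsilon}\int_D\theta_{\underline\alpha}^2\,dx ,
$$
whose right-hand side is nonnegative and $O(\epsilon^{2/3})$ by Lemma \ref{claim:var}. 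Two preliminary facts are used repeatedly: from Lemma \ref{lem:1} and the Cauchy--Schwarz inequality $\|u_\epsilon\|_{L^2(\Omega)}\ge c>0$, so since $\int_\Omega\psi_\epsilon^2 w_\epsilon^2 = \epsilon^{-2/3}\|u_\epsilon\|_{L^2(\Omega)}^2$ we get $\lambda_\epsilon\le C\epsilon^{1/3}$; and together with Lemma \ref{lem::4.3} this makes $\tilde w_\epsilon(\,\cdot\,,s_\epsilon)$, hence the boundary contribution of $\tilde J_\epsilon$ on $D\times\{s_\epsilon\}$, smaller than any power of $\epsilon$. \emph{For (i):} the bound $\|\tilde w_\epsilon\|_{L^2(D\times(0,s_\epsilon))}\le C$ is immediate from \eqref{eq:norm} and $\psi_\epsilon\ge c_1$ (Lemma \ref{lem:4.1}(iv)). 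In $\tilde J_\epsilon[\tilde w_\epsilon]$ the terms $\int_D\int_0^{s_\epsilon}\psi_\epsilon^2\big[\alpha|\nabla_x\tilde w_\epsilon|^2 + \epsilon^{2/3}|\tilde w_{\epsilon,s}|^2 + (\sigma_\epsilon(\alpha)-\sigma_{0,\epsilon})\tilde w_\epsilon^2\big]$ are nonnegative, the term $-\epsilon^2\int_D\int_0^{s_\epsilon}\psi_\epsilon\psi_{\epsilon,\alpha\alpha}\tilde w_\epsilon^2$ is $O(\epsilon^2)$ by Corollary \ref{rmk4.1b}, and the boundary term on $D\times\{0\}$ is at most $C\epsilon^{4/3}\int_D\tilde w_\epsilon^2(x,0)\,dx$; a one-dimensional trace inequality in $s$ bounds the latter by $C\epsilon^{4/3}\big(1+\|\tilde w_{\epsilon,s}\|_{L^2(D\times(0,1))}^2\big)$, which is absorbed into $\epsilon^{2/3}\int_D\int_0^{s_\epsilon}\psi_\epsilon^2|\tilde w_{\epsilon,s}|^2$ for $\epsilon$ small. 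Since $\tilde J_\epsilon[\tilde w_\epsilon]=O(\epsilon^{2/3})$, this forces $\|\tilde w_{\epsilon,s}\|_{L^2}^2=O(1)$, $\|\nabla_x\tilde w_\epsilon\|_{L^2}^2=O(\epsilon^{2/3})$ and (using Lemma \ref{lem:4.1}(iii)) $\int_D\int_0^{s_\epsilon}s\,\tilde w_\epsilon^2=O(1)$, proving (i).

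For (ii), set $F(s):=\int_D\tilde w_\epsilon^2(x,s)\,dx$ and $g(s):=\int_D\tilde w_{\epsilon,s}^2(x,s)\,dx$; by (i), $\int_0^{s_\epsilon}F$ and $\int_0^{s_\epsilon}g$ are uniformly bounded. As $|(F^{1/2})'|\le g^{1/2}$, the function $s\mapsto F(s)^{1/2}$ is uniformly $\tfrac12$-H\"older, and since $F$ attains a value $\le C$ somewhere in $[0,1]$ it follows that $\sup_{[0,s_0]}F\le C(s_0)$ for any fixed $s_0$. Now rescale $t=s\epsilon^{-1/3}$: in the variables $(x,t)$ equation \eqref{eq:w} for $\tilde w_\epsilon$ becomes uniformly elliptic, with coefficient matrix $\mathrm{diag}(\alpha\psi_\epsilon^2 I_N,\psi_\epsilon^2)$ and zeroth-order coefficient $\psi_\epsilon^2\big(\sigma_\epsilon(\underline\alpha+\epsilon t)-\epsilon^2\psi_{\epsilon,\alpha\alpha}/\psi_\epsilon\big)$, both bounded uniformly in $\epsilon$ by Lemma \ref{lem:4.1} and Corollary \ref{rmk4.1b}; the condition on $D\times\{t=0\}$ is Robin with coefficient $O(\epsilon)$, i.e. essentially Neumann. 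The local De Giorgi--Nash--Moser $L^\infty$--$L^2$ estimate on $t$-windows of unit length -- which are $s$-windows of length $\epsilon^{1/3}$, so the Jacobian factor $\epsilon^{-1/3}$ is exactly compensated by the pointwise-in-$s$ bound $F\le C(s_0)$ -- then yields $\sup_{D\times[0,s_0]}\tilde w_\epsilon\le C(s_0)$, uniformly in $\epsilon$. With this uniform amplitude the supersolution argument of Lemma \ref{lem::4.3} (Claims \ref{claim:797}--\ref{claim:supersol}) applies verbatim to $\tilde w_\epsilon$, now with amplitude $C(s_0)$ instead of $O(\epsilon^{-1})$, and delivers $\tilde w_\epsilon(x,s)\le C_1 e^{-\beta s}$. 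This step is the main obstacle: because \eqref{eq:w} degenerates in the $s$-direction as $\epsilon\to0$, the uniform local $L^\infty$ bound near $s=0$ cannot be read off from the $H^1$ estimate of (i), and must be obtained through the H\"older control of $s\mapsto F(s)^{1/2}$ combined with the rescaling $t=s\epsilon^{-1/3}$.

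For (iii), by (i)--(ii) the family $\{\tilde w_\epsilon\}$ is bounded in $H^1\cap L^\infty$ with the uniform exponential bound of (ii) and $\|\nabla_x\tilde w_\epsilon\|_{L^2}^2\to0$. Integrating \eqref{eq:w} over $x\in D$ annihilates the singular term $\nabla_x\cdot(\psi_\epsilon^2\nabla_x\tilde w_\epsilon)$ (Neumann condition) and leaves an ODE-type relation for $P_\epsilon(s):=\int_D\psi_\epsilon^2(x,\underline\alpha+\epsilon^{2/3}s)\tilde w_\epsilon(x,s)\,dx$; the $L^\infty$ and $H^1$ bounds make $\{P_\epsilon\}$ precompact in $C^1_{\mathrm{loc}}([0,\infty))$, while the Poincar\'e inequality in $x$ with $\|\nabla_x\tilde w_\epsilon\|_{L^2}^2\to0$ forces $\tilde w_\epsilon-P_\epsilon/\!\int_D\theta_{\underline\alpha}^2\to0$ in $L^2$, and hence (after interpolation with interior H\"older estimates in $(x,t)$) $\tilde w_\epsilon\to\tilde w_0$ locally uniformly, for some $x$-independent $\tilde w_0$, along a subsequence. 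Passing to the limit in the weak form of the integrated equation -- using $\psi_\epsilon\to\theta_{\underline\alpha}$ and $\sigma_{1,\epsilon}\to\sigma^*_1$ (Corollary \ref{cor:4.9}), the expansion \eqref{eq:taylorepsilon}, the domination $\tilde w_\epsilon\le C_1 e^{-\beta s}$, and a further subsequence along which the bounded quantity $\sigma_{0,\epsilon}/\epsilon^{2/3}$ converges (to $-\mu$, say, with $\mu\ge0$) -- shows that $\tilde w_0$ solves $\tilde w_0''+(\mu-\sigma^*_1 s)\tilde w_0=0$ on $(0,\infty)$, with $\tilde w_0'(0)=0$ (limit of the Robin condition in \eqref{eq:w}), $\tilde w_0(+\infty)=0$, $\tilde w_0\ge0$ and $\int_0^\infty\tilde w_0^2\,ds=1$ (limit of \eqref{eq:norm}, by dominated convergence). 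The solutions of this ODE that decay at $+\infty$ are the multiples of $\mathrm{Ai}\big((\sigma^*_1)^{1/3}s-(\sigma^*_1)^{-2/3}\mu\big)$, and requiring $\tilde w_0>0$ on all of $[0,\infty)$ forces the value of the argument at $s=0$ to be the first negative zero $-A_0$ of $\mathrm{Ai}'$ (every other zero of $\mathrm{Ai}'$ lies below the first zero of $\mathrm{Ai}$); this selects $\mu=(\sigma^*_1)^{2/3}A_0=\tilde a_0$, and the normalization then identifies $\tilde w_0$ with the unique positive normalized solution $\tilde\eta$ of \eqref{eq:eta}. Since the limit does not depend on the subsequence, $\tilde w_\epsilon\to\tilde\eta$ locally uniformly on $\overline D\times[0,\infty)$; this Airy selection of $\tilde a_0$ is the conceptual heart of the argument, although it reduces to a self-contained ODE computation.
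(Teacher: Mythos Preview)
Your argument is correct and complete, but it takes a genuinely different route from the paper, especially in parts (ii) and (iii).

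For the local $L^\infty$ bound that feeds into (ii), the paper does \emph{not} argue via $F(s)=\int_D\tilde w_\epsilon^2\,dx$ and De Giorgi--Nash--Moser on the rescaled $(x,t)$-equation. Instead the paper \emph{interleaves} (ii) and (iii): it first shows (Claim~\ref{lem:conv}) that $\sup_{x}\tilde w_\epsilon(x,s)/\inf_{x}\tilde w_\epsilon(x,s)\to 1$ locally in $s$, by a blow-up argument in the $\tau=(s-s_k)\epsilon^{-1/3}$ variable together with a Liouville theorem for positive solutions of $\psi^{-2}\nabla_x\!\cdot(\psi^2\nabla_x W)+W_{\tau\tau}=0$ on the full cylinder $D\times\mathbb{R}$. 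This ratio estimate, combined with $H^1$-compactness of the $x$-average, yields the local uniform convergence of $\tilde w_\epsilon$; the local $L^\infty$ bound is then a consequence, and only \emph{after} that does the paper invoke the supersolution from Lemma~\ref{lem::4.3} to obtain (ii), and finally pass to the limit in \eqref{eq:norm} to recover the normalization $\int_0^\infty\tilde\eta^2=1$.

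Your approach replaces the Liouville/blow-up step by two purely analytic ingredients: the H\"older control of $s\mapsto F(s)^{1/2}$ (giving $\sup_{[0,s_0]}F\le C(s_0)$ and hence, via DGNM on unit $t$-windows, the local $L^\infty$ bound), and the Poincar\'e-in-$x$ estimate $\|\tilde w_\epsilon-\bar{\tilde w_\epsilon}\|_{L^2}^2\le C\|\nabla_x\tilde w_\epsilon\|_{L^2}^2=O(\epsilon^{2/3})$ upgraded to $L^\infty$ by interpolation with the $C^\gamma_x$-bound coming from the rescaled equation. Note that your ``interpolation'' step needs one extra remark to be airtight: with $g_\epsilon(s):=\int_D|\tilde w_\epsilon-\bar{\tilde w_\epsilon}|^2\,dx$ one has $\int_0^M|g_\epsilon'|\le 2(\int_0^M g_\epsilon)^{1/2}\|\tilde w_{\epsilon,s}\|_{L^2}\to 0$, so $\sup_{[0,M]}g_\epsilon\to 0$, and then the uniform $C^\gamma_x$-bound turns this into $\sup_{D\times[0,M]}|\tilde w_\epsilon-\bar{\tilde w_\epsilon}|\to 0$. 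With that in place your proof goes through.

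The trade-off is clear: the paper's Liouville argument is more geometric and yields the stronger oscillation statement $\sup_x/\inf_x\to 1$ directly, at the cost of an auxiliary Liouville result on cylinders (Appendix~\ref{sec:C}); your route stays entirely within energy methods and standard elliptic regularity, decouples (ii) from (iii), and avoids the Liouville theorem altogether.
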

Since $\tilde\eta(+\infty) = 0$ and $\tilde{w}_\ep(x,s) \to 0$ as $s \to +\infty$ uniformly in $x \in D$, we have in fact proved the following.
\begin{corollary}\label{cor:6.1b}
$\|\tilde{w}_\ep(x,s) - \tilde\eta(s)\|_{L^\infty(D \times (0,s_\ep))} \to 0$ as $\ep \to 0$.
\end{corollary}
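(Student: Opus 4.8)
The plan is to deduce the uniform convergence directly from two ingredients already in hand: the uniform exponential decay of Proposition \ref{lem:w}(ii) and the local uniform convergence of Proposition \ref{lem:w}(iii). These are combined by splitting the strip $D \times (0,s_\epsilon)$ into a compact ``core'' $\bar D \times [0,M]$ and a ``tail'' $D \times [M, s_\epsilon]$, controlling the first term by (iii) and the second term by (ii).

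First I would fix $\beta = 1$ (any fixed positive value serves) and let $C_1$ be the corresponding constant from Proposition \ref{lem:w}(ii), so that $0 < \tilde w_\epsilon(x,s) \le C_1 e^{-s}$ on $D \times (0,s_\epsilon)$ for all sufficiently small $\epsilon$; here $\tilde w_\epsilon > 0$ since $u_\epsilon,\psi_\epsilon > 0$. Since $\tilde\eta$ is a positive solution of \eqref{eq:eta}, it is continuous on $[0,\infty)$ with $\tilde\eta(+\infty) = 0$, hence $\sup_{s \ge M}\tilde\eta(s) \to 0$ as $M \to \infty$. Given $\delta > 0$, choose $M$ so large that $C_1 e^{-M} < \delta/2$ and $\tilde\eta(s) < \delta/2$ for all $s \ge M$. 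Then for every $\epsilon$ small enough that $s_\epsilon > M$, nonnegativity of $\tilde w_\epsilon$ and $\tilde\eta$ gives
\[
\sup_{x \in D,\; M \le s \le s_\epsilon} |\tilde w_\epsilon(x,s) - \tilde\eta(s)| \le \sup_{x \in D,\; s \ge M}\bigl(\tilde w_\epsilon(x,s) + \tilde\eta(s)\bigr) \le C_1 e^{-M} + \delta/2 < \delta .
\]
On the other side, $\bar D \times [0,M]$ is compact, so Proposition \ref{lem:w}(iii) yields $\sup_{\bar D \times [0,M]}|\tilde w_\epsilon - \tilde\eta| < \delta$ once $\epsilon < \epsilon_0(\delta,M)$. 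Adding the two estimates gives $\|\tilde w_\epsilon - \tilde\eta\|_{L^\infty(D\times(0,s_\epsilon))} < \delta$ for all sufficiently small $\epsilon$; since $\delta>0$ was arbitrary, this proves the corollary.

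There is no genuine obstacle here. The only point that must be invoked carefully is that the exponential bound in Proposition \ref{lem:w}(ii) holds with $C_1$ and the smallness threshold on $\epsilon$ independent of $x$ and $s$ — precisely what makes the tail uniformly controllable in $\epsilon$ — together with the fact that the decay $\tilde\eta(+\infty) = 0$ is built into the defining problem \eqref{eq:eta}. (A quantitative rate is available from the Airy-type asymptotics of $\tilde\eta$, but is not needed for the qualitative statement.)
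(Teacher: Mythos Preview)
Your argument is correct and is exactly the approach the paper takes: the paper justifies the corollary in a single sentence (``Since $\tilde\eta(+\infty)=0$ and $\tilde w_\epsilon(x,s)\to 0$ as $s\to+\infty$ uniformly in $x\in D$, we have in fact proved the following''), and your core-plus-tail splitting via Proposition~\ref{lem:w}(ii) and (iii) is precisely the content of that sentence, spelled out in detail.
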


\begin{proof}
By Lemma \ref{claim:var} and the fact that $\tilde w_\epsilon$ is a minimizer of \eqref{eq:var2}, we obtain
\begin{align*}
&\int_D \int_0^{s_\epsilon} \psi^2_\epsilon \left[ \alpha \epsilon^{-2/3} |\nabla_x \tilde w_\epsilon|^2 +  |(\tilde{w}_{\epsilon})_s|^2 + \left( \frac{\sigma_\epsilon(\alpha) - \sigma_\epsilon(\underline\alpha)}{\epsilon^{2/3}} + O(\epsilon^{4/3}) \right)\tilde w^2_\epsilon \right]dsdx\\
& + \epsilon^{2/3}\int_D \left[ \psi_{\epsilon,\alpha}\psi_\epsilon \tilde{w}^2_\epsilon\right]_{s=0}^{s_\epsilon} \,dx \leq C\left( \int_D \int_0^{s_\epsilon} \psi^2_\epsilon \tilde w^2_\epsilon \,dsdx\right) = C.
\end{align*}
%

\begin{claim}\label{claim:6.3}
$\displaystyle \left| \int_D \left[ \psi_{\epsilon,\alpha}\psi_\epsilon \tilde{w}_\epsilon^2\right]_{s=0}^{s_\epsilon} \,dx  \right| \leq C\int_D \int_0^{s_\ep} \psi^2_\epsilon \left(  |\nabla_x \tilde{w}_\epsilon|^2 +  |\tilde{w}_{\epsilon,\alpha}|^2 +  \tilde{w}_\epsilon^2\right)\,dsdx.$
\end{claim}
To prove Claim \ref{claim:6.3}, we apply
 the Trace Theorem, so that  there is $C>0$ independent of $\epsilon$ such that
\begin{align*}
\left| \int_D \left[ \psi_{\epsilon,\alpha}\psi_\epsilon \tilde{w}^2_\epsilon\right]_{s=0}^{s_\epsilon} \,dx  \right| &\leq C \int_D \int_0^{s_\ep}\left(  |\nabla_x \tilde{w}_\epsilon|^2 +  |\tilde{w}_{\epsilon,\alpha}|^2 +  \tilde{w}_\epsilon^2\right)\,dsdx\\
&\leq  C\int_D \int_0^{s_\ep} \psi^2_\epsilon \left(  |\nabla_x \tilde{w}_\epsilon|^2 +  |\tilde{w}_{\epsilon,\alpha}|^2 +  \tilde{w}_\epsilon^2\right)\,dsdx,
\end{align*}
where we have used $\|\psi_{\ep,\al}\psi_\ep\|_{L^\infty(D\times(\underline\al,\overline\al))} \leq C$ (Corollary \ref{cor:4.9}) for the first inequality, and Lemma \ref{lem:4.1}(iv) for the second inequality.

From Claim \ref{claim:6.3}, the normalization \eqref{eq:norm}, the  estimate in the beginning of the proof, and the monotonicity of $\sigma_\epsilon(\alpha)$ in $\alpha$ (Proposition \ref{prop:E20}(iv)), we have
$$
(1-C \ep^{2/3})\int_D \int_0^{s_\epsilon} \psi^2_\epsilon \left[\underline\alpha\epsilon^{-2/3} |\nabla_x \tilde w_\epsilon|^2 + |(\tilde{w}_{\epsilon})_s|^2 + \tilde{w}^2_\epsilon \right] \leq C \int_D \int_0^{s_\ep} \psi^2_\ep \tilde{w}^2_\ep\,dsdx = C.
$$
By Lemma \ref{lem:4.1}(iv), we deduce
\begin{equation}
\int_D \int_0^{s_\epsilon}\left[\epsilon^{-2/3} |\nabla_x \tilde w_\epsilon|^2 + |(\tilde{w}_{\epsilon})_s|^2 + \tilde{w}^2_\epsilon \right] \leq C,
\end{equation}
which implies our assertion (i).  Passing
to a subsequence, $\tilde w_\epsilon(x,s)$ converges weakly in
$H^1_{loc}(D \times[0,\infty))$ to some
function $\tilde\eta$. 
of $x$. 
Moreover, as $\int_D \int_0^{s_\ep} |\nabla_x \tilde{w}_\ep|^2\,dsdx \leq C \epsilon^{2/3}$, it follows that $\nabla_x \tilde \eta = 0$ a.e..

We outline the rest of the proof of Proposition \ref{lem:w}. First we will show (iii) except the normalization condition
\begin{equation}\label{eq:normal}
\int_0^\infty \tilde\eta^2\,ds=1.
\end{equation} Second, we will show the estimate (ii). Finally we will use (ii) to derive \eqref{eq:normal} from   \eqref{eq:norm}, which completes the proof of (iii).
%
%

We claim that $\tilde\eta$ must satisfy
the equation in \eqref{eq:eta}. To see this claim, note that the equation for $\tilde{w}_\epsilon$ is
\begin{equation}\label{eq:ww}
\begin{array}{ll}
0=&-\frac{\alpha}{\epsilon^{2/3}} \nabla_x \cdot (\psi^2_\epsilon \nabla_x \tilde w_\epsilon) -  [\psi^2_\epsilon (\tilde w_\epsilon)_s]_s \\
&\quad + \left( \frac{\sigma_\epsilon(\alpha) - \sigma_\epsilon(\underline\alpha)}{\epsilon^{2/3}} + \frac{\sigma_\epsilon(\underline\alpha)}{\epsilon^{2/3}} - \epsilon^{4/3}\frac{\psi_{\ep,\alpha\alpha}}{\psi_\epsilon}\right)\psi^2_\epsilon\tilde w_\epsilon
\end{array}
\end{equation}
We argue via the weak formulation.
\begin{claim}\label{claim:100}
There exists a constant $\bar a_0$ such that  for each test function $z(s)$ that is compactly supported in $[0,\infty)$,
$$
\int_0^{\infty} \left[-z_s \tilde\eta_s + (\bar a_0 - \sigma^*_{1} s) z\tilde\eta\right]\,ds = 0.
$$
In particular, $\tilde\eta$ satisfies the equation $\tilde\eta_{ss} + (\bar a_0 - \sigma^*_{1} s) \tilde\eta =0$ on $(0,\infty)$ and $\tilde\eta_s(0) = 0$.
\end{claim}

Multiplying  \eqref{eq:ww}  by a test function $z = z(s)$, and integrating
 over $x\in D$, we see that the term involving derivatives in $x$ vanishes (by the Neumann boundary condition $\frac{\partial \tilde w_\epsilon}{\partial n}=0$), and obtain
\begin{equation}\label{eq:weak}
0=-z \int_D   [\psi^2_\epsilon (\tilde w_\epsilon)_s]_s\,dx
 + z\int_D\left[ \frac{\sigma_\epsilon(\alpha) - \sigma_\epsilon(\underline\alpha)}{\epsilon^{2/3}} + \frac{\sigma_\epsilon(\underline\alpha)}{\epsilon^{2/3}} - \epsilon^{4/3}\frac{\psi_{\ep,\alpha\alpha}}{\psi_\epsilon}\right] \psi^2_\epsilon \tilde{w}_\epsilon
\,dx.
\end{equation}

Next, integrate  the first term of \eqref{eq:weak} over $s \in [0,s_\epsilon]$, we see that
\begin{align*}
&\quad - \int_0^{s_\epsilon}z \int_D   [\psi^2_\epsilon (\tilde w_\epsilon)_s]_s\,dxds \\ &= \int_0^{s_\epsilon}\int_D z_s \psi^2_\epsilon (\tilde w_\epsilon)_s\,dxds - \left[ z\int_D \psi^2_\epsilon (\tilde{w}_\epsilon)_s\,dx \right]_{s=0}^{s_\epsilon}\\
&= \int_0^{s_\epsilon}\int_D z_s \psi^2_\epsilon (\tilde w_\epsilon)_s\,dxds + \epsilon^{2/3} \left[ z\int_D \psi_\epsilon \psi_{\ep,\al} \tilde{w}_\epsilon\,dx \right]_{s=0}^{s_\epsilon},
\end{align*}
where we have used the boundary condition $(\tilde w_\epsilon)_s = -\epsilon^{2/3} \psi_{\ep,\al} \tilde{w}_\epsilon/\psi_\epsilon$ on $D \times \{0,s_\epsilon\}$. Since $z(s)$ has compact support in $[0,\infty)$,  the boundary term evaluated at $s=s_\epsilon$ vanishes, and the remaining boundary term is of order $O(\epsilon^{2/3})$ (since $\tilde{w}_\epsilon$ is bounded in $H^1(D \times (0,s_\epsilon))$ by assertion (i), and hence bounded in $L^2(D\times \{0\})$ by the Trace Theorem). Hence, we have
\begin{equation}\label{eq:bd}
- \int_0^{s_\epsilon}z \int_D   [\psi^2_\epsilon (\tilde w_\epsilon)_s]_s\,dxds =  \int_0^{s_\epsilon}\int_D z_s \psi^2_\epsilon (\tilde w_\epsilon)_s\,dxds + o(1).
\end{equation}
Also,
in the support of $z(s)$, $(\psi_\epsilon)^2(x,\underline\alpha + \epsilon^{2/3}s) \to (\psi^*)^2(x,\underline\alpha)$ uniformly, so we may use \eqref{eq:bd} to integrate \eqref{eq:weak} over $s \in [0,s_\epsilon]$ and pass to the limit to get
\begin{equation}\label{eq:part1}
0=\left(\int_D (\psi^*)^2(x,\underline\alpha)\,dx\right) \left[  \int_0^\infty z_s \tilde\eta_s\,ds + \int_0^\infty (\sigma^*_{1}s -\bar a_0) z \tilde\eta\,ds\right]
\end{equation}
where we have used
Corollary \ref{cor:4.9}
and that  $\bar a_0$ is a subsequential limit of $-\sigma_\epsilon(\underline\alpha)/\epsilon^{2/3}$ (see also Lemma \ref{claim:var}). This proves Claim \ref{claim:100}.
Next, we claim that
\begin{equation}\label{eq:E31}
\int_0^\infty \tilde\eta^2\,ds < +\infty.
\end{equation}
Notice that by normalization of $\tilde w_\epsilon$
(see \eqref{eq:norm}), and the uniform (in $\ep$) positive upper/lower bound of $\psi_\ep$ (Lemma \ref{lem:4.1}(iv)),
there exists a fixed constant $C_0$ such that for each $M>0$, and for all $\epsilon>0$ sufficiently small,  $\int_0^M \int_D \tilde w_\epsilon^2\,dxds \leq \int_0^{s_\epsilon}\int_D \tilde w_\epsilon^2\,dxds \leq C_0$. Letting $\epsilon \to 0$, $\int_0^M \tilde\eta^2\,ds \leq C_0$
for all $M>0$. i.e. \eqref{eq:E31} holds.

\begin{claim}\label{claim:E32}
$\tilde\eta$ is a positive solution that satisfies \eqref{eq:eta} with condition \eqref{eq:normal} being replaced by \eqref{eq:E31}.
\end{claim}
By Claim \ref{claim:100}, $\tilde\eta$ satisfies
\begin{equation}\label{eq:E33}
\tilde\eta_{ss} + (\bar{a}_0 - \sigma^*_1 s) \tilde\eta = 0,\quad  \tilde\eta\geq 0 \quad \text{ on }[0,+\infty),\quad \text{and }\quad \tilde\eta_s(0) = 0.
\end{equation}
It remains to show that $\tilde\eta(+\infty) = 0$,
and that the subsequential limit $\bar{a}_0$ must be determined by $\tilde{a}_0$ of the proposition.
By \eqref{eq:E33} and $\tilde\eta >0$, $\tilde\eta_{ss} \geq 0$ for all $s$ sufficiently large. Hence $\tilde\eta(+\infty)$ exists in $[0,+\infty]$. By \eqref{eq:E31}, $\tilde\eta(+\infty) = 0$.

Hence, $\tilde\eta$ is a constant multiple of $\text{Airy}((\sigma^*_{1})^{1/3}s - A_0)$,
where $A_0$ is the absolute value of the first
negative root of the derivative of the Airy function
$\text{Airy}(x)$. In particular
 the subsequential limit $\bar{a}_0$ given in \eqref{eq:part1} is uniquely determined by $\tilde{a}_0 = (\sigma_1^*)^{2/3}A_0$ (i.e. the full limit $\lim_{\epsilon \to 0} \sigma_\ep(\underline\al)/\ep^{2/3}$ exists). This shows  Claim \ref{claim:E32}.
 To finish the proof of (iii) except for \eqref{eq:normal}, it remains to establish the following.
\begin{claim}\label{lem:conv}
$\tilde w_\epsilon(x,s) \to \tilde\eta(s)$ locally uniformly in $\bar D \times [0,\infty)$. In particular, for each $M>0$, $\|\tilde w_\epsilon\|_{L^\infty(D \times[0,M])}$ is bounded uniformly in $\epsilon$.
\end{claim}
It is  enough to show that for each $M>0$,
\begin{equation}\label{eq:lion}
\sup_{s \in [0,M]} \frac{\sup_{x \in D} \tilde{w}_\epsilon(x,s)}{\inf_{x\in D} \tilde{w}_\epsilon(x,s)} \to 1 \quad \text{ as }\epsilon \searrow 0.
\end{equation}
For, assuming \eqref{eq:lion}, one can write
\begin{equation}\label{eq:harnackx}
\tilde{w}_\epsilon(x,s) = \tilde{w}_\epsilon(x_0,s)(1 + \delta_\epsilon(x,s)) \quad \text{ for some }x_0 \in D,
\end{equation}
 where $\delta_\ep(x,s) \to 0$ in  $L^\infty_{loc}(\bar D \times[0,+\infty))$. Now, if we integrate \eqref{eq:harnackx} over $x \in D$,
 then
$$
\tilde{W}_\ep(s) := \frac{1}{|D|} \int_D \tilde{w}_\epsilon(x,s)\,dx = \tilde{w}_\epsilon(x_0,s)(1 + \hat\delta_\ep(s)),
$$
where $\hat\delta_\ep(s) \to 0$ in $L^\infty_{loc}([0,\infty))$.
Since $\tilde{w}_\epsilon$ is bounded in $H^1(D \times (0,s_\epsilon))$, one can easily deduce that $\tilde{W}_\ep(s) \in H^1_{loc}((0,+\infty)) \subset C^{1/2}_{loc}([0,+\infty))$. Therefore, by Arzel\'{a}-Ascoli Theorem,  $\tilde{W}_\ep(s)$ and hence $\tilde{w}_\epsilon(x_0,s)$ converges to $\tilde\eta(s)$ in $C_{loc}([0,\infty))$.
 Finally, \eqref{eq:harnackx} implies that $\tilde{w}_\epsilon(x,s) \to \tilde\eta(s)$ locally uniformly in $D \times [0,+\infty)$.

It remains to show \eqref{eq:lion}
in a similar fashion as in  Claim \ref{claim:1}.
Assume to the contrary that there exists some constant $c_0>1$, $\epsilon = \epsilon_k \to 0$, $s_k \to s_0 <+\infty$, such that
\begin{equation}\label{eq:lions}
\sup_{x \in D} \tilde{w}_\epsilon(x,s_k) \geq c_0\inf_{x \in D} \tilde{w}_\epsilon(x,s_k).
\end{equation}
Similarly as before, we extend $\tilde{w}_\ep$ by reflection on $D \times \{0\}$ so that $\tilde{w}_\ep$ is defined on $D \times(-s_\ep, s_\ep)$, and define
$$
W_k(x,\tau):= \frac{\tilde{w}_\epsilon(x,s_k + \epsilon^{1/3}\tau/\sqrt{\underline\alpha})}{\sup_{x' \in D} \tilde{w}_\epsilon(x',s_k)} \quad \text{ in }D \times \left(
-(s_\epsilon+s_k)\frac{\sqrt{\underline\al}}{\ep^{1/3}}, (s_\ep - s_k) \frac{\sqrt{\underline\al}}{\ep^{1/3}}\right).
$$
Recall that $s_\epsilon$ is defined in \eqref{eq:s}. 
By the equation \eqref{eq:ww} satisfied by $\tilde{w}_\ep$, $W_k$ satisfies
\begin{equation}\label{eq:WWWW}
\left\{
\begin{array}{ll}
-\frac{\alpha }{\underline\al} \nabla_x \cdot (\psi^2_\ep \nabla_x W_k) - (\psi^2_\ep W_{k,\tau})_\tau \\
\qquad \qquad +  \frac{1}{\underline\alpha}\left( \sigma_\ep\left(\underline\al + \ep^{\frac{2}{3}}s_k + \ep \frac{\tau}{\sqrt{\underline\al}}\right) - \ep^2 \frac{\psi_{\ep,\al\al}}{\psi_\ep}\right)\psi^2_\ep W_k=0,
\end{array}
\right.
\end{equation}
in $D \times \left(
-(s_\ep + s_k)\frac{\sqrt{\underline\al}}{\ep^{1/3}}, (s_\ep - s_k) \frac{\sqrt{\underline\al}}{\ep^{1/3}}\right)$, where
$$
\alpha = \alpha(\tau) =
\underline\al + \left|\ep^{\frac{2}{3}}s_k + \ep \frac{\tau}{\sqrt{\underline\al}}\right| 
$$
 and the boundary conditions
$$\left\{
\begin{array}{ll}
\frac{\partial}{\partial n} W_k = 0&  \text{ on } \quad \partial D \times \left(
-(s_\ep + s_k)\frac{\sqrt{\underline\al}}{\ep^{1/3}}, (s_\ep - s_k) \frac{\sqrt{\underline\al}}{\ep^{1/3}}\right)\\
W_{k,\tau} = - \frac{\ep}{\sqrt{\underline\al}} \frac{\psi_{\ep,\al}}{\psi_\ep} W_k &\text{ on } D \times \left\{-(s_\ep + s_k)\frac{\sqrt{\underline\al}}{\ep^{1/3}}, (s_\ep - s_k) \frac{\sqrt{\underline\al}}{\ep^{1/3}}\right\}.
\end{array}\right.
$$
Since $s_\ep \to +\infty$ as $\ep \to 0$ and that $s_k$ remains bounded, we see in particular that the domain of $W_k$ tends to $D \times \mathbb{R}$ as $k \to \infty$. 
\begin{claim}\label{claim:FFF}
For each $M>0$, $\displaystyle \sigma_\ep\left(\underline\al + s_k\ep^{\frac{2}{3}} + \ep \frac{\tau}{\sqrt{\underline\al}}\right) \to 0$ as $\ep \to 0$, uniformly for $\tau \in [-M,M]$.
\end{claim}
By Lemma \ref{lem:4.1}(i), $\sigma_\ep$ are bounded in $C^1([\underline\al, \overline\al])$ uniformly in $\ep$. Hence we may write
$$
\left|\sigma_\ep\left(\underline\al + s_k\ep^{\frac{2}{3}} + \ep \frac{\tau}{\sqrt{\underline\al}}\right)\right| \leq |\sigma_\ep(\underline\al)| + C \left|\ep^{2/3}s_k + \ep \frac{\tau}{\sqrt{\underline\al}}\right|
$$
and conclude that $\sigma_\ep\left(\underline\al + s_k\ep^{\frac{2}{3}} + \ep \frac{\tau}{\sqrt{\underline\al}}\right)$ goes to zero by  Lemma \ref{claim:var}, and boundedness of $s_k, \tau$. This proves Claim \ref{claim:FFF}.

Since the coefficients of \eqref{eq:WWWW} are bounded in $L^\infty_{loc}(\overline D \times \mathbb{R})$ uniformly in $k$, Harnack inequality, and  the normalization condition $\sup_{x \in D} W_k(x,0) = 1$ implies that $W_k$ are bounded in $L^\infty_{loc}(\overline D \times \mathbb{R})$ uniformly in $k$. Hence we may apply elliptic $L^p$ estimates similarly as in Claim \ref{claim:1} to conclude that a subsequence of $W_k$ converges in $L^\infty_{loc}(\overline{D} \times \mathbb{R})$ to a positive solution of
 $(\psi_0(x,\underline\alpha))^{-2} \nabla_x \cdot(\psi_0^2(x, \underline\alpha)  \nabla_x W) + W_{\tau\tau} =0$ in $D \times \mathbb{R}$. (Here we used Claim \ref{claim:FFF}.)
 Now, we apply the following Liouville Theorem, whose proof is exactly analogous to Proposition \ref{prop:A1} and is skipped.
\begin{proposition}
Let $\psi(x)$ be a smooth positive function defined in $\bar D$, then every positive solution $W$ to $\psi^{-2} \nabla_x \cdot(\psi^2 \nabla_x W) + W_{tt} =0$ in $D \times \mathbb{R}$, subject to Neumann boundary condition on $\partial D \times \mathbb{R}$, is necessarily a constant.
\end{proposition}
So that by normalization $\sup_{x \in D} W_k(x,0) = 1$,  $W_k(x,0) \to 1$ uniformly in $D$. This contradicts \eqref{eq:lions} and proves \eqref{eq:lion}. This establishes Claim \ref{lem:conv}.
Claims \ref{lem:conv} and \ref{claim:E32} establish assertion (iii) except for condition \eqref{eq:normal}.



Next,  we proceed to show the estimate in (ii). By the preceding argument in the proof of Lemma \ref{lem::4.3},
specifically the construction of supersolution $\overline{W}$ in Claim \ref{claim:supersol}, we can show that for all $\beta >0$, there exists $s_0 >0$
 such that
$$
\tilde{w}(x,s) \leq \left(\sup_{\tiny \begin{array}{cc} x \in D\\ 0 \leq s \leq s_0 \end{array} }  \tilde{w}_{\epsilon} \right) [\exp(-\beta(s-s_0)) + \exp(\beta(s - (3/2)s_\epsilon))]
$$
for $x \in D$ and $s \in [s_0,s_\epsilon)$. Then  (ii)  follows from
Claim \ref{lem:conv}, as the expression inside paranthesis is
bounded uniformly in $\epsilon$.
%
We do not repeat the details.

For (iii), it remains to show \eqref{eq:normal}. By assertion (ii), and that
\begin{equation}\label{eq:factt}
\psi_\ep(x,\underline\al + \ep^{2/3}s) \to \psi^*(x,\underline\al)
\quad \text{ and }\quad \tilde{w}_\ep(x,s) \to \tilde\eta(s)
\end{equation}
in $L^\infty_{loc}(\bar{D} \times [0,\infty))$ (by Lemma \ref{lem:4.1}(iv) and  Claim \ref{lem:conv} resp.),
we may pass to the limit in \eqref{eq:norm} to obtain
\begin{align*}
\int_D \theta_{\underline\alpha}^2\,dx = \int_D \int_0^{s_\epsilon} \psi^2_\epsilon(x,\underline\alpha + \epsilon^{2/3}s)\tilde w_\epsilon^2(x,s)\,dsdx \to \int_D (\psi^*)^2(x,\underline\alpha)\,dx \int_0^\infty \tilde\eta^2\,ds
\end{align*}
Upon noting that (see Definition \ref{def:2.2}(ii))
\begin{equation}\label{eq:psipsi}
\psi^*(x,\underline\al) = \theta_{\underline\al}(x)\quad \text{ in }D,
\end{equation}
the proof is completed.
%
\end{proof}

\section{Proof of Theorem \ref{thm}}\label{sec:7}


\begin{proof}[Proof of Theorem {\rm \ref{thm}}]
First, we note that by
 Proposition \ref{prop:vu},
\begin{equation}\label{eq:1aa}
\epsilon^{2/3} \int_D \int_0^{s_\ep} \psi_\ep w_\ep\,dsdx =  \int_D \int_{\underline\al}^{\overline\al} u_\ep(x, \al)\,d\al dx = \int_D \hat{u}_\ep \,dx \to \int_D \theta_{\underline\al}\,dx
\end{equation}
as $\epsilon \to 0$.
Furthermore, by \eqref{eq:factt}, \eqref{eq:psipsi}
and the estimate of Proposition \ref{lem:w}(ii),
\begin{equation}\label{eq:1bb}
\int_D \int_0^{s_\ep} \psi_\ep \tilde w_\ep\,dsdx \to \int_D \psi^*(x,\underline\al)\,dx \int_0^\infty \tilde\eta(s)\,ds= \int_D \theta_{\underline\al}(x)\,dx \int_0^\infty \tilde\eta(s)\,ds.
\end{equation}
By the definition of $w_\epsilon$ and $\tilde{w}_\ep$, there is a function $\Gamma(\epsilon)$ such that
\begin{equation}\label{eq:1cc}
w_\ep(x,s) = \Gamma(\ep) \tilde{w}_\ep(x,s).
\end{equation}
By \eqref{eq:1aa} and \eqref{eq:1bb}, we have
\begin{equation}\label{eq:1dd}
\lim_{\epsilon \to 0} \ep^{2/3} \Gamma(\ep) = \left( \int_0^\infty \tilde\eta\,ds\right)^{-1}.
\end{equation}

Hence, by \eqref{eq:1cc} and Corollary \ref{cor:6.1b},
$$
\left\| \epsilon^{2/3} w_\ep(x,(\al - \underline\al)/\ep^{2/3}) - \left( \int_0^\infty \tilde\eta\,ds\right)^{-1} \tilde\eta\left(\frac{\al - \underline\al}{\epsilon^{2/3}}\right)\right\|_{L^\infty(\Omega)} \to 0.
$$
By the fact that $\left( \int_0^\infty \tilde\eta\,ds\right)^{-1} \tilde\eta(s) = \eta^*(s)$ where $\eta^*$ is given in Definition \ref{def:2.2}(iii), we also have
$$
\left\| \epsilon^{2/3} w_\ep(x,(\al - \underline\al)/\ep^{2/3}) - \eta^*\left(\frac{\al - \underline\al}{\epsilon^{2/3}}\right)\right\|_{L^\infty(\Omega)} \to 0.
$$
Using Lemma \ref{lem:4.1}(iv), we have
\begin{equation}\label{eq:facttt}
\left\| \epsilon^{2/3} u_\ep(x,\al) - \psi_\ep(x,\al)\eta^*\left(\frac{\al - \underline\al}{\epsilon^{2/3}}\right)\right\|_{L^\infty(\Omega)} \to 0.
\end{equation}
By the fact that  $\eta^*(s) \leq Ce^{-\beta s}$ for some $C, \beta>0$, \eqref{eq:factt} and \eqref{eq:psipsi}, we have
\begin{equation}\label{claim:111}
\left\|(\psi_\epsilon(x,\alpha) - \theta_{\underline\alpha}(x)) \eta^*\left( \frac{\alpha - \underline\alpha}{\epsilon^{2/3}}\right)\right\|_{L^\infty(\Omega)} \to 0.
\end{equation}
And \eqref{eq:thm2} follows from \eqref{eq:facttt} and \eqref{claim:111}.
\end{proof}

\section{Acknowledgement}
 We are grateful to Professor Benoit Perthame for stimulating discussions
and for bring reference \cite{PS} to our attention, and also Professor Fan-Hua Lin for a comment that simplified the proof of Proposition \ref{prop:A1}. KYL thanks the hospitality of National Center for Theoretical Sciences, National Tsing-Hua University, Taiwan, and
the Institute for Mathematical Sciences, Renmin University of China, where parts of this work are completed.

\appendix

\section{Existence Results}\label{sec:A}

In this section we show the existence of positive solution to \eqref{eq:main}. For this purpose, we fix positive parameters $\ep$ and  $\overline \al > \underline\al$, and denote (in this section only) the principal eigenvalue and eigenfunction of the following problem by ${\mu_1} $ and $\phi_1$.
%
%
%
%
\begin{equation}\label{eq:pev2}
\left\{
\begin{array}{ll}
\alpha \Delta_x \phi +\epsilon^2 \phi_{\alpha\alpha} + m(x) \phi + \mu \phi =0 \quad \text{ in }{\Omega}:= D \times (\underline\al, \overline\al),\\
\frac{\partial \phi}{\partial n} =0 \quad \text{ on }\partial D \times (\underline\alpha, \overline\alpha), \\
\phi_\alpha =0 \quad \text{ on }D \times \{\underline\alpha, \overline\alpha\}.
\end{array}
\right.
\end{equation}
\begin{theorem}\label{thm:exist2}
If ${\mu_1} \geq 0$, then the equation \eqref{eq:main} has no positive steady-state. If ${\mu_1} <0$, then the equation \eqref{eq:main} has at least one positive steady-state.
\end{theorem}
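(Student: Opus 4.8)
The plan is to treat the two implications separately: the non-existence part by an integration-by-parts identity against the principal eigenfunction, and the existence part by a degree argument on the positive cone.

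\textbf{Non-existence when $\mu_1\ge 0$.} Write $L:=\alpha\Delta_x+\epsilon^2\partial_{\alpha\alpha}$, so that a positive steady state obeys $Lu+(m-\hat u)u=0$ while the principal eigenpair of \eqref{eq:pev2} obeys $L\phi_1+(m+\mu_1)\phi_1=0$, both under the mixed Neumann conditions. Since $L$ is formally self-adjoint and $u,\phi_1$ satisfy those homogeneous conditions, I would multiply the first equation by $\phi_1$, the second by $u$, subtract, and integrate over $\Omega$; Green's identity in $x$ (using $\partial_n u=\partial_n\phi_1=0$) and integration by parts in $\alpha$ (using $u_\alpha=(\phi_1)_\alpha=0$ at $\alpha\in\{\underline\alpha,\overline\alpha\}$) annihilate every boundary term, leaving
\[
\int_\Omega \bigl(\hat u(x)+\mu_1\bigr)\,u\,\phi_1\,dx\,d\alpha = 0 .
\]
This is contradictory, since $u,\phi_1>0$ in $\Omega$, $\hat u>0$ in $D$, and $\mu_1\ge 0$ make the integrand strictly positive; hence no positive steady state exists.

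\textbf{Existence when $\mu_1<0$.} The candidate ordered pair is at hand — a large constant $M$ with $M(\overline\alpha-\underline\alpha)\ge\max_{\overline D}m$ is a supersolution, and $\delta\phi_1$ with $\delta\|\hat\phi_1\|_{L^\infty}\le-\mu_1$ is a subsolution lying below $M$ (this is the only place the sign of $\mu_1$ enters directly) — but the competition term $-\hat u\,u$ is not quasimonotone, since raising $u$ raises $\hat u$ and lowers the reaction, so monotone iteration on $[\delta\phi_1,M]$ is unavailable and \eqref{eq:mainp} admits no comparison principle. Instead I would recast solutions as fixed points: fix $K>0$ large and, on the cone $P$ of non-negative functions in $C(\overline\Omega)$, set $\mathcal A(u):=(K-L)^{-1}\bigl[(K+m-\hat u)\,u\bigr]$, with $\hat u$ truncated at a level beyond its a priori bound so that the coefficient stays nonnegative, $\mathcal A:P\to P$ is compact, and the nontrivial fixed points of $\mathcal A$ are exactly the positive solutions of \eqref{eq:main} (positivity upgraded from nonnegativity by the strong maximum principle, since $-Lu+\lambda u=(\lambda+m-\hat u)u\ge 0$ for $\lambda$ large).

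The degree computation then rests on two estimates. First, an ($\epsilon$-dependent) a priori bound $\|u\|_{L^\infty(\Omega)}\le R_0$ valid uniformly along the homotopy $s\mapsto s\mathcal A$, $s\in[0,1]$: integrating the corresponding equation in $\alpha$ exactly as in Lemma \ref{lem:1} — the homotopy only contributes a term with a favorable sign — gives $\hat u(x_0)\le\max_{\overline D}m$ at a maximum point of $v:=\int_{\underline\alpha}^{\overline\alpha}\alpha u\,d\alpha$; the Harnack inequality then bounds $\hat u$ and $\|u\|_{L^1}$ over $\overline D$, and a second application (the equation for $u$ being uniformly elliptic with bounded coefficients) bounds $\|u\|_{L^\infty}$. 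Consequently $i_P(\mathcal A, B_R\cap P, P)=1$ for $R>R_0$, since $s\mathcal A$ has no fixed point on $\partial(B_R\cap P)$ and deforms to the zero map. Second, the index at the origin: $\mathcal A'(0)=(K-L)^{-1}(K+m)$ is compact and strongly positive, so by Krein--Rutman its spectral radius $r$ is its unique eigenvalue admitting a positive eigenfunction $\psi$; rewriting $\mathcal A'(0)\psi=r\psi$ as $L\psi+\bigl(\tfrac{K+m}{r}-K\bigr)\psi=0$ shows the principal Neumann eigenvalue of $-L-\bigl(\tfrac{K+m}{r}-K\bigr)$ vanishes, and comparing with the definition of $\mu_1$ (the principal eigenvalue of $-L-m$) together with strict monotonicity of the principal eigenvalue in the zeroth-order coefficient forces $r>1$ exactly when $\mu_1<0$; in particular $r\ne 1$, so $i_P(\mathcal A, B_\rho\cap P, P)=0$ for $\rho$ small. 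Additivity of the fixed point index then gives $i_P\bigl(\mathcal A,(B_R\setminus\overline{B_\rho})\cap P, P\bigr)=1-0\ne 0$, producing a fixed point $u^*$ with $\rho<\|u^*\|<R$; thus $u^*\not\equiv 0$, and the strong maximum principle gives $u^*>0$ in $\Omega$.

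The step I expect to be the main obstacle is making the a priori bound genuinely uniform along the homotopy despite the nonlocal, non-monotone term (and checking that the truncation is harmless), together with the spectral dichotomy $r>1\Leftrightarrow\mu_1<0$, which is the sole point where the hypothesis $\mu_1<0$ is converted into a nonzero index at the origin; the remaining ingredients — self-adjointness and Green's identity, the sub/supersolution computations, and Krein--Rutman and elliptic estimates for $K-L$ — are routine. An essentially equivalent alternative would be to show that the semiflow of \eqref{eq:mainp} is point-dissipative and, when $\mu_1<0$, uniformly persistent relative to extinction, and then invoke a fixed point theorem for uniformly persistent semiflows to obtain a coexistence equilibrium; the same two difficulties reappear there as the uniform-weak-repeller estimate and the persistence fixed point theorem.
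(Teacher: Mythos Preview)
Your non-existence argument is essentially identical to the paper's: multiply by $\phi_1$, integrate by parts using self-adjointness of $L$ under the mixed Neumann conditions, and observe that $\int_\Omega(\hat u+\mu_1)u\phi_1>0$ forces a contradiction.

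For existence, both you and the paper use topological degree, but the homotopies differ. You take the scaling homotopy $s\mapsto s\mathcal A$ to get index $1$ on a large ball, and compute the index at the origin via Krein--Rutman applied to the linearization $\mathcal A'(0)$, converting $\mu_1<0$ into spectral radius $r>1$ and hence index $0$. The paper instead homotopes the \emph{nonlinearity}: for $\tau\in[0,1]$ it considers $\alpha\Delta u+\epsilon^2 u_{\alpha\alpha}+(m-\tau\hat u-(1-\tau)u)u=0$, interpolating between the nonlocal problem ($\tau=1$) and the classical local logistic equation ($\tau=0$), which has a unique, linearly stable positive solution and therefore contributes a known nonzero degree. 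The a priori estimate needed is then a two-sided bound on $\|u\|_{L^1(\Omega)}$ uniform in $\tau$: the upper bound comes from integrating the equation and Cauchy--Schwarz, while the lower bound comes from writing $u=v\phi_1$, dividing by $v$, and integrating to extract the quantity $-\mu_1>0$. What the paper's route buys is that it never computes a cone index at $0$ and works entirely with $L^1$ bounds, sidestepping the passage from $\hat u\in L^\infty$ to $u\in L^\infty$ and the truncation bookkeeping you flagged; what your route buys is that it is the textbook Amann--Dancer pattern and makes the role of $\mu_1<0$ completely explicit as the spectral condition $r>1$. Both are sound.
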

\begin{proof}
First, we prove the non-existence result. Suppose $\mu_1 \geq 0$ and let $u$ be a non-negative solution of \eqref{eq:main}. Multiply \eqref{eq:main} by the principal eigenfunction $\phi_1$ of \eqref{eq:pev2}, and integrate by parts, we obtain
$$
\mu_1 \int_D \int_{\underline\al}^{\overline\al} \phi_1^2 \,d\al dx  + \int_D \int_{\underline\al}^{\overline\al} u \hat{u} \phi_1 \,d\al dx =0.
$$
Since $\mu_1 \geq 0$, both terms are non-negative, and both must be identically zero. i.e. $u \equiv 0$.

For the existence result, we  consider, for $\tau \in [0,1]$, the positive solutions of
\begin{equation}\label{eq:general'}
\left\{
\begin{array}{ll}
\alpha {\Delta} u + \epsilon^2 (u)_{\alpha\alpha} + (m(x) - \tau\hat u  - (1-\tau)u) u = 0\quad \text{ in } D \times (\underline\alpha, \bar\alpha),\\
\frac{\partial u}{\partial n} = 0 \quad \text{ on }\partial D \times (\underline\alpha , \overline\alpha), \quad
(u)_\alpha = 0 \quad \text{ in }D \times \{\underline\alpha,\overline\alpha\}.
\end{array}
\right.
\end{equation}
Here we recall that $\hat{u} = \int_{\underline\al}^{\overline\al} u(x,\al)\,d\al$. It remains to show the following claim, from which existence of positive solution to \eqref{eq:main} follows by a standard topological degree argument, as the existence of a unique, linearly stable positive solution to \eqref{eq:general'} when $\tau = 0$ is standard.
\begin{claim}\label{claim:e7}
For some $\delta>0$ independent of $\tau \in [0,1]$, any positive solution $u$ of \eqref{eq:general'} satisfies
$$
\delta < \|u\|_{L^1({\Omega})} < 1/\delta.
$$
\end{claim}
For the upper bound, one can integrate \eqref{eq:general'} over ${\Omega}$ to get
\begin{align*}
 \int_D \hat{u} m \,dx &= \tau  \int_D \hat{u}^2\,dx + (1-\tau) \int_D \int_{\underline\alpha}^{\overline\alpha} u^2 \,d\alpha dx\\
&\geq \left( \tau  + \frac{1-\tau}{\overline\alpha - \underline\alpha} \right) \int_D \hat{u}^2\,dx\\
&\geq c_0 \int_D \hat{u}^2\,dx \\
&\geq \frac{ c_0}{|D|} \left(\int_D \hat{u}\,dx\right)^2 = \frac{ c_0}{|D|} \|u\|_{L^1({\Omega})}^2,
\end{align*}
from which the upper bound follows.

For the lower bound, let $u = v\phi_1$, where $\phi_1>0$ is
the principal eigenfunction corresponding to the principal eigenvalue
${\mu_1} <0$ of \eqref{eq:pev2}. Moreover, if we normalize $\phi_1$ by
$\int_{\Omega} {\phi_1^2}=1$, then $\sup_\Omega \phi_1$ and
$\inf_{\Omega} {\phi_1}$ are  fixed positive  constants independent of $\tau$,
as \eqref{eq:pev2} is independent of $\tau$. The equation for $v$ can be written as
$$
\left\{
\begin{array}{ll}
\alpha\nabla_x \cdot ({\phi_1^2} \nabla_x v) + \epsilon^2 ({\phi_1^2} v_\alpha)_\alpha + {\phi_1^2} v( -\mu_1 - \tau \hat{u}_\epsilon - (1-\tau) u)=0  \quad \text{ in }{\Omega},\\
\frac{\partial v}{\partial n} =0 \quad \text{ on }\partial D \times(\underline\alpha, \overline\alpha),\quad v_\alpha =0 \quad \text{ on }D \times\{\underline\alpha, \overline\alpha\}.
\end{array}\right.
$$
Hence, if we divide by $v$ and integrate by parts, we have
$$
\int_\Omega {\phi_1^2}(\mu_1 + \tau \hat{u} + (1-\tau)u)\,d\alpha dx = \int_{\Omega} {\phi_1^2} \frac{\alpha |\nabla_x v|^2+ \epsilon^2 | v_\alpha|^2}{v^2} \,d\alpha dx >0.
$$
Hence we have
$$
\left(\sup_{\Omega} {\phi_1}\right)^2[\tau (\overline\alpha - \underline\alpha) + (1-\tau)] \|u\|_{L^1({\Omega})} > -\mu_1  \int_{\Omega} {\phi_1^2} \,d\alpha dx = -\mu_1>0.
$$
Since $\mu_1$ and $\sup_{\Omega} {\phi_1}$ are independent of $\tau$, we have $$\|u\|_{L^1({\Omega})} \geq
\frac{-\mu_1}{(\sup_{\Omega} {\phi_1})^2 [\tau (\overline\alpha - \underline\alpha) + (1-\tau)]}.
$$ Since the latter term is bounded  from below uniformly in $\tau \in [0,1]$, the claim is proved.
\end{proof}
\begin{corollary}
If $\int_D m(x)\,dx > 0$, then for any $\epsilon>0$, \eqref{eq:main} has at least one positive solution.
\end{corollary}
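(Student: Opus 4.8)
The plan is to obtain this corollary as an immediate consequence of Theorem~\ref{thm:exist2}: by that theorem it suffices to verify that the hypothesis $\int_D m(x)\,dx>0$ forces the principal eigenvalue $\mu_1$ of \eqref{eq:pev2} to be strictly negative.

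First I would record the variational characterization of $\mu_1$. Rewriting \eqref{eq:pev2} as $-\alpha\Delta_x\phi-\epsilon^2\phi_{\alpha\alpha}-m(x)\phi=\mu\phi$ with the homogeneous Neumann conditions in both $x$ and $\alpha$, and noting that $\alpha$ ranges over the compact interval $[\underline\alpha,\overline\alpha]\subset(0,\infty)$ so that the operator is uniformly elliptic and (since the coefficient $\alpha$ does not depend on $x$) self-adjoint on $L^2(\Omega)$ with form domain $H^1(\Omega)$, one has
\begin{equation*}
\mu_1=\inf_{\substack{\phi\in H^1(\Omega)\\ \int_\Omega\phi^2=1}}\int_\Omega\big[\alpha|\nabla_x\phi|^2+\epsilon^2|\phi_\alpha|^2-m(x)\phi^2\big]\,dx\,d\alpha.
\end{equation*}
Next I would simply insert the normalized constant competitor $\phi\equiv|\Omega|^{-1/2}$, which is admissible precisely because the boundary conditions are of Neumann type in all variables; since $\nabla_x\phi\equiv0$ and $\phi_\alpha\equiv0$ this yields
\begin{equation*}
\mu_1\le-\frac{1}{|\Omega|}\int_\Omega m(x)\,dx\,d\alpha=-\frac{\overline\alpha-\underline\alpha}{|\Omega|}\int_D m(x)\,dx=-\frac{1}{|D|}\int_D m(x)\,dx<0,
\end{equation*}
where we used $|\Omega|=|D|(\overline\alpha-\underline\alpha)$ and the hypothesis $\int_D m>0$ for the strict inequality. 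With $\mu_1<0$ established, Theorem~\ref{thm:exist2} produces at least one positive steady state of \eqref{eq:main}, which is the assertion.

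There is essentially no obstacle here; the computation is one line. The only points that deserve a line of care are fixing the sign convention in the Rayleigh quotient (so that $\mu_1<0$ indeed corresponds to linear instability of the trivial state, as used in Theorem~\ref{thm:exist2}) and observing that the constant function is a legitimate test function only because of the Neumann conditions in both variables. I would also remark that $m$ need not be assumed non-constant for this argument, so the corollary is slightly more general than Theorem~\ref{thm:existence}, whose hypothesis (A) additionally excludes constant $m$.
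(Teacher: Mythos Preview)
Your proof is correct. The overall strategy coincides with the paper's: show $\mu_1<0$ and invoke Theorem~\ref{thm:exist2}. The only difference is in how the bound $\mu_1\le -\tfrac{1}{|D|}\int_D m$ is obtained. You use the Rayleigh quotient with the constant test function, which is the most natural choice given the Neumann conditions; the paper instead divides the eigenvalue equation \eqref{eq:pev2} by $\phi_1$ and integrates, obtaining
\[
\int_\Omega \frac{\alpha|\nabla_x\phi_1|^2+\epsilon^2|\phi_{1,\alpha}|^2}{\phi_1^2}\,d\alpha\,dx+\int_\Omega(m(x)+\mu_1)\,d\alpha\,dx=0,
\]
from which the identical inequality follows. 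Both computations are one-liners and yield exactly the same bound; your variational argument has the mild advantage of not requiring the existence or positivity of $\phi_1$ as input, while the paper's identity gives slightly more (it shows the inequality is strict unless $\phi_1$ is constant). Your remark that non-constancy of $m$ plays no role here is accurate.
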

\begin{proof}
Divide the equation \eqref{eq:pev2} by the principal eigenfunction $\phi_1$ and integrate by parts over ${\Omega}$, we get
$$
\int_{D} \int_{\underline\al}^{\overline\al} \frac{\alpha |\nabla_x \phi_1|^2 + \ep^2 | \phi_{1,\alpha}|^2}{\phi_1^2}\, d\al dx + \int_{D} \int_{\underline\al}^{\overline\al} (m(x) + \mu_1)\, d\al dx = 0.
$$
Hence for all $\ep >0$,
$$
\mu_1 \leq -\frac{1}{|D|} \int_D m(x)\,dx < 0,
$$
and the existence of positive solution of \eqref{eq:main} follows from Theorem \ref{thm:exist2}.
\end{proof}

\section{Eigenvalue problems with diffusion parameter $\alpha$ and weight function $h(x)$}\label{sec:B}

For each $\al >0$ and $h \in L^\infty(D)$, let $\lambda_1 = \lambda_1(\al,h) \in \mathbb{R}$ and $\vphi(x) = \vphi_1(x;\al,h)$ be the normalized principal eigenvalue and principal eigenfunction of the following problem.
\begin{equation}\label{eq:pev1}
\left\{
\begin{array}{ll}
-\alpha \Delta_x \vphi + h \vphi = \lambda \vphi \quad \text{ in }D,\\
\frac{\partial \vphi}{\partial n} = 0 \quad \text{ on }\partial D,\quad \int_D \vphi^2\,dx = 1.
\end{array}
\right.
\end{equation}
We shall state and prove a number of properties of $\lambda_1$ and $\vphi_1$, and its dependence on the parameters $\alpha$ and $h$, some of which is folklore among specialists.

\begin{proposition}\label{prop:E20}
\begin{itemize}
\item[(i)] For each $\alpha >0$ and $h \in L^\infty(D)$, the problem \eqref{eq:pev1} has a principal eigenvalue $\lambda_1$ which is simple, and the corresponding eigenfunction $\varphi_1$ can be chosen positive and uniquely determined by the constraint $\int_D \vphi_1^2\,dx = 1$.

\item[(ii)] For each $p>1$, the mapping $(\al,h) \mapsto (\lambda_1, \vphi_1(\cdot))$ is smooth from $\mathbb{R}_+ \times L^\infty(D)$ to $\mathbb{R} \times W^{2,p}_\mathcal{N}(D)$, where $W^{2,p}_\mathcal{N}(D) = \{ \phi \in W^{2,p}(D): \frac{\partial \phi}{\partial n} = 0 \text{ on }\partial D \}$.

\item[(iii)] If $h \in L^\infty(D)$ is non-constant, then $\frac{\partial \lambda_1}{\partial \al}(\al,h) >0$ for all $\al >0$.


\end{itemize}
\end{proposition}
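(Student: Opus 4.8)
The plan is to handle the three parts in sequence, exploiting the variational structure of \eqref{eq:pev1} together with standard elliptic theory.

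For (i), I would start from the Rayleigh-quotient characterization
\[
\lambda_1(\al,h) = \inf\left\{ \int_D \left(\al |\nabla \vphi|^2 + h\vphi^2\right)dx \ :\ \vphi \in H^1(D),\ \int_D \vphi^2\,dx = 1 \right\}.
\]
Since $h \in L^\infty(D)$ the functional is coercive and weakly lower semicontinuous on $H^1(D)$, so the direct method yields a minimizer $\vphi_1$; because the functional is unchanged under $\vphi \mapsto |\vphi|$ one may take $\vphi_1 \geq 0$, and as $\vphi_1$ solves $-\al\Delta\vphi_1 + h\vphi_1 = \lambda_1\vphi_1$ with $L^\infty$ potential, elliptic regularity gives $\vphi_1 \in W^{2,p}(D)$ and the Harnack inequality gives $\vphi_1 > 0$ on $\overline D$. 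I would deduce simplicity of $\lambda_1$, and that it is the only eigenvalue admitting a one-signed eigenfunction, from the Krein--Rutman theorem applied to the compact, strongly positive resolvent $(-\al\Delta + h + K)^{-1}$ on $C(\overline D)$ with $K$ large (equivalently, two positive eigenfunctions must be proportional, by dividing the equations and integrating by parts). The constraint $\int_D\vphi_1^2 = 1$ then fixes $\vphi_1$ uniquely.

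For (ii), I would apply the implicit function theorem to
\[
F:\mathbb{R}_+ \times L^\infty(D) \times \mathbb{R} \times W^{2,p}_{\mathcal{N}}(D) \to L^p(D)\times\mathbb{R},\qquad
F(\al,h,\lambda,\vphi) = \left( -\al\Delta\vphi + h\vphi - \lambda\vphi,\ \int_D \vphi^2\,dx - 1\right),
\]
which is smooth since it is polynomial in $(\al,\lambda,\vphi)$, affine in $h$, and multiplication $L^\infty(D)\times W^{2,p}(D)\to L^p(D)$ is bounded bilinear. At a zero $(\al_0,h_0,\lambda_1,\vphi_1)$, the partial differential $D_{(\lambda,\vphi)}F$ maps $(\mu,v)$ to $\left(-\al_0\Delta v + (h_0-\lambda_1)v - \mu\vphi_1,\ 2\int_D \vphi_1 v\,dx\right)$; I expect the crux here to be checking that this is an isomorphism. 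Given $(g,c)$, the self-adjoint Neumann operator $-\al_0\Delta + (h_0-\lambda_1)$ has kernel and cokernel both equal to $\mathbb{R}\vphi_1$ by (i), so the Fredholm alternative makes $-\al_0\Delta v + (h_0-\lambda_1)v = g+\mu\vphi_1$ solvable exactly when $\mu = -\left(\int_D g\vphi_1\right)\big/\left(\int_D \vphi_1^2\right)$; this determines $\mu$, determines $v$ up to $\mathbb{R}\vphi_1$, and the scalar constraint then pins down the remaining constant. The IFT then produces a smooth branch of solutions of $F=0$, which by the uniqueness in (i) must coincide with $(\lambda_1(\al,h),\vphi_1(\cdot\,;\al,h))$; elliptic regularity places it in $W^{2,p}_{\mathcal{N}}(D)$.

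For (iii), I would differentiate $-\al\Delta\vphi_1 + h\vphi_1 = \lambda_1\vphi_1$ in $\al$ (justified by (ii)), pair with $\vphi_1$, integrate over $D$, and use the Neumann condition, the self-adjointness of $-\al\Delta + h$, and $\int_D\vphi_1^2 = 1$; the cross terms cancel, leaving the first-variation identity
\[
\frac{\partial\lambda_1}{\partial\al}(\al,h) = \int_D |\nabla\vphi_1(x;\al,h)|^2\,dx \ \geq\ 0.
\]
To upgrade this to a strict inequality, note that if it vanished for some $\al > 0$ then $\vphi_1(\cdot\,;\al,h)$ would be a nonzero constant, and substitution into the eigenvalue equation would force $h \equiv \lambda_1$ on $D$, contradicting the hypothesis that $h$ is non-constant. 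The main difficulty I anticipate is the bookkeeping in (ii) --- verifying the isomorphism property of the linearization and the identification of the abstract branch with the principal eigenpair; the rest should be routine.
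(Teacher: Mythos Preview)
Your proposal is correct and follows essentially the same approach as the paper: (i) is treated via the variational characterization (the paper simply cites \cite{GT}); (ii) is obtained by applying the Implicit Function Theorem to the same map $F$, with the isomorphism of $D_{(\lambda,\vphi)}F$ verified via the Fredholm alternative and simplicity of $\lambda_1$; and (iii) is proved by differentiating the eigenvalue equation in $\al$, pairing with $\vphi_1$, and observing that $\vphi_1$ cannot be constant when $h$ is not. Your write-up is slightly more careful in (i) and in identifying the IFT branch with the principal eigenpair, but there is no substantive difference.
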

\begin{proof}
Part (i) is well-known. See, e.g. \cite[Section 8.12]{GT}. In particular, the principal eigenvalue is given by the variational characterization
\begin{equation}\label{eq:var11}
\lambda_1(\alpha,h) = \inf_{\vphi \in C^1(\overline D)\setminus \{0\}} \frac{\int_D (\alpha |\nabla_x \vphi|^2 + h \vphi^2)\,dx}{ \int_D \vphi^2\,dx}.
\end{equation}
Fix $p>N$ ($N$ being the dimension of $D$). Consider the following mapping $F: W_\mathcal{N}^{2,p}(D)\times \mathbb{R} \times \mathbb{R}_+ \times L^\infty(D) \to L^p(D) \times \mathbb{R}$, given by
$$
F(\vphi, \lambda, \alpha,h) = (\alpha \Delta_x \vphi - h\vphi + \lambda \vphi, \int_D \vphi^2\,dx- 1),
$$
Then for each $\alpha>0$ and $h \in L^\infty(D)$, the principal eigenpair $(\vphi_1(\cdot;\alpha,h), \lambda_1(\alpha,h))$ of \eqref{eq:pev1} satisfies
\begin{equation}\label{eq:FF}
F(\vphi_1(\, \cdot\,; \alpha, h), \lambda_1(\alpha,h), \alpha,h) = (0,0).
\end{equation}
Assertion (ii) follows from the following claim, in view of the Implicit Function Theorem and the smooth dependence of the operator $F$ on $\alpha$ and $h$.
\begin{claim}\label{claim:E11}
For each fixed $\al >0$ and $h \in L^\infty(D)$,
$$
D_{(\vphi,\lambda)} F (\vphi_1(\, \cdot\,; \alpha, h), \lambda_1(\alpha,h), \alpha,h)\,\,\, :\,\,\,  W^{2,p}_\mathcal{N} \times \mathbb{R}\,\, \to \,\,L^p(D) \times \mathbb{R}$$ is a bijection.
\end{claim}
We shall follow the arguments in the proof of \cite[Lemma 2.1]{CC}. Suppose for some $(\Phi, t) \in W^{2,p}_\mathcal{N} \times \mathbb{R}$, $D_{(\vphi, \lambda)}F (\vphi_1(\, \cdot\,; \alpha, h), \lambda_1(\alpha,h), \alpha,h)[\Phi,t] = (0,0)$, i.e.
\begin{equation}\label{eq:E1}
\alpha \Delta_x \Phi - h \Phi + \lambda_1 \Phi +  t \vphi_1 = 0 \quad \text{ in }D,\quad \text{ and }\quad \frac{\partial \Phi}{\partial n} = 0 \quad \text{ on }\partial D,
\end{equation}
and
\begin{equation}\label{eq:E2}
2\int_D \Phi \vphi_1\,dx = 0,
\end{equation}
where  $\lambda_1  = \lambda(\al,h)$ and $\vphi_1 = \vphi(x;\alpha,h)$.  The Fredholm alternative implies that $\int_D t \varphi_1^2\,dx = 0$, i.e. $t = 0$. Hence $\Phi = c \varphi_1$ for some constant $c$ (as $\lambda_1=\lambda_1(\alpha,h)$ is a simple eigenvalue). But then \eqref{eq:E2} implies that $c = 0$. This shows that the kernel of $D_{(\vphi, \lambda)}F (\vphi_1(\, \cdot\,; \alpha, h), \lambda_1(\alpha,h), \alpha,h)$ is trivial. Now let $(f, q) \in L^p(D) \times \mathbb{R}$ be given, we need to solve for $(\Phi, t)$ in
\begin{equation}\label{eq:E3}
\alpha \Delta_x \Phi - h \Phi + \lambda_1 \Phi +  t \vphi_1 = f \quad \text{ in }D,\quad \text{ and }\quad \frac{\partial \Phi}{\partial n} = 0 \quad \text{ on }\partial D,
\end{equation}
and
\begin{equation}\label{eq:E4}
2\int_D \Phi \vphi_1\,dx = q.
\end{equation}
Set $t = (\int_D f \vphi_1\,dx)/(\int_D \vphi_1^2\,dx)$, then $\int_D (f - t\vphi_1) \vphi_1\,dx = 0$, so \eqref{eq:E3} has solution of the form $\Phi = s \vphi_1 + \Phi^\perp$ where $\Phi^\perp \in W^{2,p}_\mathcal{N}(D)$ is unique and satisfies $\int_D \Phi^\perp \vphi_1\,dx = 0$. Finally, if we set $s = q / (2 \int_D \vphi_1^2\,dx)$ then $(\Phi,t)$ solves \eqref{eq:E3} and \eqref{eq:E4}.  This proves Claim \ref{claim:E11}, which implies assertion (ii).

For (iii), we differentiate \eqref{eq:pev1} with respect to $\alpha$, 
\begin{equation}\label{eq:E5}
\left\{\begin{array}{ll}
-\alpha \Delta_x \frac{\partial\vphi_{1}}{\partial \al} + h\frac{\partial\vphi_{1}}{\partial \al} - \lambda_1 \frac{\partial\vphi_{1}}{\partial \al} = \Delta_x \vphi_1 + \frac{\partial\lambda_{1}}{\partial \al} \vphi_1 \quad \text{ in }D,\\
\frac{\partial }{\partial n}\frac{\partial\vphi_{1}}{\partial \al} = 0 \quad \text{ on }\partial D,\quad \text{ and }\quad \int_D \frac{\partial\vphi_{1}}{\partial \al} \vphi_1\,dx = 0.
\end{array}\right.
\end{equation}
Multiply \eqref{eq:E5} by $\vphi_1$ and integrate by parts, we have $\frac{\partial\lambda_{1}}{\partial \al} \int_D \vphi_1^2\,dx = \int_D |\nabla_x \vphi_1|^2\,dx$. Since $h(x)$ is non-constant in $x$, $\vphi_1 = \vphi_1(\,\cdot\,;\al,h)$ is non-constant in $x$ and this implies that $\frac{\partial\lambda_{1}}{\partial \al} >0$. This proves (iii).
\end{proof}

First, we show that $\lambda_1$ and $\vphi_1$ are continuous with respect to the weak topology of $\mathbb{R}_+ \times \cap_{p>1}L^p(D)$.

\begin{lemma}\label{lem:EE}
Let $\lambda_1(\alpha,h)$ and $\vphi_1(\,\cdot\,;\alpha,h)$ be the principal eigenpair of \eqref{eq:pev1}.
\begin{itemize}
\item[(i)] For each $p>1$,  there exists $C'_0 = C'_0(p,M,\underline\al, \overline\al, \partial D)$ such that
$$
|\lambda_1(\al,h)| + \left\|\vphi_1(\,\cdot\,;\al,h)\right\|_{W^{2,p}(D)} \leq C'_0
$$
provided $\al \in [\underline\al, \overline\al]$ and $\|h\|_{L^\infty(D)} \leq M$.

\item[(ii)] If  $\al_j \to \al_0 \in [\underline\al, \overline\al]$, $\sup_{j \geq 0} \|h_j\|_{L^\infty(D)} < +\infty$ and $h_j \rightharpoonup h_0$ in $L^p(D)$ for all $p>1$, then as $j \to \infty$, $\lambda_1(\al_j,h_j) \to \lambda_1(\al_0,h_0)$ and $\vphi_1(\,\cdot\,;\al_j,h_j) \rightharpoonup \vphi_1(\,\cdot\,; \al_0,h_0)$ weakly in $W^{2,p}(D)$ for all $p>1$.
\end{itemize}
\end{lemma}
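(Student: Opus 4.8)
The plan is to prove (i) first, since the uniform $W^{2,p}$ bound is exactly what powers the compactness argument in (ii). For the eigenvalue bound I would use the variational characterization \eqref{eq:var11}: taking $\vphi\equiv 1$ as test function gives $\lambda_1(\al,h)\le \frac{1}{|D|}\int_D h\,dx\le M$, and dropping the nonnegative gradient term gives $\lambda_1(\al,h)\ge \inf_D h\ge -M$, so $|\lambda_1(\al,h)|\le M$. For the eigenfunction I would observe that $\vphi_1=\vphi_1(\cdot;\al,h)$ solves $-\al\Delta_x\vphi_1+(h-\lambda_1)\vphi_1=0$ in $D$ with $\partial\vphi_1/\partial n=0$, where the zeroth-order coefficient is bounded by $2M$ in $L^\infty(D)$ and $\al\in[\underline\al,\overline\al]$; the Harnack inequality up to the boundary (using the Neumann condition, as elsewhere in the paper) then yields $\sup_D\vphi_1\le C_1\inf_D\vphi_1$ with $C_1=C_1(M,\underline\al,\overline\al,\partial D)$, and since $\int_D\vphi_1^2\,dx=1$ forces $\inf_D\vphi_1\le|D|^{-1/2}$ we obtain $\|\vphi_1\|_{L^\infty(D)}\le C_1|D|^{-1/2}$. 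With this $L^\infty$ bound the right-hand side $(\lambda_1-h)\vphi_1$ is bounded in $L^\infty(D)\subset L^p(D)$ for every finite $p$, so the elliptic $L^p$ estimate gives $\|\vphi_1\|_{W^{2,p}(D)}\le C(\|\vphi_1\|_{L^p(D)}+\|(\lambda_1-h)\vphi_1\|_{L^p(D)})\le C_0'$, which is (i).

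For (ii) I would argue by compactness. Given any subsequence, part (i) makes $\{\lambda_1(\al_j,h_j)\}$ bounded in $\mathbb R$ and $\{\vphi_1(\cdot;\al_j,h_j)\}$ bounded in $W^{2,p}(D)$ for every $p$; passing to a further subsequence (not relabelled) I may assume $\lambda_1(\al_j,h_j)\to\lambda^*$ and $\vphi_1(\cdot;\al_j,h_j)\rightharpoonup\vphi^*$ weakly in $W^{2,p}(D)$ for all $p$, hence strongly in $L^2(D)$ and, choosing $p>N$, strongly in $C(\overline D)$ by the Rellich--Kondrachov theorem. In particular $\vphi^*\ge 0$ and $\|\vphi^*\|_{L^2(D)}=1$, so $\vphi^*\not\equiv 0$. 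I would then pass to the limit in the weak formulation
\[
\int_D\Big(\al_j\,\nabla_x\vphi_1(\cdot;\al_j,h_j)\cdot\nabla_x\zeta+h_j\,\vphi_1(\cdot;\al_j,h_j)\,\zeta\Big)\,dx=\lambda_1(\al_j,h_j)\int_D\vphi_1(\cdot;\al_j,h_j)\,\zeta\,dx,
\]
valid for all $\zeta\in C^\infty(\overline D)$. The only term requiring care is $\int_D h_j\,\vphi_1(\cdot;\al_j,h_j)\,\zeta\,dx$, which converges to $\int_D h_0\vphi^*\zeta\,dx$ via the weak--strong pairing $h_j\rightharpoonup h_0$ in $L^2(D)$ against $\vphi_1(\cdot;\al_j,h_j)\zeta\to\vphi^*\zeta$ in $L^2(D)$; the remaining terms converge by strong convergence. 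Hence $\vphi^*$ is a nonnegative nontrivial weak solution — and by elliptic regularity a classical solution — of $-\al_0\Delta_x\vphi^*+h_0\vphi^*=\lambda^*\vphi^*$ with $\partial\vphi^*/\partial n=0$. By the strong maximum principle $\vphi^*>0$ in $\overline D$, and a positive eigenfunction necessarily corresponds to the principal eigenvalue, so $\lambda^*=\lambda_1(\al_0,h_0)$ and, by the normalization $\int_D(\vphi^*)^2=1$ and the simplicity in Proposition \ref{prop:E20}(i), $\vphi^*=\vphi_1(\cdot;\al_0,h_0)$. Since this limit is independent of the subsequence chosen, the full sequences converge, giving (ii).

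The only genuine obstacle is the uniform $L^\infty$ estimate for $\vphi_1$ in (i); once it is secured via Harnack, the $W^{2,p}$ bound and the entire argument for (ii) — compactness, passage to the limit using the weak--strong pairing for the $h_j$ term, and identification of the limit as the principal eigenpair — are routine.
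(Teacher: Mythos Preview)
Your proof is correct and follows the same architecture as the paper's: variational bounds on $\lambda_1$, elliptic $L^p$ estimates for $\vphi_1$, then compactness plus identification of the limit for (ii). The one technical difference is how you obtain the a priori bound on $\vphi_1$ in (i): you use a boundary Harnack inequality to pass from the $L^2$ normalization to an $L^\infty$ bound and then feed this into the $L^p$ estimate, whereas the paper applies the elliptic $L^p$ estimate directly and absorbs $\|\vphi_1\|_{L^p(D)}$ into $\|\vphi_1\|_{W^{2,p}(D)}$ via the interpolation inequality
\[
\|\vphi_1\|_{L^p(D)}\le \tfrac12\|\vphi_1\|_{W^{2,p}(D)}+C\|\vphi_1\|_{L^2(D)},
\]
using only $\|\vphi_1\|_{L^2(D)}=1$. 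The paper's route is a little leaner (no need to invoke Harnack up to the Neumann boundary), while yours gives a bit more information (a uniform pointwise bound). Part (ii) is essentially identical to the paper's argument; your explicit handling of the $h_j$ term via the weak--strong pairing is exactly the point the paper leaves implicit when it says ``pass to the weak limit.''
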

\begin{proof}
By \eqref{eq:var11},  $\lambda_{1}:= \lambda_1(\alpha, h) $ forms a bounded sequence in $[- \|h\|_{L^\infty(D)} , \|h\|_{L^\infty(D)}]$. The $L^p$ estimate (for $p>N$) applied to \eqref{eq:pev1} and interpolation inequality together imply
$$
\|\vphi_1\|_{W^{2,p}(D)} \leq C \| \vphi_1 \|_{L^p(D)} \leq \frac{1}{2} \|\vphi_1\|_{W^{2,p}(D)}  + C  \| \vphi_1 \|_{L^2(D)},
$$
where $C$ is a generic constant, depending on $\|h\|_{L^\infty(D)}, \underline\al, \overline\al$ and the domain $D$ that changes from line to line. This proves (i).

For (ii), let $\alpha_j \to \alpha_0 \in [\underline\al, \overline\al]$ and  $h_j$ be a uniformly bounded sequence in $L^\infty(D)$ and $h_j \rightharpoonup h_0$ weakly in $L^p(D)$. Denote $\lambda_{1,j} = \lambda_1(\alpha_j, h_j)$ and $\vphi_{1,j} = \vphi_1(\,\cdot\,;\al_j,h_j)$.
By assertion (i),  there are subsequences $\lambda_{1,j'}$ and $\vphi_{1,j'}$ such that
$\lambda_{1}(\alpha_j,h_j) \to \tilde\lambda $ and  $ \vphi_{1}(\,\cdot\,; \al_j,h_j) \rightharpoonup \tilde\vphi$ weakly in $W^{2,p}(D)$, for some $\tilde\lambda \in \mathbb{R}$ and $\tilde\vphi \in W^{2,p}(D)$. Take $\alpha = \alpha_{j'}$, $h = h_{j'}$ in \eqref{eq:pev1}, and pass to the weak limit $j' \to \infty$, we deduce
$$
\left\{
\begin{array}{ll}
-\alpha_0 \Delta_x \tilde\vphi + h_0 \tilde\vphi = \tilde\lambda \tilde\vphi \quad \text{ in }D,\\
\frac{\partial \tilde\vphi}{\partial n} = 0 \quad \text{ on }\partial D,\quad \int_D \tilde\vphi^2\,dx = 1.
\end{array}
\right.
$$
Hence $(\tilde\vphi, \tilde\lambda)$ is an eigenpair of \eqref{eq:pev1} when $\alpha = \alpha_0$, $h = h_0$ and such that $\tilde\vphi \geq 0$. Moreover, $\tilde\vphi$ is non-trivial, as $\int_D \tilde\vphi^2\,dx = 1$. By uniqueness of principal eigenpair, it follows that $\tilde\lambda = \lambda_1(\alpha_0, h_0)$ and $\tilde\vphi = \vphi_1(\,\cdot\, ; \al_0,h_0)$. Since the limit is independent of subsequence, we deduce that the full sequence $\lambda_1(\al_j,h_j) \to \lambda(\al_0, h_0)$ and $ \vphi_1(\,\cdot\,;\al_j,h_j) \rightharpoonup \vphi_1(\,\cdot\,; \al_0, h_0)$ weakly in $W^{2,p}(D)$. This proves the assertion (ii).\end{proof}

Next, we show the following uniform estimate of $(D_{(\vphi,\lambda)}F)^{-1}$.
\begin{lemma}
\label{claim:E6} 
There exists $C_2= C_2(M, \underline\al, \overline\al,D)$ such that for any $\alpha \in [\underline\al, \overline\al]$ and $\|h \|_{L^\infty(D)}\leq M$, if
$$
D_{(\vphi, \lambda)}F (\vphi_1(\,\cdot\,;\al,h), \lambda_1(\alpha,h), \alpha, h) [ \Phi, t] = (f, q),
$$
i.e. \eqref{eq:E3} and \eqref{eq:E4} hold with $\lambda_1 = \lambda(\al,h)$ and $\vphi_1 = \vphi_1(x;\al,h)$, then
\begin{equation}\label{eq:E7}
|t| + \|\Phi\|_{W^{2,p}(D)} \leq C_2 (|q| + \|f\|_{L^p(D)}).
\end{equation}
\end{lemma}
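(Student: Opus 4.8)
The plan is to prove this uniform invertibility estimate by a compactness--contradiction argument, leveraging Lemma \ref{lem:EE} for the stability of the principal eigenpair under weak limits of the parameters and Claim \ref{claim:E11} for the triviality of $\ker D_{(\vphi,\lambda)}F$ at each individual point. So suppose the conclusion fails. Then there are sequences $\al_j \in [\underline\al,\overline\al]$, $h_j \in L^\infty(D)$ with $\|h_j\|_{L^\infty(D)} \leq M$, together with $(\Phi_j,t_j) \in W^{2,p}_\mathcal{N}(D)\times\mathbb{R}$ and $(f_j,q_j)\in L^p(D)\times\mathbb{R}$ satisfying \eqref{eq:E3}--\eqref{eq:E4} with $\lambda_1 = \lambda_1(\al_j,h_j)$ and $\vphi_1 = \vphi_1(\,\cdot\,;\al_j,h_j)$, normalized so that $|t_j| + \|\Phi_j\|_{W^{2,p}(D)} = 1$ while $|q_j| + \|f_j\|_{L^p(D)} \to 0$.

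Next I would extract convergent subsequences. Since $[\underline\al,\overline\al]$ is compact and $\{h_j\}$ is bounded in $L^\infty(D)$, we may assume $\al_j \to \al_0 \in [\underline\al,\overline\al]$ and $h_j \rightharpoonup h_0$ weakly in $L^p(D)$ for every $p>1$, with $\|h_0\|_{L^\infty(D)}\leq M$. By Lemma \ref{lem:EE}(ii), $\lambda_1(\al_j,h_j) \to \lambda_1(\al_0,h_0)$ and $\vphi_1(\,\cdot\,;\al_j,h_j) \rightharpoonup \vphi_1(\,\cdot\,;\al_0,h_0)$ weakly in $W^{2,p}(D)$, hence strongly in $L^p(D)$ (and, for $p>N$, in $C(\overline D)$) by Rellich--Kondrachov. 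Since $|t_j|\leq 1$ and $\|\Phi_j\|_{W^{2,p}(D)}\leq 1$, we may also assume $t_j \to t_0$ and $\Phi_j \rightharpoonup \Phi_0$ weakly in $W^{2,p}(D)$, hence $\Phi_j \to \Phi_0$ strongly in $L^p(D)$. Using $\|h_j\|_{L^\infty(D)}\leq M$ together with the strong $L^p$ convergence of $\Phi_j$ and the weak convergence of $h_j$, one checks $h_j\Phi_j \rightharpoonup h_0\Phi_0$ weakly in $L^p(D)$. Passing to the weak limit in \eqref{eq:E3}--\eqref{eq:E4} (recall $f_j\to 0$ in $L^p(D)$ and $q_j\to 0$) then shows that $(\Phi_0,t_0)$ lies in the kernel of $D_{(\vphi,\lambda)}F(\vphi_1(\,\cdot\,;\al_0,h_0),\lambda_1(\al_0,h_0),\al_0,h_0)$, which is trivial by Claim \ref{claim:E11}; hence $\Phi_0 = 0$ and $t_0 = 0$.

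Finally I would close the loop with the elliptic $L^p$ estimate. Rewriting \eqref{eq:E3} as $\al_j \Delta_x \Phi_j = f_j + \big(h_j - \lambda_1(\al_j,h_j)\big)\Phi_j - t_j\,\vphi_1(\,\cdot\,;\al_j,h_j)$ with homogeneous Neumann data, and dividing through by $\al_j \geq \underline\al > 0$, the standard $L^p$ estimate for the Neumann Laplacian (constant depending only on $p$, $\underline\al$, $\overline\al$ and $D$) yields
\[
\|\Phi_j\|_{W^{2,p}(D)} \leq C\left( \|f_j\|_{L^p(D)} + \big(M + |\lambda_1(\al_j,h_j)|\big)\|\Phi_j\|_{L^p(D)} + |t_j|\,\|\vphi_1(\,\cdot\,;\al_j,h_j)\|_{L^p(D)} \right).
\]
Each term on the right tends to $0$: $f_j\to 0$ in $L^p(D)$; $\Phi_j\to 0$ in $L^p(D)$ while $|\lambda_1(\al_j,h_j)|$ stays bounded by Lemma \ref{lem:EE}(i); and $t_j\to 0$ while $\|\vphi_1(\,\cdot\,;\al_j,h_j)\|_{L^p(D)}$ stays bounded, again by Lemma \ref{lem:EE}(i). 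Hence $\|\Phi_j\|_{W^{2,p}(D)}\to 0$, and together with $t_j\to 0$ this forces $|t_j| + \|\Phi_j\|_{W^{2,p}(D)}\to 0$, contradicting the normalization $|t_j| + \|\Phi_j\|_{W^{2,p}(D)} = 1$. This proves the lemma.

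The main obstacle is the weak convergence of $h_j$: one cannot pass to the limit in the product $h_j\Phi_j$ without first upgrading $\Phi_j\rightharpoonup\Phi_0$ to strong convergence in $L^p(D)$ via the compact embedding $W^{2,p}(D)\hookrightarrow L^p(D)$, and one must be sure the elliptic estimate constant is uniform, which is why we divide by $\al_j$ and use $\al_j\in[\underline\al,\overline\al]$ with $\underline\al>0$. An alternative, more hands-on route is to decompose $\Phi = s\vphi_1 + \Phi^\perp$ with $\int_D \Phi^\perp\vphi_1\,dx = 0$, read off $t = \int_D f\vphi_1\,dx$ and $s = q/2$ directly from \eqref{eq:E3}--\eqref{eq:E4}, and bound $\Phi^\perp$ using a uniform lower bound on the spectral gap $\lambda_2(\al,h) - \lambda_1(\al,h)$ followed by $L^p$ bootstrapping; but establishing that uniform spectral gap again requires a compactness argument of the same flavor as above, so the contradiction argument is cleaner.
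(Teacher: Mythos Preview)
Your proof is correct and follows the same compactness--contradiction strategy as the paper, with a minor but pleasant difference in execution. The paper first uses the explicit solution formulas $t_j = \int_D f_j \vphi_{1,j}\,dx$ and $\Phi_j = \Phi_j^\perp + \tfrac{q_j}{2}\vphi_{1,j}$ (which you flag at the end as the ``hands-on route''), reduces to showing $\|\Phi_j^\perp\|_{W^{2,p}}$ stays bounded, normalizes $\Phi_j^\perp$ by its $L^\infty$ norm, and passes to the limit to obtain a nontrivial element of the $\lambda_1$-eigenspace orthogonal to $\vphi_{1,0}$---a contradiction to simplicity. You instead normalize $(\Phi_j,t_j)$ directly, pass to the weak limit to land in $\ker D_{(\vphi,\lambda)}F$, invoke Claim~\ref{claim:E11} wholesale, and then use the elliptic $L^p$ estimate to upgrade $\Phi_j \to 0$ from $L^p$ to $W^{2,p}$. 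Your route is slightly cleaner in that it never unpacks the Fredholm structure of \eqref{eq:E3}--\eqref{eq:E4}; the paper's route has the small advantage that the bounds on $t_j$ and the $\vphi_{1,j}$-component of $\Phi_j$ are read off immediately and only the orthogonal part needs compactness. Both rely on exactly the same ingredients: Lemma~\ref{lem:EE} for stability of the eigenpair, simplicity of $\lambda_1$, and uniform elliptic $L^p$ estimates.
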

\begin{proof}
Let $M>0$ be given. Suppose to the contrary that there are $\alpha_j \in [\underline\al, \overline\al]$, $h_j$, $\Phi_j$, $t_j$, $q_j$, $f_j$ such that
\begin{equation}\label{eq:E9}
\sup_j \|h_j\|_{L^\infty(D)}  \leq M,\quad |t_j| + \|\Phi_j\|_{W^{2,p}(D)} \to \infty, \quad |q_j| + \|f_j\|_{L^p(D)} \leq 1.
\end{equation}
Without loss of generality, we may assume
$\alpha_j \to \alpha_0 \in  [\underline\al, \overline\al]$ and for some
$h_0 \in L^\infty(D)$, $h_j \rightharpoonup h_0$ weakly in $L^p(D)$.
Denote
$$
\lambda_{1,j} = \lambda_{1}(\al_j,h_j),\quad \text{ and }\quad \vphi_{1,j} = \vphi_1(\cdot;\al_j,h_j)\quad \text{ for }j \in \mathbb{N} \cup\{0\}.
$$
The above arguments ensure that
$$\Phi_j = \Phi_j^\perp + q_j/(2\int_D \vphi^2_{1,j} \,dx) \vphi_{1,j},\quad
\text{  and }\quad t_j = \int_D f_j \vphi_{1,j}\,dx / \int_D \vphi^2_{1,j}\,dx$$
where $\Phi_j^\perp$ is the unique solution of \eqref{eq:E3} subject to the constraint
$\int_D \Phi_j^\perp \vphi_{1,j}\,dx = 0$.
 By the normalization $\int_D \vphi^2_{1,j} \,dx = 1$, we have
\begin{equation}\label{eq:E8}
\Phi_j = \Phi_j^\perp + \frac{q_j}{2} \vphi_{1,j}\quad \text{ and }\quad t_j = \int_D f_j \vphi_{1,j}\,dx .
\end{equation}
Since we have shown that $|\lambda_{1,j}|$ and $\|\varphi_{1,j}\|_{W^{2,p}(D)}$ remain bounded uniformly in $j$,
\eqref{eq:E9} and \eqref{eq:E8} imply that  $\|\Phi^\perp_j\|_{W^{2,p}(D)} \to \infty$. Apply $L^p$ estimate to the equation of $\Phi^\perp_j$, which is
\begin{equation}\label{eq:E10}
\left\{\begin{array}{ll}
\alpha_j \Delta_x \Phi^\perp_j- h_j \Phi^\perp_j + \lambda_{1,j} \Phi^\perp_j  = f_j -  (\int_D f_j \vphi_{1,j}\,dx) \vphi_{1,j} \quad \text{ in }D,\\
\frac{\partial \Phi^\perp_j}{\partial n } = 0 \quad \text{ on }\partial D,\quad \text{ and }\quad \int_D \Phi^\perp_j \vphi_{1,j}\,dx = 0.
\end{array}\right.
\end{equation}
Using the boundedness of $\vphi_{1,j}$ in $W^{2,p}(D)$ and hence in $L^\infty(D)$, we have
\begin{align*}
\|\Phi^\perp_j\|_{W^{2,p}(D)} &\leq C \left[\|\Phi^\perp_j\|_{L^p(D)} + \|f_j -  \left(\int_D f_j \vphi_{1,j}\,dx\right) \vphi_{1,j} \|_{L^p(D)}  \right]\\
&\leq C (\|\Phi^\perp_j\|_{L^\infty(D)} + \|f_j \|_{L^p(D)}  ).
\end{align*}
Hence we must have $\|\Phi^\perp_j\|_{L^\infty(D)} \to \infty$ as $j \to \infty$. Define $\tilde\Phi_j := \Phi^\perp_j / \| \Phi^\perp_j \|_{L^\infty(D)}$, then $\tilde\Phi_j$ satisfies
$$
\left\{\begin{array}{ll}
\alpha_j \Delta_x \tilde\Phi_j - h_j \tilde\Phi_j  + \lambda_{1,j} \tilde\Phi_j  = \tilde{f}_j \quad \text{ in }D,\\
\frac{\partial \tilde\Phi_j }{\partial n } = 0 \quad \text{ on }\partial D,\quad \int_D \tilde\Phi_j  \vphi_{1,j}\,dx = 0, \quad  \text{ and }\quad \sup_D \tilde\Phi_j = 1,
\end{array}\right.
$$
where $\tilde{f}_j = [f_j -  (\int_D f_j \vphi_{1,j}\,dx) \vphi_{1,j} ]/\| \Phi^\perp_j \|_{L^\infty(D)}$ converges to zero in $L^p(D)$ as $j \to \infty$. By $L^p$ estimates,
$\tilde\Phi_j$ is bounded uniformly in $W^{2,p}(D)$. Hence, there is a subsequence $\tilde\Phi_{j'}$ that  converges, weakly in $W^{2,p}(D)$ and strongly in $C^1(\overline D)$, to some function $\tilde\Phi_0$. By normalization $\sup_D\tilde\Phi_0 = \lim_{j'} \left(\sup_D \tilde\Phi_j\right) = 1$. Moreover, $\tilde\Phi_0$ satisfies
(using Lemma \ref{lem:EE}(ii))
$$
\left\{\begin{array}{ll}
\alpha_0 \Delta_x \tilde\Phi_0 - h_0 \tilde\Phi_0  + \lambda_{1}(\al_0,h_0) \tilde\Phi_0  = 0 \quad \text{ in }D,\\
\frac{\partial \tilde\Phi_0}{\partial n } = 0 \quad \text{ on }\partial D,\quad \text{ and }\quad \int_D \tilde\Phi_0  \vphi_{1,0}\,dx = 0.
\end{array}\right.
$$
Since $\tilde\Phi_0$ is non-negative, Proposition \ref{prop:E20}(i) implies
 $\tilde\Phi_0  = c \vphi_1(\,\cdot\,;\al_0,h_0)= c \vphi_{1,0}$ but the integral constraint implies that $c=0$. i.e. $\tilde\Phi_0 = 0$. This is a contradiction to $\sup_D\tilde\Phi_0=1$. This proves \eqref{eq:E7}.
%
%
\end{proof}

\begin{proposition}\label{prop:E30} Let $\lambda_1(\alpha,h)$ and $\vphi_1(\,\cdot\,;\alpha,h)$ be the principal eigenpair of \eqref{eq:pev1}.
\begin{itemize}

\item[(i)] For each $k$, there exists $C'_k = C'_k(M, \underline\al, \overline\al,D)$ such that
\begin{equation}\label{eq:E12}
\sum_{j=0}^k \left|\frac{\partial^j}{\partial \al^j} \lambda_1(\alpha,h)\right| + \left\|\frac{\partial^j}{\partial \al^j} \vphi_1(\,\cdot\,;\alpha,h)\right\|_{W^{2,p}(D)} \leq C'_k
\end{equation}
provided $\alpha \in [\underline\al, \overline\al]$ and $\|h\|_{L^\infty(D)}\leq M$.

\item[(ii)] If $\sup_{j \geq 0}  \|h_j\|_{L^\infty(D)} < +\infty$ and $h_j \rightharpoonup h_0$ in
$L^p(D)$ for all $p >1$, then for each $k \geq 0$,
$$
\frac{\partial^k}{\partial\al^k}\lambda_1(\cdot,h_j) \to \frac{\partial^k}{\partial\al^k}\lambda_1(\cdot,h_0) \quad \text{ in }C_{loc}([0,\infty))
\quad \text{ as }j \to \infty.
$$
Moreover, given $k \geq 0$, $p>1$, and sequence $\alpha_j \to \alpha_0>0$,
$$
\frac{\partial^k}{\partial \al^k} \vphi_1(\,\cdot\,;\alpha_j, h_j) \rightharpoonup \frac{\partial^k}{\partial \al^k} \vphi_1(\,\cdot\,;\alpha_0, h_0)
\quad \text{  weakly in }W^{2,p}(D)\text{ as }j \to \infty.
$$
\end{itemize}
\end{proposition}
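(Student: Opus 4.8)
The plan is to prove both assertions by induction on $k$, the two inputs being the zeroth--order statements (Lemma~\ref{lem:EE}) and, crucially, the uniform invertibility of the linearized operator $D_{(\vphi,\lambda)}F$ furnished by Lemma~\ref{claim:E6}. Throughout one may as well take $p>N$, since a bound in $W^{2,p'}(D)$ with $p'>p$ forces the corresponding bound in $W^{2,p}(D)$; then $W^{2,p}(D)$ embeds compactly in $C^1(\overline D)$, which will be used repeatedly.

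For (i), the base case $k=0$ is Lemma~\ref{lem:EE}(i). For the inductive step I would differentiate the identity \eqref{eq:FF}, $F(\vphi_1(\al,h),\lambda_1(\al,h),\al,h)=(0,0)$, $k$ times with respect to $\al$; this is legitimate because $(\al,h)\mapsto(\vphi_1,\lambda_1)$ is smooth into $\mathbb R\times W^{2,p}_{\mathcal N}(D)$ by Proposition~\ref{prop:E20}(ii). The structural point is that, jointly in $(\vphi,\lambda,\al)$, $F$ is a polynomial of total degree $\le2$ — the only non--affine monomials being $\lambda\vphi$ and $\int_D\vphi^2$ — so $D^3F\equiv0$, and the chain rule produces, for each $k\ge1$,
\[
D_{(\vphi,\lambda)}F\big(\vphi_1,\lambda_1,\al,h\big)\big[\partial_\al^k\vphi_1,\ \partial_\al^k\lambda_1\big]=-G_k ,
\]
where $G_k\in L^p(D)\times\mathbb R$ is an explicit finite sum: products $(\partial_\al^i\lambda_1)(\partial_\al^j\vphi_1)$ with $i,j<k$; one term $\Delta_x(\partial_\al^{k-1}\vphi_1)$ (for $k=1$ this reads $\Delta_x\vphi_1$); and, in the scalar slot, integrals $\int_D(\partial_\al^i\vphi_1)(\partial_\al^j\vphi_1)$ with $i,j<k$. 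By the induction hypothesis together with $W^{2,p}(D)\hookrightarrow L^\infty(D)$, every such term — in particular $\Delta_x(\partial_\al^{k-1}\vphi_1)\in L^p(D)$ — is bounded uniformly over $\al\in[\underline\al,\overline\al]$ and $\|h\|_{L^\infty(D)}\le M$; hence so is $\|G_k\|_{L^p(D)\times\mathbb R}$. Lemma~\ref{claim:E6} then yields $|\partial_\al^k\lambda_1|+\|\partial_\al^k\vphi_1\|_{W^{2,p}(D)}\le C_2\|G_k\|$, closing the induction and giving \eqref{eq:E12}. (Since $\underline\al,\overline\al$ enter Lemmas~\ref{lem:EE} and~\ref{claim:E6} only through the constants, the bound persists on every compact $\al$--interval in $(0,\infty)$.)

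For (ii) I would again induct on $k$, with $k=0$ being Lemma~\ref{lem:EE}(ii). The eigenvalue statement comes for free at each step: by part~(i) the functions $\partial_\al^{k-1}\lambda_1(\cdot,h_j)$ are bounded in $C^2$ on every compact $\al$--interval and, by the induction hypothesis, converge there to $\partial_\al^{k-1}\lambda_1(\cdot,h_0)$ in $C^0$, so Arzel\`a--Ascoli upgrades this to $C^1$--convergence, i.e. $\partial_\al^k\lambda_1(\cdot,h_j)\to\partial_\al^k\lambda_1(\cdot,h_0)$ in $C_{loc}([0,\infty))$. For the eigenfunction statement fix $\al_j\to\al_0>0$; by part~(i) the sequence $\partial_\al^k\vphi_1(\cdot;\al_j,h_j)$ is bounded in $W^{2,p}(D)$, so along a subsequence it converges weakly in $W^{2,p}(D)$ to some $\Psi$. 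I would then pass to the limit in the identity $D_{(\vphi,\lambda)}F[\partial_\al^k\vphi_1,\partial_\al^k\lambda_1]=-G_k$ written at $(\al_j,h_j)$: $\Delta_x$ turns the weak $W^{2,p}$--convergence of $\partial_\al^k\vphi_1(\cdot;\al_j,h_j)$ into weak $L^p$--convergence, the compact embedding $W^{2,p}(D)\hookrightarrow C^1(\overline D)$ turns all the lower--order convergences (of $\vphi_{1}$ and its $\al$--derivatives, available by the induction hypothesis) into strong $C^1$--convergences, and $h_j\rightharpoonup h_0$ in $L^p$ while $\partial_\al^k\vphi_1(\cdot;\al_j,h_j)\to\Psi$ strongly in $C^1$, so $h_j\,\partial_\al^k\vphi_1(\cdot;\al_j,h_j)\rightharpoonup h_0\Psi$; likewise $G_k(\al_j,h_j)\rightharpoonup G_k(\al_0,h_0)$ and, by the eigenvalue statement just proved, $\partial_\al^k\lambda_1(\al_j,h_j)\to\partial_\al^k\lambda_1(\al_0,h_0)$. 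One concludes that $(\Psi,\partial_\al^k\lambda_1(\al_0,h_0))$ solves the linear system $D_{(\vphi,\lambda)}F(\vphi_1,\lambda_1,\al_0,h_0)[\cdot,\cdot]=-G_k(\al_0,h_0)$. But that system has a unique solution, because $D_{(\vphi,\lambda)}F$ at $(\al_0,h_0)$ is a bijection (Proposition~\ref{prop:E20}, noting $h_0\in L^\infty(D)$ since it is a weak-$*$ limit of a bounded sequence) with inverse of norm $\le C_2$ (Lemma~\ref{claim:E6}), and that unique solution is exactly $(\partial_\al^k\vphi_1(\cdot;\al_0,h_0),\partial_\al^k\lambda_1(\al_0,h_0))$ — which satisfies the same identity, being the $k$--th $\al$--derivative of \eqref{eq:FF} at $(\al_0,h_0)$. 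Hence $\Psi=\partial_\al^k\vphi_1(\cdot;\al_0,h_0)$; as every subsequential weak limit coincides, the whole sequence converges weakly in $W^{2,p}(D)$.

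The hard part is already behind us, in Lemma~\ref{claim:E6}: once the linearized operator is known to be uniformly boundedly invertible, repeatedly differentiating \eqref{eq:FF} mechanically reduces each order to applying that inverse to data controlled at lower orders. The points that will require the most care are purely bookkeeping: verifying that the inhomogeneities $G_k$ produced by the chain rule really involve only $\al$--derivatives of order strictly less than $k$ (so the induction is not circular) and lie in $L^p(D)\times\mathbb R$ with uniform bounds, and justifying the term--by--term passage to the limit in (ii) via the compact embedding $W^{2,p}(D)\hookrightarrow C^1(\overline D)$.
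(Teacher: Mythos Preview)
Your proposal is correct and follows essentially the same approach as the paper: induct on $k$, differentiate the identity $F(\vphi_1,\lambda_1,\al,h)=(0,0)$ to obtain $D_{(\vphi,\lambda)}F[\partial_\al^k\vphi_1,\partial_\al^k\lambda_1]=-G_k$ with $G_k$ of lower order, and invoke the uniform invertibility from Lemma~\ref{claim:E6} for (i), then pass to the limit in this identity for (ii). Your handling of (ii) for the eigenvalue via Arzel\`a--Ascoli (bounding $\partial_\al^{k-1}\lambda_1(\cdot,h_j)$ in $C^2$ and upgrading $C^0$--convergence to $C^1$) is a clean shortcut compared to the paper, which identifies the limit of $\partial_\al^k\lambda_1$ through the same linear--system argument used for the eigenfunction; and your inclusion of the cross terms $\int_D(\partial_\al^i\vphi_1)(\partial_\al^j\vphi_1)$ in the scalar slot of $G_k$ is in fact more accurate than the paper's displayed $\mathfrak{F}_K$, which sets that component to $0$.
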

\begin{proof}
Assertions (i) and (ii) for the case $k=0$ are exactly Lemma \ref{lem:EE}.
We first prove assertion (i) for  $k=1$,
differentiate the relation \eqref{eq:FF} with respect to $\alpha$, 
\begin{equation}\label{eq:E14}
D_{(\vphi,\lambda)}F \left[ \frac{\partial}{\partial \alpha}\vphi_1(\,\cdot\,;\al,h), \frac{\partial}{\partial \alpha}\lambda_1(\al,h)\right] = - D_\alpha F,
\end{equation}
where the partial derivatives of $F$ are evaluated at $(\varphi_1(\,\cdot\,;\alpha,h), \lambda_1(\alpha,h), \alpha, h )$. By  \eqref{claim:E11}, we may write
$$
(\vphi_1', \lambda_1') = (D_{(\vphi,\lambda)}F)^{-1} [- D_\alpha F] = (D_{(\vphi,\lambda)}F)^{-1} (-\Delta_x \vphi_1, 0)
$$
and deduce by Lemma \ref{claim:E6} that $$|\lambda'_1| + \|\vphi'_1\|_{W^{2,p}(D)} \leq  C_2 \| \Delta_x \vphi_1\|_{L^p(D)}  \leq C_2 \|\vphi_1\|_{W^{2,p}(D)} \leq C.$$
i.e. assertion (i) holds for $k=1$.
We argue inductively for $k > 1$. Suppose (i)  holds for $k = K-1$. We can write
\begin{equation}\label{eq:E17}
D_{(\vphi,\lambda)} F  \left[\frac{\partial^K}{\partial \al^K} \vphi_1(\,\cdot\,;\alpha,h),
\frac{\partial^K}{\partial \al^K} \lambda_1(\alpha,h) \right] = \mathfrak{F}_{K}(\al,h)
\end{equation}
where
\begin{equation}\label{eq:frakf}
\mathfrak{F}_K(\al,h):= \left( - K \frac{\partial^{K-1}}{\partial \al^{K-1}} (-\Delta_x\vphi_1) - \sum_{k=1}^{K-1} \left(\begin{array}{cc} K \\ k  \end{array}\right) \frac{\partial^{k}}{\partial \al^{k}}\lambda_1 \frac{\partial^{K-k}}{\partial \al^{K-k}}\vphi_1 \,\, ,\,\, 0\right)
\end{equation}
By  the form of $\mathfrak{F}_K$, we can deduce the following result.
%
\begin{claim}\label{claim:mathfrakkk}
$\|\mathfrak{F}_K(\al,h)\|_{L^\infty(D) }\leq C\sum_{k=0}^{K-1}\left(  \left|\frac{\partial^{k}}{\partial \al^{k}}\lambda_1 \right| + \left\|\frac{\partial^{k}}{\partial \al^{k}}\vphi_1\right\|_{W^{2,p}(D)} \right).$ 
\end{claim}
By the induction assumption (i.e. (i) holds for $k=K-1$) we have $\|\mathfrak{F}_K\|_{L^p(D)} \leq C(M, \underline\al, \overline\al,D).$
Hence we may apply Claim \ref{claim:E6} to \eqref{eq:E17} to conclude the assertion (i) for the case $K$. This induction argument proves (i).

By Lemma \ref{lem:EE}(ii), it remains to prove assertion (ii) for case $k \geq 1$.
Let $\alpha_j \to \alpha_0 \in [\underline\al, \overline\al]$ and  $h_j$ be a uniformly bounded sequence in $L^\infty(D)$ and $h_j \rightharpoonup h_0$ weakly in $L^p(D)$. Denote $\lambda_{1,j} = \lambda_1(\alpha_j, h_j)$ and $\vphi_{1,j} = \vphi_1(\,\cdot\,;\al_j,h_j)$.
By assertion (i),  there are subsequences $\lambda_{1,j'}$ and $\vphi_{1,j'}$ such that for all $k\geq 0$, $\frac{\partial^k}{\partial \al^k} \lambda_{1}(\alpha_{j'},h_{j'}) \to \tilde\lambda_k $ and  $\frac{\partial^k}{\partial \al^k} \vphi_{1}(\,\cdot\,; \al_{j'},h_{j'}) \rightharpoonup \tilde\vphi_k$ weakly in $W^{2,p}(D)$, for some $\tilde\lambda_k \in \mathbb{R}$ and $\tilde\vphi_k \in W^{2,p}(D)$.
Passing to the limit in \eqref{eq:E14}, we deduce that
\begin{equation}\label{eq:E15}
D_{(\vphi,\lambda)}F \left[ \tilde\vphi_1,\tilde\lambda_1\right]= - D_\alpha F,
\end{equation}
where the partial derivatives of $F$ are evaluated at $(\vphi_1(\,\cdot\,;\alpha_0, h_0), \lambda_1(\alpha_0, h_0), \alpha_0, h_0) $. Since we also have
\begin{equation}\label{eq:E16}
D_{(\vphi,\lambda)}F\left[ \frac{\partial}{\partial \alpha} \vphi_1(\,\cdot\,; \alpha_0,h_0) , \frac{\partial}{\partial \alpha} \lambda_1(\al_0,h_0)\right] = - D_\alpha F,
\end{equation}
where the partial derivatives of $F$ are evaluated at $(\vphi_1(\,\cdot\,;\alpha_0, h_0), \lambda_1(\alpha_0, h_0), \alpha_0, h_0) $, we may invert $D_{(\vphi,\lambda)}F$ in both \eqref{eq:E15} and \eqref{eq:E16}, and conclude that
$$
\tilde\vphi_1= \frac{\partial}{\partial \alpha} \vphi_1(\,\cdot\,; \alpha_0,h_0) \quad\text{ and }\quad \tilde\lambda_1= \frac{\partial}{\partial \alpha} \lambda_1(\al_0,h_0).$$
Since the limit is determined independent of the subsequence, we conclude assertion (ii) for the case $k=1$.

Again, we may argue inductively for $k > 1$. Suppose (ii) is proved for $k=1,...,K-1$.  The following can be easily observed from \eqref{eq:frakf}.
\begin{claim}\label{claim:E40}
If assertion (ii) holds for $k=1,...,K-1$, then
$$
\mathfrak{F}_K(\alpha_j,h_j) \rightharpoonup \mathfrak{F}_K(\alpha_0, h_0)
$$
weakly in $L^p(D)$, where $\mathfrak{F}_K$ is defined in \eqref{eq:frakf}.
\end{claim}
Based on Claim \ref{claim:E40}, and the  assertion (ii) for the cases $k=1,...,K-1$, we may pass to the limit in \eqref{eq:E17}. Together with the uniform boundedness of $[D_{(\vphi,\lambda)}F]^{-1}: L^p(D) \to W^{2,p}(D)$ (Lemma \ref{claim:E6}), this implies $\frac{\partial^K}{\partial \al^K}\lambda_{1}(\al_j,h_j) \to \frac{\partial^K}{\partial \al^K}\lambda_1(\al_0,h_0)$ and
$$
\frac{\partial^K}{\partial \al^K} \vphi_1(\cdot;\al_j,h_j) \rightharpoonup \vphi_1(\cdot;\al_0,h_0)\text{ in }W^{2,p}(D).
$$
Thus assertion (ii) follows by induction on $k$.
\end{proof}

\section{Liouville Theorem for Positive Harmonic Functions in Cylinder Domain}\label{sec:C}
We give a proof of the Liouville-type theorem for positive harmonic functions in cylinder domains, since we cannot locate a proper reference for this result.
\begin{proposition}\label{prop:A1}
Let $k \in \mathbb{N}$, $D$ be a bounded smooth domain in $\mathbb{R}^{N}$ and  $u$ be a non-negative harmonic function on $\Omega:= D \times \mathbb{R}^k \subset \mathbb{R}^{N+k}$, so that $\frac{\partial u}{\partial n} = 0$ on $\partial D \times \mathbb{R}^k$. Then $u$ is necessarily a constant.
\end{proposition}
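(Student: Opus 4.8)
The plan is to integrate out the bounded cross‑section $D$ and reduce everything to one‑dimensional‑in‑spirit facts on $\mathbb{R}^k$. Write points of $\Omega$ as $(x,y)$ with $x\in D$, $y\in\mathbb{R}^k$, and set $\bar u(y):=\frac{1}{|D|}\int_D u(x,y)\,dx$. Since $u$ is harmonic and $\partial u/\partial n=0$ on $\partial D\times\mathbb{R}^k$, differentiating under the integral and using the divergence theorem gives
\[
\Delta_y\bar u(y)=\frac{1}{|D|}\int_D\Delta_y u\,dx=-\frac{1}{|D|}\int_D\Delta_x u\,dx=-\frac{1}{|D|}\int_{\partial D}\frac{\partial u}{\partial n}\,dS=0,
\]
so $\bar u$ is a nonnegative harmonic function on $\mathbb{R}^k$. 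By the classical Liouville theorem (together with the strong maximum principle, which handles the case where $\bar u$ vanishes somewhere), $\bar u\equiv a$ for some constant $a\ge 0$; if $a=0$ then $u\equiv 0$ and we are done, so from now on assume $a>0$.

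Next I would record a uniform slicewise Harnack inequality: there is $C=C(D,k)$ such that $\sup_{x\in\bar D}u(x,y)\le C\inf_{x\in\bar D}u(x,y)$ for every $y$. This follows from the interior Harnack inequality together with the boundary Harnack inequality for the Neumann problem (obtained by flattening $\partial D$ and reflecting evenly), applied on the slab $D\times B_1(y)$ and chained over the compact connected set $\bar D\times\{y\}$; translation invariance in $y$ makes $C$ independent of $y$ and of $u$. In particular $0\le u\le Ca$ on all of $\Omega$, so $u$ is bounded, and hence $w:=u-a$ is a bounded harmonic function on $\Omega$ that satisfies the Neumann condition and has $\int_D w(x,y)\,dx=0$ for every $y$.

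The final step is a Saint‑Venant–type energy estimate on the slices. Put $E(y):=\int_D w(x,y)^2\,dx$. By elliptic regularity for the Neumann problem on the smooth domain $D$ one has $u\in C^\infty(\bar D\times\mathbb{R}^k)$ with locally uniform derivative bounds, so one may differentiate twice under the integral and compute, using $\Delta_y w=-\Delta_x w$ and $\partial w/\partial n=0$,
\[
\Delta_y E(y)=2\int_D\big(|\nabla_y w|^2+|\nabla_x w|^2\big)\,dx\ \ge\ 2\int_D|\nabla_x w|^2\,dx\ \ge\ \frac{2}{C_P}\int_D w^2\,dx=\frac{2}{C_P}E(y),
\]
where $C_P$ is the Poincaré constant of $D$ and we used that $w(\cdot,y)$ has mean zero on $D$. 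Thus $E\ge 0$ is bounded and satisfies $\Delta_y E\ge\mu E$ on $\mathbb{R}^k$ with $\mu:=2/C_P>0$. A standard comparison argument then forces $E\equiv 0$: letting $\psi_R$ be the (radial) solution of $\Delta\psi=\mu\psi$ in $B_R(0)$ with $\psi=\sup_{\mathbb{R}^k}E$ on $\partial B_R$, the maximum principle applied to $E-\psi_R$ gives $E\le\psi_R$ in $B_R$, while $\psi_R(0)\to0$ as $R\to\infty$; hence $E(0)=0$, and by translation $E\equiv 0$. Therefore $w(\cdot,y)\equiv 0$ for each $y$, i.e. $u\equiv a$, which is the desired conclusion.

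The main obstacle is not any single step but making the routine ingredients uniform: the Harnack constant must be independent of the slice $y$ (handled by translation invariance in $\mathbb{R}^k$) and of $u$, and the differentiation under the integral sign together with the boundary manipulations requires the up‑to‑the‑boundary $C^\infty$ regularity and local estimates for the Neumann problem on the smooth domain $D$. An essentially equivalent route, if one prefers, is to expand $u(x,y)=\sum_j c_j(y)\phi_j(x)$ in the Neumann eigenfunctions $-\Delta\phi_j=\mu_j\phi_j$ of $D$ (with $0=\mu_0<\mu_1\le\cdots$): then $c_0$ is positive harmonic on $\mathbb{R}^k$ hence constant, and for $j\ge1$ each $c_j$ is bounded (by the same slicewise Harnack bound, since $\sum_j c_j(y)^2=\|u(\cdot,y)\|_{L^2(D)}^2\le C\,c_0(y)^2$) and solves $\Delta_y c_j=\mu_j c_j$ with $\mu_j>0$, so the same comparison argument yields $c_j\equiv0$; hence $u$ is independent of $x$ and constant.
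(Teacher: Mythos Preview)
Your argument is correct. The opening moves coincide with the paper's: integrate over the cross-section to obtain a nonnegative harmonic function $\bar u$ on $\mathbb{R}^k$, apply Liouville to get $\bar u\equiv a$, and invoke a uniform (translation-invariant) Harnack inequality on slabs $D\times B(y')$. Where you diverge is in the endgame. The paper finishes in one line: normalize so that $\inf_\Omega u=0$; since on every slice some point has $u=a$, Harnack gives $a\le C\inf_{D\times B_2(y')}u$ for each $y'$, and taking the infimum over $y'$ forces $a=0$, whence $u\equiv 0$. No Poincar\'e inequality, no energy function, no modified-Bessel comparison is needed.

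Your Saint-Venant/energy route (and the equivalent Neumann-eigenfunction expansion) is a genuinely different and valid finish: it trades the paper's two-line Harnack trick for a spectral-gap argument that would generalize more readily (e.g.\ to operators of the form $\psi^{-2}\nabla_x\!\cdot(\psi^2\nabla_x\,\cdot\,)+\Delta_y$, as in the weighted Liouville result the paper also uses). The cost is extra machinery---regularity to differentiate under the integral, the Poincar\'e constant, and the subsolution comparison on $\mathbb{R}^k$---none of which the paper's proof requires.
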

\begin{proof}
Let $x \in D$, $y \in \mathbb{R}^k$ and let
$u(x,y)$ be a non-negative harmonic function on $\Omega = D \times \mathbb{R}^k$, subject to Neumann boundary condition on $\partial D \times \mathbb{R}^k$. By subtracting a positive constant from $u$, we may assume that $\inf_\Omega u = 0$.

 Harnack inequality says that there is a constant $C>1$ such that for all  $y' \in \mathbb{R}^k$, we have
$$
\sup_{x \in D, |y-y'| <2} u \leq C \inf_{x \in D, |y-y'| <2} u.
$$

Define $v(y) = \frac{1}{|D|}\int_D u(x',y)\,dy$, then $v$ is a harmonic function on $\mathbb{R}^k$ and must be equal to a non-negative constant $v_0$. Hence for each $y' \in \mathbb{R}^k$, there exists $x' \in \bar{D}$ such that $u(x',y') = v_0$. It follows that for each $y' \in \mathbb{R}^k$,
$$
v_0 \leq C\inf_{x \in D, |y-y'| <2}u(x,y).
$$
Taking infimum in $y' \in \mathbb{R}^k$, it follows that from $\inf_\Omega u = 0$ that $v_0=0$. Hence,
$$
\frac{1}{|D|} \int_D u(x,y)\,dx = v(y) =  v_0 = 0
$$
for all $y\in \mathbb{R}^k$. i.e. $u \equiv 0$ in $\Omega$.
\end{proof}
\bibliographystyle{amsplain}

\end{document}